\newtheorem{theorem}{Theorem}[section]
\newtheorem{lemma}[theorem]{Lemma}
\newtheorem{proposition}[theorem]{Proposition}
\newtheorem{conjecture}[theorem]{Conjecture}
\newtheorem{problem}[theorem]{Problem}
\theoremstyle{definition}
\newtheorem{definition}[theorem]{Definition}
\newcommand{\N}{\mathbb N}
\newcommand{\Z}{\mathbb Z}
\newcommand{\R}{\mathbb R}
\newcommand{\Q}{\mathbb Q}
\newcommand{\F}{\mathbb F}
\newcommand{\red}{{\text{\rm red}}}
\newcommand{\BF}{\text{\rm BF}}
 \DeclareMathOperator{\ord}{ord}
 \DeclareMathOperator{\Int}{Int}
 \DeclareMathOperator{\supp}{supp}
\newcommand{\DP}{\negthinspace : \negthinspace}
\renewcommand{\P}{\mathbb P}
\renewcommand{\t}{\, | \,}
\renewcommand{\time}{\negthinspace \times \negthinspace}
\numberwithin{equation}{section}
\begin{document}

\title[Systems of sets of lengths:  transfer Krull monoids versus weakly Krull monoids]{Systems of sets of lengths: \\ transfer Krull monoids versus weakly Krull monoids}

\address{University of Graz,  \\
Institute for Mathematics  \\
Heinrichstra{\ss}e 36\\
8010 Graz, Austria}

\email{alfred.geroldinger@uni-graz.at, qinghai.zhong@uni-graz.at}

\address{Universit{\'e} Paris 13 \\ Sorbonne Paris Cit{\'e} \\ LAGA, CNRS, UMR 7539,  Universit{\'e} Paris 8\\ F-93430, Villetaneuse, France \\ and \\ Laboratoire Analyse, G{\'e}om{\'e}trie et Applications (LAGA, UMR 7539) \\ COMUE  Universit{\'e} Paris Lumi{\`e}res \\  Universit{\'e} Paris 8, CNRS \\  93526 Saint-Denis cedex, France} \email{schmid@math.univ-paris13.fr}

\author{Alfred Geroldinger  and Wolfgang A. Schmid and Qinghai Zhong}

\thanks{This work was supported by
the Austrian Science Fund FWF, Project Number P28864-N35, and the ANR project Caesar, project number ANR-12-BS01-0011}

\keywords{transfer Krull monoids, weakly Krull monoids, sets of lengths, zero-sum sequences}

\subjclass[2010]{11B30, 11R27, 13A05, 13F05, 16H10, 16U30, 20M13}

\begin{abstract}
Transfer Krull monoids are monoids which allow a weak transfer homomorphism to a commutative Krull monoid, and hence the system of sets of lengths of a transfer Krull monoid coincides with that of the associated commutative Krull monoid. We unveil a couple of new features of the system of sets of lengths of transfer Krull monoids over finite abelian groups $G$, and we provide a complete description of the system for all groups $G$ having Davenport constant $\mathsf D (G)=5$ (these are the smallest groups for which no such descriptions were known so far). Under reasonable algebraic finiteness assumptions, sets of lengths of transfer Krull monoids and of weakly Krull monoids satisfy the Structure Theorem for Sets of Lengths. In spite of this common feature we demonstrate that systems of sets of lengths for a variety of classes of weakly Krull monoids are different from the system of sets of lengths of any transfer Krull monoid.
\end{abstract}

\maketitle

\medskip
\section{Introduction} \label{1}
\medskip

By an atomic monoid we mean a cancellative semigroup with unit element such that every nonunit can be written as a finite product of irreducible elements. Let $H$ be an atomic monoid. If $a \in H$ is a nonunit and $a=u_1 \cdot \ldots \cdot u_k$ is a factorization of $a$ into $k$ irreducible elements, then $k$ is called a factorization length and the set $\mathsf L (a) \subset \N$ of all possible factorization lengths  is called the set of lengths of $a$. Then $\mathcal L (H) = \{ \mathsf L (a) \mid a \in H \}$ is the system of sets of lengths of $H$. Under a variety of noetherian conditions on $H$ (e.g., $H$ is the monoid of nonzero elements of a commutative noetherian domain) all sets of lengths are finite. Furthermore, if there is some element $a \in H$ with $|\mathsf L (a)| > 1$, then $|\mathsf L(a^N)|>N$ for all $N \in \N$. Sets of lengths (together with invariants controlling their structure, such as elasticities and sets of distances) are a well-studied means of describing the arithmetic structure of monoids (\cite{Ge-HK06a, C-F-G-O16}).

Let $H$ be a transfer Krull monoid. Then, by definition, there is a weak transfer homomorphism $\theta \colon H \to \mathcal B (G_0)$, where $\mathcal B (G_0)$ denotes the monoid of zero-sum sequences over a subset $G_0$ of an abelian group, and hence $\mathcal L (H) = \mathcal L \big( \mathcal B (G_0) \big)$. A special emphasis has always been on the case where $G_0$ is a finite abelian group. Thus let $G$ be a finite abelian group and we use the abbreviation $\mathcal L (G) = \mathcal L \big( \mathcal B (G) \big)$. It is well-known that sets of lengths in $\mathcal L (G)$ are highly structured (Proposition \ref{3.2}), and the standing conjecture is that the system $\mathcal L (G)$ is characteristic for the group $G$. More precisely, if $G'$ is a finite abelian group such that $\mathcal L (G)= \mathcal L (G')$, then $G$ and $G'$ are isomorphic (apart from two well-known trivial pairings; see  Conjecture \ref{3.4}). This conjecture holds true, among others,  for groups $G$ having rank at most two, and its proof uses deep results from additive combinatorics which are not available for general groups. Thus there is a need for studying $\mathcal L (G)$ with a new approach. In Section \ref{3}, we unveil a couple of properties of the system $\mathcal L (G)$ which are first steps on a new way towards Conjecture \ref{3.4}.

In spite of all abstract work on systems $\mathcal L (G)$, they have been written down explicitly only for groups $G$ having Davenport constant $\mathsf D (G)\le 4$, and this is not difficult to do (recall that  a group $G$ has Davenport constant $\mathsf D(G)\le 4$ if and only if either $|G|\le 4$ or $G$ is an elementary $2$-group of rank three). In Section \ref{4} we determine the systems  $\mathcal L (G)$ for all groups $G$ having Davenport constant $\mathsf D (G)=5$.

Commutative Krull monoids are the classic examples of transfer Krull monoids. In recent years a wide range of monoids and domains has been found which are transfer Krull but which are not commutative Krull monoids. Thus the question arose which monoids $H$ have systems $\mathcal L (H)$ which are  different from systems of sets of lengths of transfer Krull monoids. Commutative $v$-noetherian weakly Krull monoids and domains are the best investigated class of monoids beyond commutative Krull monoids (numerical monoids as well as one-dimensional noetherian domains are $v$-noetherian weakly Krull). Clearly, weakly Krull monoids can be half-factorial and half-factorial monoids are transfer Krull monoids. Similarly, it can happen both for weakly Krull monoids as well as for transfer Krull monoids that all sets of lengths are arithmetical progressions with difference $1$. Apart from such extremal cases,  we show  in Section \ref{5} that systems of sets of lengths of a variety of classes of weakly Krull monoids are different from the system of sets of lengths of any transfer Krull monoid.

\medskip
\section{Background on sets of lengths} \label{2}
\medskip

We denote by $\N$ the set of positive integers, and for real numbers $a, b \in \R$, we denote by $[a,b] = \{ x \in \Z \mid a \le x \le b\}$ the discrete interval between $a$ and $b$, and by an interval we always mean a finite discrete interval of integers.

Let $A, B \subset \Z$ be subsets of the integers. Then $A+B = \{a+b \mid a \in A, b \in B\}$ is the sumset of $A$ and $B$. We set $-A= \{-a \mid a \in A\}$ and for an integer $m \in \Z$, $m+A = \{m\}+A$ is the shift of $A$ by $m$. For $m \in \N$, we denote by $mA = A+ \ldots + A$ the $m$-fold subset of $A$ and by $m \cdot A = \{ma \mid a \in A\}$ the dilation of $A$ by $m$. If $A \subset \N$, we denote by $\rho (A) = \sup A/\min A \in \Q_{\ge 1} \cup \{\infty\}$ the elasticity of $A$ and we set $\rho ( \{0\})=1$.
A positive integer $d \in \N$ is called a distance of $A$ if there are $a, b \in A$ with $b-a=d$ and the interval $[a,b]$ contains no further elements of $A$. We denote by $\Delta (A)$ the set of distances of $A$. Clearly, $\Delta (A)=\emptyset$ if and only if $|A|\le 1$, and $A$ is an arithmetical progression if and only if $|\Delta (A)|\le 1$.

Let $G$ be an additive abelian group. An (ordered) family $(e_i)_{i \in I}$ of elements of $G$ is said to be \
{\it independent} \ if $e_i \ne 0$ for all $i \in I$ and, for every
family $(m_i)_{i \in I} \in \Z^{(I)}$,
\[
\sum_{i \in I} m_ie_i =0 \qquad \text{implies} \qquad m_i e_i =0 \quad \text{for all} \quad i \in I\,.
\]
A family $(e_i)_{i \in I}$ is called a {\it basis} for $G$ if   $e_i \ne 0$ for all $i \in I$ and
 $G = \bigoplus_{i \in I} \langle e_i \rangle$. A subset $G_0 \subset G$ is said to be independent if the tuple $(g)_{g \in G_0}$ is independent. For every prime $p \in \P$, we denote by $\mathsf r_p (G)$ the $p$-rank of $G$.

\medskip
\noindent
{\bf Sets of Lengths.}
 We say that a semigroup $S$ is cancellative if for all elements $a,b,c \in S$, the equation $ab=ac$ implies $b=c$ and the equation $ba=ca$ implies $b=c$. Throughout this manuscript, a monoid means a cancellative semigroup with unit element, and we will use multiplicative notation.

 Let $H$ be a monoid.
 An element $a \in H$ is said to be invertible if there exists an element $a'\in H$ such that $aa'=a'a=1$. The set of invertible elements of $H$ will be denoted by $H^{\times}$, and we say that $H$ is reduced if $H^{\times}=\{1\}$. For a set $P$, we denote by $\mathcal F (P)$ the free abelian monoid with basis $P$. Then every $a \in \mathcal F (P)$ has a unique representation in the form
\[
a = \prod_{p \in P} p^{\mathsf v_p (a)} \,,
\]
where $\mathsf v_p  \colon \mathcal F (P) \to \N_0$ denotes the $p$-adic exponent.

An element $a \in H$ is called {\it irreducible} (or an {\it atom}) if $a \notin H^{\times}$ and if, for all $u, v \in H$, $a = u v$ implies that $u \in H^{\times}$ or $v \in H^{\times}$. We denote by $\mathcal A (H)$ the set of atoms of $H$. The monoid $H$ is said to be {\it atomic} if every $a \in H \setminus H^{\times}$ is a product of finitely many atoms of $H$.
If $a \in H$ and $a = u_1 \cdot \ldots \cdot
u_k$, where $k \in \mathbb N$ and $u_1, \ldots, u_k \in \mathcal A
(H)$, then we say that $k$ is the {\it length} of the factorization.
For $a \in H \setminus H^{\times}$, we call
\[
\mathsf L_H (a) = \mathsf L (a) = \{ k \in \mathbb N \mid a \
\text{has a factorization of length} \ k \} \subset \N
\]
the {\it set of lengths} of $a$. For convenience, we set $\mathsf L
(a) = \{0\}$ for all $a \in H^{\times}$. By definition, $H$ is
atomic if and only if $\mathsf L (a) \ne \emptyset$ for all $a \in
H$. Furthermore, $\mathsf L (a) = \{1\}$ if and only if $a \in \mathcal A (H)$ if and only if $1 \in \mathsf L (a)$. If $a, b \in H$, then  $\mathsf L (a) + \mathsf L (b) \subset \mathsf L (a b)$. We call
\[
\mathcal L (H) = \{ \mathsf L (a) \mid a \in H \}
\]
the {\it system of sets of lengths} of $H$.  We say that $H$ is {\it half-factorial} if $|L|=1$ for every $L \in \mathcal L (H)$. If $H$ is atomic, then $H$ is either half-factorial or for every $N \in \N$ there is an element $a_N \in H$ such that $|\mathsf L (a_N)| >N$ (\cite[Lemma 2.1]{Ge16c}). We say that $H$ is a BF-monoid if it is atomic and all sets of lengths are finite. Let
\[
\Delta (H) = \bigcup_{L \in \mathcal L (H)} \Delta (L) \ \subset \ \N
\]
denote the {\it set of  distances} of $H$, and if $\Delta (H)\ne \emptyset$, then $\min \Delta (H) = \gcd \Delta (H)$. We denote by $\Delta_1 (H)$ the set of all $d \in \N$ with the following property:
\begin{itemize}
\item[] For every $k \in \N$ there exists an $L \in \mathcal L (H)$ of the form $L = L' \cup \{y, y+d, \ldots, y+kd\} \cup L''$ where $y \in \N$ and $L', L'' \subset \Z$ with $\max L' < y$ and $y+kd < \min L''$.
\end{itemize}
By definition, $\Delta_1 (H)$ is a subset of $\Delta (H)$.
For every $k \in \N$ we define the $k$th {\it elasticity} of $H$. If $H=H^{\times}$, then we set $\rho_k (H) = k$, and if $H \ne H^{\times}$, then
\[
\rho_k (H) = \sup \{\sup L \mid k \in L \in \mathcal L (H) \} \in \N \cup \{\infty\} \,.
\]
The invariant
\[
\rho (H) = \sup \{ \rho (L) \mid L \in \mathcal L (H) \} = \lim_{k \to \infty}\frac{\rho_k(H)}{k} \in \R_{\ge 1} \cup \{\infty\}
\]
is called the {\it elasticity} of $H$ (see \cite[Proposition 2.4]{Ge16c}).
Sets of lengths of all monoids, which are in the focus of the present paper, are highly structured (see Proposition \ref{3.2} and Theorems \ref{5.5} -- \ref{5.8}). To summarize the relevant concepts, let $d \in \N$, \ $M \in \N_0$ \ and \ $\{0,d\} \subset \mathcal D
\subset [0,d]$. A subset $L \subset \Z$ is called an {\it almost
arithmetical multiprogression} \ ({\rm AAMP} \ for
      short) \ with \ {\it difference} \ $d$, \ {\it period} \ $\mathcal D$,
      \  and \ {\it bound} \ $M$, \ if
\[
L = y + (L' \cup L^* \cup L'') \, \subset \, y + \mathcal D + d \Z
\]
where \ $y \in \mathbb Z$ is a shift parameter,
\begin{itemize}
\item  $L^*$ is finite nonempty with $\min L^* = 0$ and $L^* =
       (\mathcal D + d \Z) \cap [0, \max L^*]$, and

\item  $L' \subset [-M, -1]$ \ and \ $L'' \subset \max L^* + [1,M]$.
\end{itemize}
We say that \ {\it the Structure Theorem for Sets of Lengths} \ holds
for a monoid $H$  if $H$ is atomic and there exist some $M \in
\N_0$ and a finite nonempty set $\Delta \subset \N$ such that
every $L \in \mathcal L(H)$ is an {\rm AAMP} with some difference $d
\in \Delta$ and bound $M$.

\medskip
\noindent
{\bf Monoids of zero-sum sequences.} We discuss a monoid having a combinatorial flavor whose universal role in the study of sets of lengths will become evident at the beginning of the next section. Let $G$ be an additive abelian group and $G_0 \subset G$ a subset. Then $\langle G_0 \rangle$ denotes the subgroup generated by $G_0$, and we set $G_0^{\bullet}= G_0 \setminus \{0\}$. In additive combinatorics, a {\it sequence} (over $G_0$) means a finite sequence of terms from $G_0$ where repetition is allowed and the order of the elements is disregarded, and (as usual) we consider sequences as elements of the free abelian monoid with basis $G_0$. Let
\[
S = g_1 \cdot \ldots \cdot g_{\ell} = \prod_{g \in G_0} g^{\mathsf v_g (S)} \in \mathcal F (G_0)
\]
be a sequence over $G_0$. We set $-S = (-g_1) \cdot \ldots \cdot (-g_{\ell})$, and  we call
\begin{itemize}
\item $ \supp (S)  = \{g \in G \mid \mathsf v_g (S) > 0 \} \subset G \ \text{the \ {\it support} \ of \ $S$}\,$,
\item $ |S|  = \ell = \sum_{g \in G} \mathsf v_g (S) \in \mathbb N_0 \ \text{the \ {\it length} \ of \ $S$} \,$,
\item $ \sigma (S)  = \sum_{i = 1}^l g_i \ \text{the \ {\it sum} \ of \
$S$} \,$,
\item $\Sigma (S)  = \Big\{ \sum_{i \in I} g_i \mid \emptyset \ne I \subset [1,\ell] \Big\} \ \text{ the \ {\it set of
subsequence sums} \ of \ $S$} \,$,
\item $\mathsf k (S)  = \sum_{i = 1}^l \frac{1}{\ord(g_i)} \ \text{the \ {\it cross number} \ of \
$S$} \,$.
\end{itemize}
The sequence $S$ is said to be
\begin{itemize}
\item {\it zero-sum free} \ if \ $0 \notin \Sigma (S)$,

\item a {\it zero-sum sequence} \ if \ $\sigma (S) = 0$,

\item a {\it minimal zero-sum sequence} \ if it is a nontrivial zero-sum
      sequence and every proper  subsequence is zero-sum free.
\end{itemize}
The set of zero-sum sequences $\mathcal B (G_0) = \{S \in \mathcal F (G_0) \mid \sigma (S)=0\} \subset \mathcal F (G_0)$ is a submonoid, and the set of minimal zero-sum sequences is the set of atoms of $\mathcal B (G_0)$.
For any arithmetical invariant $*(H)$ defined for a monoid $H$, we write $*(G_0)$ instead of $*(\mathcal B (G_0))$. In particular, $\mathcal A (G_0) = \mathcal A (\mathcal B (G_0))$ is the set of atoms of $\mathcal B (G_0)$, $\mathcal L (G_0)=\mathcal L (B(G_0))$ is the system of sets of lengths of $\mathcal B (G_0)$, and so on. Furthermore, we say that $G_0$ is half-factorial if the monoid $\mathcal B (G_0)$ is half-factorial. We denote by
\[
\mathsf D (G_0) = \sup \{ |S| \mid S \in \mathcal A (G_0) \} \in \N_0 \cup \{\infty\}
\]
the {\it Davenport constant} of $G_0$. If $G_0$ is finite, then $\mathsf D (G_0)$ is finite. Suppose that $G$ is finite, say $G \cong C_{n_1} \oplus \ldots \oplus C_{n_r}$, with $r \in \N_0$, $1 < n_1 \t \ldots \t n_r$, then $r = \mathsf r (G)$ is the rank of $G$, and we have
\begin{equation} \label{inequality-Davenport}
1 + \sum_{i=1}^r (n_i-1) \le \mathsf D (G) \le |G| \,.
\end{equation}
If $G$ is a $p$-group or $\mathsf r (G) \le 2$, then $1 + \sum_{i=1}^r (n_i-1) = \mathsf D (G)$. Suppose that $|G| \ge 3$. We will use that $\Delta (G)$ is an interval  with $\min \Delta (G)=1$ (\cite{Ge-Yu12b}), and that, for all $k \in \N$,
\begin{equation} \label{inequality-elasticity}
\begin{aligned}
\rho_{2k} (G) & =  k \mathsf D (G),  \\
 k \mathsf D (G) +1 \le \rho_{2k+1} (G)  & \le k \mathsf D (G) + \lfloor \mathsf D (G)/2 \rfloor,  \quad \text{and} \\
 \rho (G) & = \mathsf D (G)/2 \,,
\end{aligned}
\end{equation}
(\cite[Section 6.3]{Ge-HK06a}).

\medskip
\section{Sets of lengths of transfer Krull monoids} \label{3}
\medskip

Weak transfer homomorphisms play a critical role in factorization theory, in particular in all studies of sets of lengths.  We refer to \cite{Ge-HK06a} for a detailed presentation of transfer homomorphisms in the commutative setting. Weak transfer homomorphisms (as defined below) were introduced in \cite[Definition 2.1]{Ba-Sm15} and transfer Krull monoids were introduced in \cite{Ge16c}.

\medskip
\begin{definition} \label{3.1}
Let $H$ be a monoid.
\begin{enumerate}
\item A monoid homomorphism $\theta \colon H \to B$ to an atomic monoid $B$ is called a {\it weak transfer homomorphism} if it has the following two properties:
\begin{description}
\item[{\bf \phantom{W}(T1)}] $B = B^{\times} \theta (H) B^{\times}$ and $\theta^{-1} (B^{\times})=H^{\times}$.
\item[{\bf (WT2)}] If $a \in H$, $n \in \N$, $v_1, \ldots, v_n \in \mathcal A (B)$ and $\theta (a) = v_1 \cdot \ldots \cdot v_n$, then there exist $u_1, \ldots, u_n \in \mathcal A (H)$ and a permutation $\tau \in \mathfrak S_n$ such that $a = u_1 \cdot \ldots \cdot u_n$ and $\theta (u_i) \in B^{\times} v_{\tau (i)} B^{\times}$ for each $i \in [1,n]$.
\end{description}

\smallskip
\item  $H$ is said to be a  {\it transfer Krull monoid} (over $G_0$) it there exists a weak transfer homomorphism $\theta \colon H \to \mathcal B (G_0)$ for a subset $G_0$ of an abelian group $G$. If $G_0$ is finite, then we say that $H$ is a {\it transfer Krull monoid of finite type}.
\end{enumerate}
\end{definition}

If $R$ is a domain and $R^{\bullet}$ its monoid of cancellative elements, then we say that $R$ is a transfer Krull domain (of finite type) if $R^{\bullet}$ is a transfer Krull monoid (of finite type).
Let $\theta \colon H \to B$ be a weak transfer homomorphism between atomic monoids. It is easy to show that for all $a \in H$ we have $\mathsf L_H (a) = \mathsf L_B ( \theta (a))$ and hence $\mathcal L (H) = \mathcal L (B)$. Since monoids of zero-sum sequences are BF-monoids, the same is true for transfer Krull monoids.

Let $H^*$ be a commutative Krull monoid (i.e., $H^*$ is commutative, completely integrally closed, and $v$-noetherian). Then there is a weak transfer homomorphism $\boldsymbol \beta \colon H^* \to \mathcal B (G_0)$ where $G_0$ is a subset of the class group of $H^*$. Since
monoids of zero-sum sequences are  commutative Krull monoids and since the composition of weak transfer homomorphisms is a weak transfer homomorphism again, a monoid  is a transfer Krull monoid if and only if it allows a weak transfer homomorphism to a commutative Krull monoid.   In particular,  commutative Krull monoids are  transfer Krull monoids. However, a transfer Krull monoid need neither be commutative nor $v$-noetherian nor completely integrally closed. To give a noncommutative example, consider a bounded HNP (hereditary noetherian prime) ring $R$. If every stably free left $R$-ideal is free, then its multiplicative monoid of cancellative elements  is a transfer Krull monoid (\cite{Sm17a}). A class of  commutative weakly Krull domains which are transfer Krull but not  Krull  will be given in Theorem \ref{5.8}. Extended lists of commutative Krull monoids and of transfer Krull monoids, which are not commutative Krull, are given in \cite{Ge16c}.

The next proposition summarizes some key results on the structure of sets of lengths of transfer Krull monoids.

\medskip
\begin{proposition} \label{3.2}~
\begin{enumerate}
\item Every transfer Krull monoid of finite type satisfies the Structure Theorem for Sets of Lengths.

\smallskip
\item For every $M \in \mathbb N_0$ and every finite nonempty set $\Delta \subset \mathbb N$, there is a finite abelian group $G$ such that the following holds{\rm \,:} for every {\rm AAMP} $L$ with difference $d\in \Delta$ and bound $M$ there is some $y_{L} \in \mathbb N$ such that
      \[
      y+L \in \mathcal L (G) \quad \text{ for all } \quad y \ge y_{L} \,.
      \]

\smallskip
\item If $G$ is an infinite abelian group, then
      \[
      \mathcal L (G) = \{ L \subset \N_{\ge 2} \mid L \ \text{is finite and
      nonempty} \
      \} \ \cup \ \{ \{0\}, \{1\} \} .
      \]
\end{enumerate}
\end{proposition}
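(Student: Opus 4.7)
The plan is to reduce all three parts to statements about the monoid of zero-sum sequences. For any transfer Krull monoid $H$ over $G_0$, the weak transfer homomorphism $\theta \colon H \to \mathcal B(G_0)$ preserves lengths by (T1) and (WT2), so $\mathsf L_H(a) = \mathsf L_{\mathcal B(G_0)}(\theta(a))$ for every $a \in H$ and hence $\mathcal L(H) = \mathcal L(\mathcal B(G_0))$. Thus (1) and (2) reduce to the case of $\mathcal B(G_0)$ with $G_0$ finite, and (3) to $\mathcal B(G)$ with $G$ infinite.

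For (1), the Davenport bound in \eqref{inequality-Davenport} forces every atom of $\mathcal B(G_0)$ to have length at most $\mathsf D(G_0) < \infty$; since there are only finitely many sequences over the finite set $G_0$ of bounded length, $\mathcal A(G_0)$ is finite and $\mathcal B(G_0)$ is a finitely generated, reduced, commutative, cancellative monoid. I would then invoke the general fact that every such monoid satisfies the Structure Theorem for Sets of Lengths. The standard argument analyses the finitely generated syzygy module of trade relations among atoms: a pigeonhole argument on long factorizations produces a common difference $d$ and a uniform boundary $M$, so that every sufficiently long set of lengths is periodic with period $d$ outside an error zone of size $M$.

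For (2), the task is to realize each prescribed AAMP shape as a set of lengths. I would build $G$ as a product of cyclic groups whose orders are tailored to $M$ and $\Delta$. For each $d \in \Delta$ one manufactures a trade relation in $\mathcal B(G)$ that changes the factorization length by exactly $d$ (most cleanly using a cyclic component of order divisible by $d$ together with an elementary configuration there). Concatenating many copies of such trade blocks in a single zero-sum sequence generates arbitrarily long arithmetic progressions with difference $d$ in its set of lengths, while attaching extra atoms at the ends carves out the pieces $L'$ and $L''$ permitted by the bound $M$. Multiplying by a suitable central zero-sum element realizes any shift $y \ge y_L$.

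For (3), the inclusion $\mathcal L(G) \subset \{L \subset \N_{\ge 2} \text{ finite nonempty}\} \cup \{\{0\},\{1\}\}$ is immediate: $\mathcal B(G)$ is a BF-monoid, $\mathsf L(1)=\{0\}$, $\mathsf L(u)=\{1\}$ for $u \in \mathcal A(G)$, and otherwise $\mathsf L(a) \subset \N_{\ge 2}$. For the reverse inclusion (Kainrath's theorem), I would use that an infinite abelian group contains independent families of unbounded size: for independent $e_1,\dots,e_r$ and $g = e_1 + \dots + e_r$, the zero-sum sequence $g(-g)\prod_{i=1}^r e_i(-e_i)$ has set of lengths exactly $\{2, r+1\}$, since independence forces every factorization to either pair $g$ with $-g$ and each $e_i$ with $-e_i$, or to absorb $g$ into the atom $g(-e_1)\cdots(-e_r)$ (and symmetrically for $-g$). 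Grafting several such building blocks along fresh independent relations should realize any prescribed finite $L \subset \N_{\ge 2}$. The main obstacle is precisely here: one must verify that no \emph{unintended} factorization length sneaks in, which requires controlling every way of slicing the constructed sequence into minimal zero-sum subsequences. This combinatorial rigidity is available thanks to independence in an infinite $G$, and is exactly what the Structure Theorem in (1) denies in the finite case.
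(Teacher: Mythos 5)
Your treatment of part 1 is essentially the paper's: reduce via the weak transfer homomorphism to $\mathcal B(G_0)$ with $G_0$ finite, observe that $\mathsf D(G_0)<\infty$ makes $\mathcal B(G_0)$ a reduced finitely generated monoid, and invoke the Structure Theorem for such monoids -- this is exactly the result the paper cites from Geroldinger--Halter-Koch (Theorem 4.4.11); your ``syzygy/pigeonhole'' sketch is not a proof of that theorem, but as an appeal to a known result it is acceptable. The problems are in parts 2 and 3, which are deep theorems (Schmid's realization theorem and Kainrath's theorem) that the paper only cites, and which your sketches do not establish. In part 2, an AAMP has an arbitrary period $\mathcal D$ with $\{0,d\}\subset\mathcal D\subset[0,d]$, not just a full arithmetic progression with difference $d$; moreover you must exhibit a single finite group $G$ that works simultaneously for all AAMPs with difference in $\Delta$ and bound $M$, and -- the hard point -- verify that the constructed zero-sum sequence has no unintended factorization lengths. ``Concatenating trade blocks and attaching extra atoms to carve out $L'$ and $L''$'' addresses none of this; it is precisely what the cited paper of Schmid is needed for.

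The concrete failure is in part 3: the claim that an infinite abelian group contains independent families of unbounded size is false. The groups $\Z$, $\Q$ (and all its subgroups), and the quasicyclic groups $C_{p^\infty}$ are infinite, yet every independent family in them has at most one element, so your building block $g(-g)\prod_{i=1}^r e_i(-e_i)$ with $g=e_1+\ldots+e_r$ simply does not exist there. The correct dichotomy behind Kainrath's theorem is that an infinite abelian group either contains large independent families or has elements of unbounded order, and the second case (which includes $G=\Z$, the very case this paper uses in the proof of Theorem \ref{3.7} via Proposition \ref{3.2}.3) requires an entirely different construction based on elements of large order. Even in the large-rank case, your blocks only yield sets of the form $\{2,r+1\}$ and their shifted sumsets, so realizing an arbitrary finite $L\subset\N_{\ge 2}$ (say $\{2,3,4\}$) needs more intricate sequences, and you explicitly defer the verification that no extra lengths ``sneak in'' -- which is the whole content of Kainrath's argument. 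As written, parts 2 and 3 are not proved; they should either be cited (as the paper does) or given genuinely complete constructions.
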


\begin{proof}
1. Let $H$ be a transfer Krull monoid and $\theta \colon H \to \mathcal B (G_0)$ be a weak transfer homomorphism where $G_0$ is a finite subset of an abelian group. Then $\mathcal L (H) = \mathcal L (G_0)$, and $\mathcal B (G_0)$ satisfies the Structure Theorem by \cite[Theorem 4.4.11]{Ge-HK06a}.

\smallskip
For 2. we refer to   \cite{Sc09a}, and for 3. see \cite{Ka99a} and \cite[Section 7.4]{Ge-HK06a}.
\end{proof}

The inequalities in \eqref{inequality-Davenport} and the subsequent remarks show that a finite abelian group $G$ has Davenport constant $\mathsf D (G) \le 4$ if and only if $G$ is cyclic of order $|G|\le 4$ or if it is isomorphic to $C_2 \oplus C_2$ or to $C_2^3$. For these groups an explicit description of their systems of sets of lengths has been given, and we gather this in the next proposition (in Section \ref{4} we will determine the systems $\mathcal L (G)$ for all groups $G$ with $\mathsf D (G)=5$).

\medskip
\begin{proposition} \label{3.3}~

\begin{enumerate}
\item If $G$ is an abelian group, then $\mathcal L(G) = \{y +L \mid y \in \N_0, \ L \in \mathcal
      L(G^\bullet) \} \supset \bigl\{ \{y\} \,\bigm|\, y \in \N_0
      \bigr\}$, and equality holds if and only if \ $|G| \le 2$.

\smallskip
\item $\mathcal L (C_3) = \mathcal L (C_2 \oplus C_2) = \bigl\{ y
      + 2k + [0, k] \, \bigm| \, y,\, k \in \N_0 \bigr\}$.

\smallskip
\item $\mathcal L (C_4) = \bigl\{ y + k+1 + [0,k] \, \bigm|\, y,
      \,k \in \N_0 \bigr\} \,\cup\,  \bigl\{ y + 2k + 2 \cdot [0,k] \, \bigm|
      \, y,\, k \in \N_0 \bigr\} $.

\smallskip
\item $\mathcal L (C_2^3)  =  \bigl\{ y + (k+1) + [0,k] \,
      \bigm|\, y
      \in \N_0, \ k \in [0,2] \bigr\}$ \newline
      $\quad \text{\, } \ \qquad$ \quad $\cup \ \bigl\{ y + k + [0,k] \, \bigm|\, y \in \N_0, \ k \ge 3 \bigr\}
      \cup \bigl\{ y + 2k
      + 2 \cdot [0,k] \, \bigm|\, y ,\, k \in \N_0 \bigr\}$.
\end{enumerate}
\end{proposition}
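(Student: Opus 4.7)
I would prove (1) by a direct argument about the atom $0$, and (2)--(4) by an explicit combinatorial analysis of factorizations in $\mathcal B(G^\bullet)$ based on the short list of atoms.

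For (1), the element $0 \in G$ is itself a minimal zero-sum sequence, so $0 \in \mathcal A(\mathcal B(G))$, and every $T \in \mathcal B(G)$ admits a unique decomposition $T = 0^y S$ with $y \in \N_0$ and $S \in \mathcal B(G^\bullet)$. Factorizations of $T$ into atoms of $\mathcal B(G)$ are in bijection with pairs consisting of $y$ copies of the atom $0$ and a factorization of $S$ in $\mathcal B(G^\bullet)$, which gives $\mathsf L_{\mathcal B(G)}(T) = y + \mathsf L_{\mathcal B(G^\bullet)}(S)$ and hence the asserted equality. The strict containment then reduces to showing that $\mathcal B(G^\bullet)$ is half-factorial if and only if $|G| \le 2$: for $|G| \le 2$ the monoid $\mathcal B(G^\bullet)$ is free of rank at most $1$, while for $|G| \ge 3$ one exhibits a sequence with two distinct factorization lengths, using that $\Delta(G) \neq \emptyset$ in this range.

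For (2)--(4), I would fix a presentation of $G$, list $\mathcal A(\mathcal B(G^\bullet))$ explicitly, and then enumerate factorizations of a generic zero-sum sequence $S \in \mathcal B(G^\bullet)$, parametrized by its multiplicity vector. For $C_3 = \langle g\rangle$, the atoms are $g \cdot 2g$, $g^3$, $(2g)^3$, and every $S = g^a(2g)^b$ with $a \equiv b \pmod 3$ has factorizations indexed by the number $m$ of uses of the length-$2$ atom, subject to $0 \le m \le \min(a,b)$ and $m \equiv a \pmod 3$. A direct computation then yields $\mathsf L(S) = \tfrac{a+b+m_0}{3} + \bigl[0,\,\lfloor\min(a,b)/3\rfloor\bigr]$, where $m_0 \in \{0,1,2\}$ is the least admissible value of $m$; checking that every pair $(y,k) \in \N_0^2$ is realized then gives $\mathcal L(C_3)$ after applying~(1). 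The analysis for $C_2 \oplus C_2$ is analogous, using the three atoms $g^2$ of length~$2$ together with the single atom $e_1 e_2 (e_1+e_2)$ of length~$3$. For $C_4$ and $C_2^3$ the lists of atoms are longer (lengths $2$, $3$, and $4$ occur), and one performs a case distinction according to $\supp(S)$ and the residues of the multiplicities of its terms modulo the orders of the corresponding elements.

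The main obstacle is the $C_2^3$ case, where atoms of lengths $2$, $3$, and $4$ interact: one must verify that the various ways of ``exchanging'' sub-products between atoms produce length sets exclusively in the three families $y+(k+1)+[0,k]$ for $k\in[0,2]$, $y+k+[0,k]$ for $k\ge 3$, and $y + 2k + 2\cdot[0,k]$. I would organize the verification according to the dimension of $\langle \supp(S)\rangle$ and to whether $\supp(S)$ contains an independent triple or not. The a priori bounds on $\rho_k(C_2^3)$ from \eqref{inequality-elasticity}, together with the Structure Theorem (Proposition~\ref{3.2}(1)), keep the length sets well controlled and reduce the verification to finitely many small cases modulo the shift by the atom $0$. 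For brevity one would typically invoke the explicit computations in \cite[Chapter~7]{Ge-HK06a}, where these descriptions are carried out in detail.
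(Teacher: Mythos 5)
Your proposal is correct in substance, but it is worth noting that the paper's own ``proof'' of Proposition \ref{3.3} is nothing more than a citation of \cite[Proposition 7.3.1 and Theorem 7.3.2]{Ge-HK06a}, whereas you partially reprove the result and only fall back on that same reference at the end. Your treatment of part (1) (the atom $0$ is a prime of $\mathcal B(G)\cong\mathcal F(\{0\})\times\mathcal B(G^\bullet)$, so lengths split as $y+\mathsf L_{\mathcal B(G^\bullet)}(S)$, and half-factoriality of $\mathcal B(G)$ is equivalent to $|G|\le 2$) and of part (2) (explicit atom lists for $C_3$ and $C_2\oplus C_2$, with the length set parametrized by the number of uses of the length-$2$, respectively length-$3$, atom) is a correct, self-contained argument that the paper does not spell out; your formula $\mathsf L(S)=\frac{a+b+m_0}{3}+[0,\lfloor \min(a,b)/3\rfloor]$ is right since $m_0$ is exactly the residue of $\min(a,b)$ modulo $3$. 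For parts (3) and (4), however, your own sketch would not stand alone: the assertion that the bounds on $\rho_k$ together with the Structure Theorem ``reduce the verification to finitely many small cases'' is not substantiated (the Structure Theorem bound $M$ is not explicit, and the genuinely delicate points are exclusion statements, e.g.\ that $[2,4]\notin\mathcal L(C_2^3)$ and $[k,2k]\notin\mathcal L(C_2^3)$ for $k\in[1,2]$ while $[3,6]\in\mathcal L(C_2^3)$, which require ad hoc analysis of which atoms of lengths $2,3,4$ can be recombined). Since you explicitly delegate precisely those computations to \cite[Chapter 7]{Ge-HK06a}, which is the paper's entire proof, the proposal is acceptable; the gain of your route is that the easy cases (1) and (2) become transparent and self-contained, at the cost of a longer write-up, while the paper buys brevity by citing the reference wholesale.
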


\begin{proof}
See \cite[Proposition 7.3.1 and Theorem 7.3.2]{Ge-HK06a}.
 \end{proof}

\medskip
Let $G$ and $G'$ be abelian groups. Then their monoids of zero-sum sequences $\mathcal B (G)$ and $\mathcal B (G')$ are isomorphic if and only if the groups $G$ and $G'$ are isomorphic (\cite[Corollary 2.5.7]{Ge-HK06a}). The standing conjecture states that the systems of sets of lengths $\mathcal L (G)$ and $\mathcal L (G')$ of finite groups coincide if and only $G$ and $G'$ are isomorphic (apart from the trivial cases listed in Proposition \ref{3.3}). Here is the precise formulation of the conjecture (it was first stated in \cite{Ge16c}).

\medskip
\begin{conjecture} \label{3.4}
Let $G$ be a finite abelian group with $\mathsf D (G) \ge 4$. If $G'$ is an abelian group with $\mathcal L (G) = \mathcal L (G')$, then $G$ and $G'$ are isomorphic.
\end{conjecture}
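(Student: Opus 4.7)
The plan is first to show that \( G' \) must be finite, and then to read off from \( \mathcal L(G) \) enough invariants of \( G \) to pin down its isomorphism type.

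First, if \( G' \) were infinite, Proposition~\ref{3.2}(3) would place every two-element set \( \{2,N\} \) with \( N \ge 2 \) in \( \mathcal L(G') \), whence \( \rho(G') = \infty \); this contradicts \( \rho(G) = \mathsf D(G)/2 < \infty \) from \eqref{inequality-elasticity}. Hence \( G' \) is finite, and by Proposition~\ref{3.3} together with \( \mathsf D(G) \ge 4 \) one rules out the trivial cases \( |G'| \le 2 \) and \( G' \in \{C_3, C_2 \oplus C_2\} \), so \( \mathsf D(G') \ge 4 \) as well.

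Next, I would harvest the numerical invariants that depend only on \( \mathcal L(G) \). The elasticities \( \rho_k \), the set of distances \( \Delta \), the refined set \( \Delta_1 \), and the presence or absence of sets of a prescribed shape in \( \mathcal L(G) \) are manifestly such invariants. From \( \rho_2(G) = \mathsf D(G) \) in \eqref{inequality-elasticity} one deduces \( \mathsf D(G) = \mathsf D(G') \); this already suffices to identify \( G \) among cyclic groups, since \( \mathsf D(C_n) = n \). To separate non-cyclic groups of the same Davenport constant, one would then compare \( \Delta(G) \) and \( \Delta_1(G) \); the odd-indexed elasticities \( \rho_{2k+1}(G) \), whose precise values by \eqref{inequality-elasticity} depend on the structure of long atoms; and the shapes of distinguished sets of lengths, typically \( \mathsf L(U(-U)) \) for a minimal zero-sum sequence \( U \) of length \( \mathsf D(G) \). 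For rank-two groups this program is completed by the available inverse additive-combinatorial theorems on minimal zero-sum sequences of maximal length, and for the case \( \mathsf D(G) = 5 \) it will be carried out by direct inspection of the explicit classification of \( \mathcal L(G) \) obtained in Section~\ref{4}.

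The main obstacle is that for groups of rank \( \ge 3 \) no comparable classification of minimal zero-sum sequences of length close to \( \mathsf D(G) \) is known, and consequently one cannot yet extract from \( \mathcal L(G) \) enough information to recover the cyclic invariants \( (n_1,\dots,n_r) \) of \( G \cong C_{n_1} \oplus \dots \oplus C_{n_r} \). A complete proof of Conjecture~\ref{3.4} therefore appears to require either substantial progress on those inverse problems or the isolation of qualitatively new invariants of \( \mathcal L(G) \) that distinguish groups of the same Davenport constant without passing through a fine analysis of maximal-length atoms; the program of Section~\ref{3} is designed to take first steps in this second direction.
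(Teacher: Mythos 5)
The statement you were asked to prove is Conjecture~\ref{3.4}: the paper itself offers no proof of it, and explicitly notes that it is known only for groups of rank at most two, for $C_n^r$ with $r$ small relative to $n$, and (after Section~\ref{4}) for the groups with $\mathsf D(G)=5$, while remaining ``far open in general.'' Your preliminary reductions are sound and essentially reproduce the paper's discussion surrounding the conjecture: the finiteness of $G'$ via Proposition~\ref{3.2}.3 and the finiteness of $\rho(G)=\mathsf D(G)/2$ is correct, as is the deduction $\mathsf D(G)=\mathsf D(G')$ from $\rho_2$ via \eqref{inequality-elasticity}, which is exactly the argument the paper gives right after stating the conjecture. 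But from that point on your text is a research program, not a proof, and you say so yourself: you list the invariants one would compare ($\Delta$, $\Delta_1$, the odd elasticities $\rho_{2k+1}$, and sets of the form $\mathsf L\big(U(-U)\big)$ for $|U|=\mathsf D(G)$), you defer the rank-two case to existing inverse theorems and the $\mathsf D(G)=5$ case to Section~\ref{4}, and you concede that for rank $\ge 3$ the needed classification of long minimal zero-sum sequences does not exist.

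The genuine gap is therefore the entire core of the argument: nowhere do you derive, from $\mathcal L(G)=\mathcal L(G')$ alone, that the invariants you name determine the cyclic decomposition $(n_1,\dots,n_r)$ of $G$, and indeed no such derivation is currently known. Knowing $\mathsf D(G)=\mathsf D(G')$ does not even separate a cyclic group from a non-cyclic group with the same Davenport constant (e.g.\ $C_5$, $C_2\oplus C_4$, $C_3\oplus C_3$, $C_2^4$ all have $\mathsf D=5$), so the step ``this already suffices to identify $G$ among cyclic groups'' only works once one already knows both groups are cyclic. What you have written is an accurate summary of the state of the art and of the strategy the paper's Sections~\ref{3} and~\ref{4} pursue (maximal and minimal elements of $\Omega_m$, the intersection $\bigcap\mathcal L(G)$, explicit determination of $\mathcal L(G)$ for $\mathsf D(G)=5$), but it does not constitute a proof of the conjecture, and it could not be checked against a proof in the paper because the paper does not claim one.
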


\smallskip
The conjecture holds true for groups $G$ having rank $\mathsf r (G) \le 2$, for groups of the form $G=C_n^r$ (if $r$ is small with respect to $n$), and others (\cite{Ge-Sc16a, Ge-Zh16b, Zh17a}). But it is far open in general, and the goal of this section is to develop new  viewpoints of looking at this conjecture.

Let $G$  be a finite abelian group with $\mathsf D (G) \ge 4$. If $G'$ is a finite abelian group with $\mathcal L (G)=\mathcal L (G')$, then \eqref{inequality-elasticity} shows that
\[
\begin{aligned}
 \mathsf D (G) & = \rho_2 (G)  = \sup \{\sup L \mid 2 \in L \in \mathcal L (G) \} \\
     & =  \sup \{\sup L \mid 2 \in L \in \mathcal L (G') \} = \rho_2 (G') = \mathsf D (G') \,.
\end{aligned}
\]
We see from the inequalities in \eqref{inequality-Davenport} that there are (up to isomorphism) only finitely many finite abelian groups $G'$ with given Davenport constant, and hence there are only finitely many finite abelian groups $G'$ with $\mathcal L (G)=\mathcal L (G')$.
 Thus Conjecture \ref{3.4} is equivalent to the statement that for each $m \ge 4$ and for each two non-isomorphic finite abelian groups $G$ and $G'$ having Davenport constant $\mathsf D (G)=\mathsf D(G')=m$ the systems  $\mathcal L (G)$ and $\mathcal L (G')$ are distinct. Therefore we have to study the set
\[
\Omega_m = \{ \mathcal L (G) \mid G \ \text{is a finite abelian group with} \ \mathsf D (G)=m\}
\]
of all systems of sets of lengths stemming from groups having Davenport constant equal to $m$. If a group $G'$ is a proper subgroup of $G$, then $\mathsf D (G') < \mathsf D (G)$ (\cite[Proposition 5.1.11]{Ge-HK06a}) and hence  $\mathcal L (G') \subsetneq \mathcal L (G)$. Thus if $\mathsf D (G)=\mathsf D (G')$ for some group $G'$, then none of the groups is isomorphic to a proper subgroup of the other one. Conversely, if  $G'$ is a  finite abelian group with $\mathcal L (G') \subset \mathcal L (G)$, then $\mathsf D (G') = \rho_2 (G') \le \rho_2 (G) = \mathsf D (G)$. However, it may happen that $\mathcal L (G') \subsetneq \mathcal L (G)$ but $\mathsf D (G')= \mathsf D (G)$. Indeed, Proposition \ref{3.3} shows that $\mathcal L (C_4) \subsetneq \mathcal L (C_2^3)$, and we will observe  this phenomenon again in Section \ref{4}.

\medskip
\begin{theorem} \label{3.5}
For  $m \in \N$, let
\[
\Omega_m = \{ \mathcal L (G) \mid G \ \text{is a finite abelian group with} \ \mathsf D (G)=m\}.
\]
Then
$\mathcal L (C_2^{m-1})$ is a maximal element and $\mathcal L (C_m)$ is a minimal element in $\Omega_m$ (with respect to set-theoretical inclusion).
Furthermore, if $G$ is an abelian group with $\mathsf D (G)=m$ and $\mathcal L (G) \subset \mathcal L (C_2^{m-1})$, then $G \cong C_m $ or $G \cong C_2^{m-1}$.
\end{theorem}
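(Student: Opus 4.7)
My plan separates the three assertions and draws on a common toolkit: the elasticity identities in \eqref{inequality-elasticity} (in particular $\rho_2(G)=\mathsf D(G)=m$ holds uniformly across $\Omega_m$, so the finer $\rho_{2k+1}$ and the shapes of individual AAMPs occurring in $\mathcal L(G)$ carry all the discriminating information); the subgroup monotonicity $\mathcal L(H)\subseteq\mathcal L(G)$ for $H\le G$; and explicit shape data for $\mathcal L(C_n)$ and $\mathcal L(C_2^r)$ beyond Proposition~\ref{3.3}.

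For the minimality of $\mathcal L(C_m)$, I would suppose $G'$ has $\mathsf D(G')=m$ and $\mathcal L(G')\subseteq\mathcal L(C_m)$, write $G'=C_{n_1}\oplus\cdots\oplus C_{n_r}$ with $1<n_1\mid\cdots\mid n_r\le m$, and show that $r=1$ and $n_1=m$, hence $G'\cong C_m$. If $r\ge 2$, I would pick an independent pair $(e_1,e_2)$ in $G'$ and use it to construct a zero-sum element whose set of lengths is a difference-$1$ AAMP of a shape forbidden in $\mathcal L(C_m)$; the obstruction uses the fact that the extremal-length factorizations in $\mathcal B(C_m)$ arise from products of the two long atoms $g^m$ and $(-g)^m$, which severely restricts the admissible difference-$1$ blocks. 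For the characterization, assume $\mathsf D(G)=m$ and $\mathcal L(G)\subseteq\mathcal L(C_2^{m-1})$ and split on $\exp(G)$. If $\exp(G)=2$ then $G\cong C_2^r$ with $r+1=m$, so $G\cong C_2^{m-1}$. If $\exp(G)=n\ge 3$, pick $g$ of order $n$; monotonicity yields $\mathcal L(C_n)\subseteq\mathcal L(C_2^{m-1})$, and comparing the closed forms for $\rho_{2k+1}$ on cyclic groups (distinguishing odd from even $n$) and on $C_2^{m-1}$, together with $n\le m$ and AAMP-shape obstructions for the extremal sets, forces $n=m$ and the rank of $G$ to be $1$, giving $G\cong C_m$; $p$-group and mixed-group cases are excluded analogously. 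For the maximality of $\mathcal L(C_2^{m-1})$, if $\mathcal L(C_2^{m-1})\subseteq\mathcal L(G')$ with $\mathsf D(G')=m$, I would identify an AAMP $L_0\in\mathcal L(C_2^{m-1})$ of a shape characteristic of the elementary abelian $2$-group structure (for instance a long difference-$1$ interval of the form $[k,2k]$, already visible for $m=4$ in Proposition~\ref{3.3}); requiring $L_0\in\mathcal L(G')$ imposes an atom-pairing relation ($-U=U$ for every atom of length $m$) only compatible with $\exp(G')=2$, forcing $G'\cong C_2^{m-1}$.

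The main obstacle is the shape analysis separating $\mathcal L(C_m)$ and $\mathcal L(C_2^{m-1})$ from every other $\mathcal L(G')$ in $\Omega_m$: exhibiting a difference-$1$ AAMP outside $\mathcal L(C_m)$ for the minimality, and showing the intervals $[k,2k]$ are characteristic of elementary abelian $2$-groups among $\Omega_m$ for the maximality. Proposition~\ref{3.3} supplies this information explicitly for $m\le 4$; the uniform statement for general $m$ rests on a careful classification of extremal factorizations in $\mathcal B(C_m)$ and $\mathcal B(C_2^{m-1})$ and on the atom-pairing relations they enforce.
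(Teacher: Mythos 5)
Your outline departs from the paper's argument, and in its present form it has genuine gaps. The most serious one is in the maximality step: the sets you propose as ``characteristic of the elementary abelian $2$-group structure'', namely long intervals of the form $[k,2k]$, do not in fact separate $\mathcal L (C_2^{m-1})$ from the other systems in $\Omega_m$ once $m \ge 5$. By Theorem \ref{4.7} one has $[k,2k] = \lfloor k/3\rfloor + \lceil 2k/3\rceil + [0,k] \in \mathcal L (C_2 \oplus C_4)$ for every $k \ge 2$, and by Theorem \ref{4.1} also $[3j,6j] \in \mathcal L (C_3 \oplus C_3)$, although both groups have Davenport constant $5$; so no obstruction arises from these intervals. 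Moreover, the inference from ``$L_0 \in \mathcal L (G')$'' to ``$-U=U$ for every atom of length $m$'' is unwarranted: realizing a single set of lengths constrains one zero-sum sequence, not all atoms of $G'$. What actually separates $C_2^{m-1}$ inside $\Omega_m$ is the maximal difference occurring in long AAMPs, i.e.\ $\max \Delta_1 (G) = \max \Delta^* (G) = \max\{\exp (G)-2, \mathsf r (G)-1\}$; the paper uses $\max\Delta_1(C_2^{m-1})=m-2$ to reduce to $G \cong C_2^{m-1}$ or $G \cong C_m$, and then excludes $C_m$ via $m-3 \notin \Delta_1 (C_m)$ (for $m\ge 5$; $m=4$ by Proposition \ref{3.3}). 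Your sketch contains no substitute for this invariant.

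The minimality and the ``furthermore'' parts are also not closed. For minimality you only assert that a rank $\ge 2$ group admits ``a difference-$1$ AAMP of a shape forbidden in $\mathcal L (C_m)$'' without constructing it, and this is exactly the nontrivial content to be proved. For the furthermore statement, passing to the cyclic subgroup $C_n$ with $n=\exp (G)$ cannot work: the inclusion $\mathcal L (C_n) \subset \mathcal L (C_2^{m-1})$ is frequently true (e.g.\ $n=4$, $m=5$), and the $\rho_{2k+1}$ comparison on that subgroup is vacuous because $\mathsf D (C_n)=n<m$ there; in particular $G = C_2 \oplus C_4$ passes every test you list at the subgroup level, yet must be excluded. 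One has to argue with sets of lengths of $G$ itself. The paper does this by taking a longest atom $U$ (so $|U|=\mathsf D (G)=m$), noting $\{2,m\} \subset \mathsf L \big(U(-U)\big) \in \mathcal L (G) \subset \mathcal L (C_m)$ (resp.\ $\subset \mathcal L (C_2^{m-1})$), observing that in a cyclic or elementary $2$-group every set of lengths containing $\{2, \mathsf D\}$ equals $\{2,\mathsf D\}$, and then invoking the converse characterization (\cite[Theorem 6.6.3]{Ge-HK06a}) that only cyclic and elementary $2$-groups have this property, which forces $G \cong C_m$ or $G \cong C_2^{m-1}$. Your idea about extremal factorizations coming from $g^m(-g)^m$ points in this direction, but without that characterization (or an argument replacing it) the proof does not go through.
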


\begin{proof}
If $m \in [1,2]$, then $|\Omega_m|=1$ and hence all assertions hold. Since $C_3$ and $C_2 \oplus C_2$ are the only groups (up to isomorphism) with Davenport constant three, and since $\mathcal L (C_3)= \mathcal L (C_2^2)$ by  Proposition \ref{3.3}, the assertions follow. We suppose that $m \ge 4$ and proceed in two steps.

1.
To show that $\mathcal L (C_2^{m-1})$ is maximal, we study, for a finite abelian group $G$, the set $\Delta_1 (G)$. We define
\[
\Delta^* (G) = \{ \min \Delta (G_0) \mid G_0 \subset G \ \text{with} \ \Delta (G_0) \ne \emptyset \} \,,
\]
and recall that (see \cite[Corollary 4.3.16]{Ge-HK06a})
\[
\Delta^* (G) \subset \Delta_1 (G) \subset \{ d_1 \in \Delta (G) \mid d_1 \ \text{divides some} \ d \in \Delta^* (G) \} \,.
\]
Thus $\max \Delta_1 (G) = \max \Delta^* (G)$, and  \cite[Theorem 1.1]{Ge-Zh16a} implies that $\max \Delta^* (G) = \max \{\exp (G)-2, \mathsf r (G)-1\}$.
Assume to the contrary that there is  a finite abelian group $G$ with $\mathsf D (G) =m\ge4$ that is not an elementary $2$-group such that $\mathcal L (C_2^{m-1} ) \subset \mathcal L (G)$. Then
\[
\begin{split}
m-2
& =\max \Delta^*(C_2^{m-1}) = \max \Delta_1 (C_2^{m-1}) \le \max \Delta_1 (G) \\
&  =\max \Delta^* (G) = \max \{ \exp (G)-2, \mathsf r (G)-1\} \,.
\end{split}
\]
If $\mathsf r (G) \ge m-1$, then $\mathsf D (G)=m$ implies that $G \cong C_2^{m-1}$, a contradiction. Thus $\exp (G) \ge m$, and since $\mathsf D (G)=m$ we infer that that $G \cong C_m$. If $m=4$, then Proposition \ref{3.3}.4 shows that $\mathcal L (C_2^3) \not\subset \mathcal L (C_4)$, a contradiction. Suppose that $m \ge 5$. Then $\Delta^*(C_2^{m-1}) = \Delta_1 (C_2^{m-1}) = \Delta (C_2^{m-1}) = [1, m-2]$ by \cite[Corollary 6.8.3]{Ge-HK06a}. For cyclic groups we have $\max \Delta^* (C_m) = m-2$ and
$\max ( \Delta^* (C_m) \setminus \{m-2\}) = \lfloor m/2 \rfloor - 1$ by \cite[Theorem 6.8.12]{Ge-HK06a}.
Therefore $\mathcal L (C_2^{m-1} ) \subset \mathcal L (C_m)$ implies that
\[
[1, m-2] = \Delta_1 (C_2^{m-1}) \subset \Delta_1 (C_m) \,,
\]
a contradiction to $m-3 \notin \Delta_1 (C_m)$.

\smallskip
2. We recall some facts. Let $G$ be a group with $\mathsf D (G)=m$. If $U \in \mathcal A (G)$ with $|U|=\mathsf D (G)$, then $\{2, \mathsf D (G) \} \subset \mathsf L \big( U(-U) \big)$. Cyclic groups and elementary $2$-groups are the only groups $G$ with the following property: if $L \in \mathsf L (G)$ with $\{2, \mathsf D (G)\} \subset L$, then $L = \{2, \mathsf D (G)\}$ (\cite[Theorem 6.6.3]{Ge-HK06a}).

Now assume to the contrary that there is a finite abelian group $G$ with $\mathsf D (G)=m$ such that $\mathcal L (G) \subset \mathcal L (C_m)$. Let $L  \in \mathcal L (G)$ with $\{2, \mathsf D (G) \} \subset  L$. Then $L \in \mathcal L (C_m)$ whence $L = \{2, \mathsf D (G)\}$ which implies that $G$ is cyclic or an elementary $2$-group. By 1., $G$ is not an elementary $2$-group whence $G$ is cyclic which implies $G \cong C_m$ and hence $\mathcal L (G) = \mathcal L (C_m)$.

The furthermore assertion on groups $G$ with $\mathsf D (G)=m$ and $\mathcal L (G) \subset \mathcal L (C_2^{m-1})$ follows as above by considering sets of lengths $L$ with $\{2, \mathsf D (G)\} \subset L$.
 \end{proof}

In Section \ref{4} we will see that $\mathcal L (C_2^{m-1})$ need not be the largest element in $\Omega_m$, and that indeed $\mathcal L (C_m) \subset\mathcal L (C_2^{m-1})$ for $m \in [2,5]$, where the inclusion is strict for $m \ge 4$. On the other hand, it is shown in \cite{Ge-Zh18a} that $\mathcal L (C_m) \not\subset\mathcal L (C_2^{m-1})$ for infinitely many $m \in \N$.

\medskip
\begin{theorem} \label{3.6}
We have
 \[
      \bigcap \mathcal L (G) = \bigl\{ y
      + 2k + [0, k] \, \bigm| \, y,\, k \in \N_0 \bigr\} \,,
      \]
      where the intersection is taken over all finite abelian groups $G$ with $|G| \ge 3$.
\end{theorem}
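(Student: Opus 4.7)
The right-hand side equals $\mathcal L(C_3)$ by Proposition~\ref{3.3}.2, so the inclusion ``$\subseteq$'' is immediate since $C_3$ is among the groups in the intersection.

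For ``$\supseteq$'', it suffices to show that every set $y + 2k + [0,k]$ lies in $\mathcal L(G)$ for every finite abelian $G$ with $|G| \ge 3$. The case $k=0$ is $\{y\} = \mathsf L(0^y)$. For $k \ge 1$, the sequence $(0)$ is the unique atom of $\mathcal B(G)$ with $0$ in its support (any other zero-sum sequence containing $0$ has $(0)$ as a proper zero-sum subsequence), so $\mathsf L(0^y B) = y + \mathsf L(B)$ whenever $0 \notin \supp B$. It therefore suffices, for each such $G$ and each $k \ge 1$, to exhibit $B_k \in \mathcal B(G)$ with $0 \notin \supp B_k$ and $\mathsf L(B_k) = 2k + [0,k]$.

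I would argue by cases on the subgroup structure of $G$, using that for any subgroup $H \le G$ the inclusion $\mathcal B(H) \hookrightarrow \mathcal B(G)$ is length-preserving (atoms of $\mathcal B(H)$ are precisely the atoms of $\mathcal B(G)$ supported in $H$), hence $\mathcal L(H) \subseteq \mathcal L(G)$. If $G$ contains $C_3$ or $C_2 \oplus C_2$, Proposition~\ref{3.3}.2 immediately yields $\{y + 2k + [0,k]\} \subseteq \mathcal L(G)$. Otherwise $G$ has no element of order $3$ and $2$-rank at most $1$, which combined with $|G| \ge 3$ forces $G$ to contain a cyclic subgroup $\langle g\rangle \cong C_n$ for some $n \ge 4$. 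For $n=4$ the identity $2k + [0,k] = (k{-}1) + (k{+}1) + [0,k]$ exhibits this set in the first family of Proposition~\ref{3.3}.3, whence $2k + [0,k] \in \mathcal L(C_4) \subseteq \mathcal L(G)$.

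The main obstacle is the cyclic case $\langle g\rangle \cong C_n$ with $n \ge 5$. The element $A = g^2 \cdot (2g) \cdot ((n-2)g) \cdot ((n-1)g)^2$ admits exactly the two factorizations $A = (g^2(n-2)g)\cdot(((n-1)g)^2(2g))$ of length $2$ and $A = (g(n-1)g)^2\cdot((2g)(n-2)g)$ of length $3$, so $\mathsf L(A) = \{2,3\}$. When $2k < n$ all multiplicities in $A^k$ remain strictly below $n$, no atom of the form $h^n$ can appear, and a direct count with the content-equations yields $\mathsf L(A^k) = 2k + [0,k]$. When $2k \ge n$, however, atoms like $g^n$ and $((n-1)g)^n$ become subsequences of $A^k$ and produce factorizations of length outside $[2k, 3k]$, so $A^k$ must be replaced by a refined element of content $6k$ in which every element's multiplicity stays strictly below $n$ (for instance by mixing $A$ with a twin built from a different generator of $\langle g\rangle$). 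The additive invariant $\sigma'(U) = \tfrac{1}{n}\sum_h \mathsf v_h(U)\,\overline h$, where $\overline h \in [0, n-1]$ is the integer representative of $h$, satisfies $1 \le \sigma'(U) \le |U|-1$ on every atom with support in $G \setminus \{0\}$ and is additive across factorizations; it provides the framework needed to pin down the length of each factorization and to rule out stray lengths, and handling it is the technical heart of this last case.
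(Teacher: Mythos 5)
Your reduction to realizing $[2k,3k]$, and the cases where $G$ contains $C_3$, $C_2\oplus C_2$ or $C_4$, are fine and match the paper's first step. The gap is in the remaining case of a cyclic subgroup $C_n$ with $n\ge 5$, which is exactly where the work lies. First, your intermediate claim that $\mathsf L(A^k)=2k+[0,k]$ whenever $2k<n$ is false: take $n=7$, $k=3$, so $A^3=g^6(2g)^3(5g)^3(6g)^6$ and every multiplicity is strictly below $7$; nevertheless $(5g)^3(6g)$ is an atom, and
\[
A^3=\bigl((5g)^3(6g)\bigr)\cdot\bigl((6g)^2(2g)\bigr)^2\cdot\bigl(g(6g)\bigr)\cdot\bigl(g^5(2g)\bigr)
\]
is a factorization of length $5<2k$. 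The same example shows that your proposed repair for $2k\ge n$ (replace $A^k$ by a ``refined element'' whose multiplicities stay below $n$) cannot work as stated: stray lengths are produced not only by atoms of the form $h^n$ but by atoms of large $g$-norm such as $((n-2)g)^3((n-1)g)$, which survive the multiplicity restriction. Finally, the invariant $\sigma'$ (the $g$-norm) with only the bounds $1\le\sigma'(U)\le|U|-1$ cannot pin the lengths to $[2k,3k]$: it yields $\max\mathsf L\le 3k$, but the lower bound $2k$ requires knowing exactly which atoms of norm $\ge 2$ can divide the element, and over a four-element support this is precisely what fails above. So the ``technical heart'' you defer is not a routine verification; with the element you chose it is false, and with the sketched replacement it is unspecified.

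The paper avoids all of this by a different choice in the residual case: if $G$ contains neither two independent elements of order $2$ nor an element of order $4$, then $G$ contains an element $g$ of odd prime order $p$, and one takes $B_k=\bigl(g^p(2g)^p\bigr)^k$, whose support has only two elements. Over $\{g,2g\}$ the atom structure is completely understood: $(2g)^p$ is the unique atom of $g$-norm $2$ and every other atom has $g$-norm $1$, so every factorization of $B_k$ has length $3k-\ell'$ with $\ell'\in[0,k]$, giving $\mathsf L(B_k)=[2k,3k]$ even though the multiplicities exceed $p$. You could salvage your argument along the same lines: in your remaining case $G$ contains an element of order $4$ (already handled) or an element of odd prime order $p\ge 5$, and the two-element-support construction with the $g$-norm then closes the case.
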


\begin{proof}
By Proposition \ref{3.3}.2, the intersection on the left hand side is contained in the set on the right hand side. Let $G$ be a finite abelian group with $|G| \ge 3$. If $L \in \mathcal L (G)$, then $y+L \in \mathcal L (G)$. Thus it is sufficient to show that $[2k,3k] \in \mathcal L (G)$ for every $k \in \N$.
If $G$ contains two independent elements of order $2$ or an element of order $4$, then the claim follows by Proposition \ref{3.3}. Thus, it remains to consider the case when $G$ contains an element $g$ with $\ord (g)= p$ for some odd prime $p \in \N$. Let $k \in \N$ and  $B_k = ((2g)^p g^p)^k$. We assert that $\mathsf L (B_k)=[2k,3k]$.

We set $U_1= g^p$, $U_2 = (2g)^p$, $V_1 = (2g)^{(p-1)/2}g$, and $V_2=(2g)g^{p-2}$. Since $U_1U_2=V_1^2V_2$ and
\[
B_k = (U_1U_2)^k = (U_1 U_2)^{k-\nu} (V_1^2V_2)^{\nu} \quad \text{for all} \quad \nu \in [0,k] \,,
\]
it follows that $[2k,3k] \subset \mathsf L (B_k)$.

In order  to show there are no other factorization lengths, we recall the concept of the $g$-norm of sequences. If $S = (n_1g) \cdot \ldots (n_{\ell}g) \in \mathcal B (\langle g \rangle)$, where $\ell \in \N_0$ and $n_1, \ldots, n_{\ell} \in [1, \ord (g)]$, then
\[
||S||_g = \frac{n_1+ \ldots + n_{\ell}}{\ord (g)} \in \N
\]
is the $g$-norm of $S$. Clearly, if $S = S_1 \cdot \ldots \cdot S_m$ with $S_1, \ldots , S_m \in \mathcal A (G)$, then $||S||_g = ||S_1||_g + \ldots + ||S_m||_g$.

Note that $U_2=(2g)^p$ is the only atom in $\mathcal A (\{g,2g\})$ with $g$-norm $2$, and all other atoms in $\mathcal A (\{g,2g\})$ have $g$-norm $1$.
Let $B_k = W_1 \cdot \ldots  \cdot W_{\ell}$ be a factorization of $B_k$, and
let $\ell'$ be the number of $i \in[1,\ell] $ such that $W_i = (2g)^p$.
We have $\| B_k\|_g= 3k$ and  thus $3k = 2\ell' + (\ell- \ell')= \ell' + \ell$. Since  $\ell' \in [0,k]$, it follows that $\ell =3k-\ell' \in [2k , 3k] $.
 \end{proof}

\medskip
\begin{theorem} \label{3.7}
Let $L \subset \N_{\ge 2}$ be a finite  nonempty subset. Then there are only finitely many pairwise non-isomorphic finite abelian groups $G$  such that $L \notin \mathcal L (G)$.
\end{theorem}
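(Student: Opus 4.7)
The plan is to prove the equivalent statement that there exists a threshold $D_0 = D_0(L)$ such that $L \in \mathcal L(G)$ for every finite abelian group $G$ with $\mathsf D(G) \ge D_0$. Since the inequalities in \eqref{inequality-Davenport} together with the elementary estimate $n - 1 \ge \log_2 n$ (valid for $n \ge 2$) yield $\mathsf D(G) \ge 1 + \log_2 |G|$, only finitely many isomorphism classes of finite abelian groups satisfy $\mathsf D(G) < D_0$, so this reduction is sufficient.

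To realize $L$ in concrete finite abelian groups I would invoke Proposition~\ref{3.2}.3 (Kainrath's theorem): for any infinite abelian group $G_\infty$ there exists $a \in \mathcal B(G_\infty)$ with $\mathsf L(a) = L$, and since $\supp(a)$ is finite, $a$ lies in $\mathcal B(H)$ for the finite subgroup $H = \langle \supp(a) \rangle \le G_\infty$. Because every atom of $\mathcal B(G_\infty)$ dividing $a$ has support inside $\supp(a) \subset H$, one obtains $\mathsf L_{\mathcal B(H)}(a) = \mathsf L_{\mathcal B(G_\infty)}(a) = L$. Specializing $G_\infty$ to the Pr\"ufer group $C_{p^\infty}$ produces an exponent $m_L(p)$ with $L \in \mathcal L(C_{p^{m_L(p)}})$, and specializing it to $\bigoplus_{i \in \N} C_p$ produces a rank $r_L(p)$ with $L \in \mathcal L(C_p^{r_L(p)})$; via the inclusion $\mathcal L(G_0) \subset \mathcal L(G)$ for $G_0 \le G$, both realizations transport to any group containing the corresponding subgroup.

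The core step is a structural case analysis on $G = C_{n_1} \oplus \cdots \oplus C_{n_s}$ with $1 < n_1 \t \cdots \t n_s$. If some prime $p$ satisfies $p^{m_L(p)} \t n_s$, then $C_{p^{m_L(p)}} \le C_{n_s} \le G$ gives $L \in \mathcal L(G)$ immediately. Otherwise $v_p(n_s) < m_L(p)$ for every prime $p$; combined with the auxiliary fact that $L \in \mathcal L(C_p)$ for all but finitely many primes $p$ (so that $m_L(p) = 1$ outside a finite exceptional set $B_L$, and hence $n_s$ has prime divisors only in $B_L$), $n_s$ is uniformly bounded by $\prod_{p \in B_L} p^{m_L(p)}$. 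The inequality $\mathsf D(G) \le |G| \le n_s^s$ then forces $s$ to grow with $\mathsf D(G)$; choosing $D_0$ so that $s \ge \max\{r_L(p) : p \in B_L\}$ gives $C_p^{r_L(p)} \le C_p^s \le C_{n_1}^s \le G$ for any prime $p \t n_1$, finishing the case.

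The main obstacle is the auxiliary fact that $L \in \mathcal L(C_p)$ for all but finitely many primes $p$: without it, $n_s$ in the bad case above could be a squarefree product of arbitrarily many primes and the analysis does not terminate. This auxiliary realization would have to be established by a direct construction in $C_p$ modeled on the proof of Theorem~\ref{3.6}: for a prime $p$ sufficiently large relative to $L$, one builds an element of $\mathcal B(C_p)$ whose set of factorization lengths equals $L$ by combining atoms of prescribed cross numbers over the subset $\{g, 2g, \ldots\}$ of $C_p$ (with $g$ a generator), exploiting the flexibility afforded by atoms of many different lengths when $p$ is large. With this auxiliary realization in hand, the structural case analysis completes and produces the desired threshold $D_0(L)$.
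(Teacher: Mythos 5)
There is a genuine gap, and you have located it yourself: the auxiliary claim that $L \in \mathcal L (C_p)$ for all but finitely many primes $p$ is never proved, and without it your ``bad case'' (where $v_p(n_s) < m_L(p)$ for every prime $p$) does not give a bounded exponent, so the argument does not terminate. The sketch you offer for this claim is not a proof: the construction in the proof of Theorem \ref{3.6} only produces intervals of the form $y+[2k,3k]$ (using the $g$-norm over $\{g,2g\}$), and it contains no mechanism for producing an arbitrary finite set $L$ with prescribed gaps as a set of lengths over a cyclic group; ``combining atoms of prescribed cross numbers'' is exactly the nontrivial point at issue. Nor can the claim be extracted from your use of Proposition \ref{3.2}.3: applying Kainrath's theorem to $C_{p^\infty}$, to $\Q/\Z$, or to an infinite direct sum only realizes $L$ over some finite subgroup generated by the support, so it yields $L \in \mathcal L(C_{p^{m}})$ for one unspecified $m$ per prime, with no uniformity across primes and no way to force $m=1$ outside a finite set.

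The paper fills precisely this hole with a different idea, which also makes the Davenport-threshold reduction and the case analysis on $C_{n_1}\oplus\cdots\oplus C_{n_s}$ unnecessary. One applies Proposition \ref{3.2}.3 to the infinite cyclic group: choose $B=\prod_{i=1}^k m_i\prod_{j=1}^{\ell}(-n_j)\in\mathcal B(\Z)$ with $\mathsf L(B)=L$, set $n_L=n_1+\cdots+n_{\ell}$, and push $B$ forward along the canonical map $f\colon\Z\to\Z/n\Z$ for any $n\ge n_L$. Since every subsequence sum of $B$ lies in an interval of length less than $n$ around $0$, a subsequence of $B$ has sum zero if and only if its image does, so $\mathsf L_{\mathcal B(\Z/n\Z)}(f(B))=L$ and hence $L\in\mathcal L(C_n)$ for \emph{all} $n\ge n_L$. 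This supplies (much more than) your missing fact, and it immediately bounds the exponent of any group $G$ with $L\notin\mathcal L(G)$ by $n_L$ (via $C_{\exp(G)}\le G$ and $\mathcal L(G_0)\subset\mathcal L(G)$); your second realization, $L\in\mathcal L(C_p^{r})$ for $r\ge r_L(p)$, then bounds the $p$-ranks for the finitely many primes $p<n_L$, and the finiteness conclusion follows directly. I would recommend replacing your sketched ``direct construction in $C_p$'' by this $\Z\to\Z/n\Z$ reduction; the rest of your structural argument is then either correct but redundant, or can be kept as an alternative route to the same conclusion.
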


\begin{proof}
We start with the following two assertions.

\smallskip
\begin{enumerate}
\item[{\bf A1.}\,] There is an integer $n_L \in \N$ such that $L \in \mathcal L (C_n)$ for every $n \ge n_L$.

\item[{\bf A2.}\,] For every $p \in \P$ there is an integer $r_{p,L} \in \N$ such that $L \in \mathcal L (C_p^r)$ for every $r \ge r_{p,L}$.
\end{enumerate}

\noindent
{\it Proof of} \,{\bf A1}.\,
By Proposition \ref{3.2}.3, there is some $B = \prod_{i=1}^k m_k \prod_{j=1}^{\ell} (-n_j) \in \mathcal B ( \Z)$ such that $\mathsf L (B)=L$, where $k,\ell, m_1, \ldots, m_k \in \N$ and $n_1, \ldots, n_{\ell} \in  \N_0$. We set $n_L = n_1+ \ldots + n_{\ell}$ and choose some $n \in \N$ with $n \ge n_L$. If $S \in \mathcal F (\Z)$ with $S \t B$ and $f \colon \Z \to \Z/n\Z$ denotes the canonical epimorphism, then $S$ has sum zero if and only if $f (S)$ has sum zero. This implies that $\mathsf L_{\mathcal B (\Z/n\Z)} ( f (B)) = \mathsf L_{\mathcal B ( \Z)} (B) = L$.
\qed[Proof of {\bf A1}]

\medskip
\noindent
{\it Proof of} \,{\bf A2}.\,
Let $p \in \P$ be a prime and let $G_p$ be an infinite dimensional $\F_p$-vector space. By Proposition \ref{3.2}.3, there is some $B_p \in \mathcal B (G_p)$ such that $\mathsf L (B_p)=L$. If $r_{p,L}$ is the rank of $\langle \supp (B_p) \rangle \subset G_p$, then
\[
L = \mathsf L (B_p) \in \mathcal L ( \langle \supp (B_p) \rangle ) \subset \mathcal L (C_p^r) \quad \text{for } \quad r \ge r_{p,L} \,.  \qquad \qed[\text{Proof of} {\bf A2}]
\]

\medskip
Now let $G$ be a finite abelian group such that $L \notin \mathcal L (G)$. Then {\bf A1} implies that $\exp (G) < n_L$, and {\bf A2} implies that $\mathsf r_p (G) < r_{p,L}$ for all primes $p$ with $p \t \exp (G)$. Thus the assertion follows.
 \end{proof}

\medskip
\section{Sets of lengths of transfer Krull monoids over small groups} \label{4}
\medskip

Since the very beginning of factorization theory, invariants controlling the structure of sets of lengths (such as elasticities and sets of distances) have been in the center of interest. Nevertheless, (apart from a couple of trivial cases) the full system of sets of lengths has been written down explicitly only for the following classes of monoids:
\begin{itemize}
\item Numerical monoids generated by arithmetical progressions: see \cite{A-C-H-P07a}.

\item Self-idealizations of principal ideal domains: see \cite[Corollary 4.16]{Ch-Sm13a}, \cite[Remark 4.6]{Ba-Ba-Mc15a}.

\item The ring of integer-valued polynomials over $\Z$: see    \cite{Fr13a}.

\item The systems $\mathcal L (G)$ for infinite abelian groups $G$ and for abelian groups $G$ with $\mathsf D (G) \le 4$: see Propositions \ref{3.2} and \ref{3.3}.
\end{itemize}
The goal of this section is to determine $\mathcal L (G)$ for abelian groups $G$ having Davenport constant $\mathsf D (G) = 5$.
By inequality \eqref{inequality-Davenport} and the subsequent remarks, a finite abelian group $G$ has Davenport constant five if and only if it is isomorphic to one of the following groups:
\[
C_3 \oplus C_3,  \quad C_5, \quad  C_2 \oplus C_4, \quad   C_2^4 \,.
\]
Their systems of sets of lengths are  given in Theorems \ref{4.1}, \ref{4.6}, \ref{4.7}, and \ref{4.8}. We start with a brief analysis of these explicit descriptions (note that they will be needed again in Section \ref{5}; confer the proof of Theorem \ref{5.7}).

By Theorem  \ref{3.5}, we know that $\mathcal L (C_2^4)$ is maximal in $\Omega_5 = \{ \mathcal L (C_5), \mathcal L (C_2 \oplus C_4), \mathcal L (C_3\oplus C_3), \mathcal L (C_2^4) \}$.  Theorems \ref{4.1}, \ref{4.6}, \ref{4.7}, and \ref{4.8} unveil that $\mathcal L (C_3 \oplus C_3)$, $\mathcal L (C_2\oplus C_4)$, and $\mathcal L (C_2^4)$ are maximal in $\Omega_5$, and that $\mathcal L (C_5)$ is contained in $\mathcal L (C_2^4)$, but it is neither contained in $\mathcal L (C_3 \oplus C_3)$ nor in $\mathcal L (C_2 \oplus C_4)$. Furthermore, Theorems \ref{3.5}, \ref{4.6}, and \ref{4.8}  show that $\mathcal L (C_m) \subset \mathcal L (C_2^{m-1})$ for $m \in [2,5]$. It is well-known that, for all $m \ge 4$, $\mathcal L (C_m) \ne \mathcal L (C_2^{m-1})$ (\cite[Corollary 5.3.3]{Ge09a}), and the standing conjecture is that $\mathcal L (C_m) \not\subset \mathcal L (C_2^{m-1})$ holds true for almost  all $m \in \N_{\ge 2}$ (see \cite{Ge-Zh18a}).

\smallskip
The group $C_3 \oplus C_3$ has been handled in \cite[Theorem 4.2]{Ge-Sc16a}.

\medskip
\begin{theorem} \label{4.1}
$\mathcal L (C_3^2) = \{ y+ [2k, 5k] \mid y,k \in \mathbb N_0\} \ \cup \ \{ y +  [2k+1, 5k+2] \mid y \in \N_0,\, k \in \N \} \}$.
\end{theorem}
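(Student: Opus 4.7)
The plan is to prove both inclusions, after first reducing to zero-free sequences. Since $0 \in C_3^2$ is itself an atom (of length $1$) and must appear in every factorization of each of its multiples, one has $\mathsf L(0^y B) = y + \mathsf L(B)$ for all $y \in \mathbb N_0$ and $B \in \mathcal B(C_3^2)$. It therefore suffices to describe $\{\mathsf L(B) : B \in \mathcal B(C_3^2 \setminus \{0\})\}$ and show it equals $\{[2k, 5k] : k \in \mathbb N_0\} \cup \{[2k+1, 5k+2] : k \in \mathbb N\}$.

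For the realization ($\supseteq$), fix a basis $(e_1, e_2)$ of $C_3^2$ and set $U = e_1^2 e_2^2 (e_1+e_2)$, an atom of maximal length $\mathsf D(C_3^2) = 5$. A direct verification gives $\mathsf L(U \cdot (-U)) = [2,5]$: length $2$ via $U \cdot (-U)$ itself; length $5$ via the five length-$2$ atoms $g \cdot (-g)$, one for each element of $\supp(U)$ counted with multiplicity; and lengths $3, 4$ via mixed factorizations using length-$3$ atoms such as $e_1 e_2(-(e_1+e_2))$ and $(-e_1)(-e_2)(e_1+e_2)$ together with suitable length-$4$ atoms like $e_1^2(e_1+e_2)(-e_2)$. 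Taking $k$-th powers yields $\mathsf L((U \cdot (-U))^k) = [2k, 5k]$. For the second family, one constructs an explicit $B'_1 \in \mathcal B(C_3^2 \setminus \{0\})$ of length $14$ or $15$ with $\mathsf L(B'_1) = [3, 7]$ (combining atoms of several length types so that each of the factorization lengths $3, 4, 5, 6, 7$ is realized); then $B'_1 \cdot (U \cdot (-U))^{k-1}$ realizes $[2k+1, 5k+2]$ for $k \in \mathbb N$.

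For the upper bound ($\subseteq$), combine three facts. First, $\Delta(C_3^2) = \{1\}$, so every $L \in \mathcal L(C_3^2)$ is a discrete interval $[m,M]$: this holds because $\max \Delta^*(C_3^2) \le \max\{\exp(C_3^2)-2, \mathsf r(C_3^2)-1\} = 1$ by \cite[Theorem 1.1]{Ge-Zh16a} (as used in the proof of Theorem \ref{3.5}), together with the fact that $\Delta(C_3^2)$ is an interval containing $1$. Second, for $B \in \mathcal B(C_3^2 \setminus \{0\})$ with $|B| = N$, every factorization uses atoms of length in $\{2,3,4,5\}$, so $m \ge \lceil N/5 \rceil$ and $M \le \lfloor N/2 \rfloor$, yielding the elasticity bound $2M \le 5m$. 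Third, one must show that $M - m \not\equiv 2 \pmod 3$.

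The residue constraint is the main obstacle. For a $k$-atom factorization of $B$ with composition $(a_2, a_3, a_4, a_5)$, one has $\sum a_j = k$ and $\sum j a_j = N$, hence $N - 2k = a_3 + 2a_4 + 3a_5$. The admissible variations in $k$ for fixed $N$ are tightly constrained by the classification of atoms of $\mathcal B(C_3^2)$: length-$5$ atoms have the rigid form $g_1^2 g_2^2(g_1+g_2)$ with $g_1, g_2$ linearly independent, and a length-$2$ atom $g \cdot (-g)$ requires both $g$ and $-g$ in $\supp(B)$. A careful case analysis --- tracking how elementary atom exchanges (e.g., replacing the three length-$2$ atoms $g_1(-g_1), g_2(-g_2), (g_1+g_2)(-(g_1+g_2))$ by the two length-$3$ atoms $g_1 g_2(-(g_1+g_2))$ and $(-g_1)(-g_2)(g_1+g_2)$) shift $k$ modulo $3$ --- then rules out the forbidden residue. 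This group-specific combinatorial step is the most technical part and constitutes the core of the theorem.
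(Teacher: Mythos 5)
Your plan breaks down at the step you yourself identify as the core of the argument: the residue constraint $\max L-\min L\not\equiv 2\pmod 3$ is not only left unproved (``a careful case analysis \dots rules out the forbidden residue'' is an assertion, not an argument), it is false. Take the length-$4$ atom $V=e_1^2(e_1+e_2)(-e_2)$ that you mention yourself. Then $|V(-V)|=8<10$ excludes factorizations into five or more atoms, clearly $2,4\in\mathsf L\bigl(V(-V)\bigr)$, and
\[
V(-V)=\bigl(e_1(-e_1)\bigr)\,\bigl(e_1e_2(-(e_1+e_2))\bigr)\,\bigl((-e_1)(-e_2)(e_1+e_2)\bigr)
\]
shows $3\in\mathsf L\bigl(V(-V)\bigr)$, so $\mathsf L\bigl(V(-V)\bigr)=\{2,3,4\}$, an interval of width $2\equiv 2\pmod 3$; likewise $\mathsf L\bigl(g^3(-g)^3\bigr)=\{2,3\}$ has width $1$. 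Neither set is of the form $y+[2k,5k]$ or $y+[2k+1,5k+2]$, so the displayed formula in Theorem \ref{4.1} cannot be read literally: the intended description is the one in the Remark following the theorem, equivalently the formulation from \cite[Theorem 4.2]{Ge-Sc16a} quoted in the proof of Theorem \ref{5.7}, namely that \emph{every} interval $L$ of width $k\ge 2$ with $\min L\ge\lceil 2k/3\rceil$ occurs, together with all singletons and all $[m,m+1]$ with $m\ge 2$. In other words, the only constraints are that sets of lengths are intervals ($\Delta(C_3^2)=\{1\}$) and the elasticity bounds $\rho_{2k}(C_3^2)=5k$, $\rho_{2k+1}(C_3^2)=5k+2$; there is no congruence obstruction, and a proof of one would prove something false. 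Consequently your ``$\subseteq$'' direction cannot be completed, and your ``$\supseteq$'' direction realizes only the two extremal families while missing sets such as $[2,4]$, $[4,6]$ and $\{2,3\}$ that do occur. (Your opening reduction, asserting that the zero-free sets of lengths are exactly the unshifted intervals $[2k,5k]$ and $[2k+1,5k+2]$, is already contradicted by $\mathsf L(e_1^3e_2^3)=\{2\}$.)

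Two further remarks. The paper gives no proof of this theorem -- it simply cites \cite[Theorem 4.2]{Ge-Sc16a} -- so there is no argument to compare routes with; but note that your justification of $\Delta(C_3^2)=\{1\}$ is also incomplete as written, since the chain $\Delta^*\subset\Delta_1\subset\{d_1\mid d_1\ \text{divides some}\ d\in\Delta^*\}$ bounds $\Delta_1(C_3^2)$, not $\Delta(C_3^2)$ (the fact itself is true and should be cited directly). Your realization constructions are sound in spirit: $\mathsf L\bigl(U(-U)\bigr)=[2,5]$ with $U=e_1^2e_2^2(e_1+e_2)$, hence $\mathsf L\bigl((U(-U))^k\bigr)=[2k,5k]$, and a length-$14$ gadget with set of lengths $[3,7]$ does yield $[2k+1,5k+2]$ once one knows all sets of lengths are intervals. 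But a correct proof must in addition realize every shorter interval allowed by the elasticity bounds (in the style of Step 1 and the interval constructions in the proofs of Theorems \ref{4.6} and \ref{4.7}), and for the converse prove only the bound $\max\mathsf L(A)\le\rho_{\min\mathsf L(A)}(C_3^2)$ -- exactly as in CASE 1 of Step 2 of those proofs -- rather than any residue condition.
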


\noindent
{\it Remark.} An equivalent way to describe $\mathcal L (C_3^2)$ is $\{y+\left\lceil\frac{2k}{3}\right\rceil+[0,k]\mid y\in \N_0, k\in \N_{\ge 2} \}\cup\{ \{y\}, y+2+[0,1]\mid y\in \N_0\}$.

The fact that all sets of lengths are intervals is a consequence of the fact $\Delta(C_3^2)=\{1\}$. Of course, each set of lengths $L$ has to fulfill $\rho(L) \le 5/2 = \rho(C_3^2)$. We observe that the description shows that this is the only condition, provided $\min L \ge 2$.
The following lemma is frequently helpful in the remainder of this section.

\medskip
\begin{lemma}\label{4.2}
Let $G$ be a finite abelian group, and let $A\in \mathcal B(G)$.
\begin{enumerate}
\item If $\supp(A)\cup\{0\}$ is a group, then $\mathsf L(A)$ is an interval.

\smallskip
\item If $A_1$ is an atom dividing $A$ with $|A_1|=2$, then $\max \mathsf L(A)=1+\max \mathsf L(AA_1^{-1})$.

\smallskip
\item If $A$ is a product of atoms of length $2$ and if every atom $A_1$ dividing $A$ has length $|A_1|=2$ or $|A_1|=4$, then $\max\mathsf L(A)-1\notin \mathsf L(A)$.
\end{enumerate}

\end{lemma}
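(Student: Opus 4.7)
The plan for Part 2 is a direct exchange argument. The inequality $\max\mathsf L(A)\ge 1+\max\mathsf L(AA_1^{-1})$ is immediate by prepending $A_1$ to a maximum-length factorization of $AA_1^{-1}$. For the reverse, fix a maximum-length factorization $A=U_1\cdots U_k$ and write $A_1=g\cdot(-g)$ (with the convention $g=-g$ when $2g=0$). Either some $U_i=A_1$, in which case $AA_1^{-1}$ inherits a factorization of length $k-1$; or else $g$ and $-g$ sit in distinct atoms $U_i=g\cdot V$ and $U_j=(-g)\cdot W$, since an atom containing both would have $A_1$ as a proper zero-sum subsequence and hence equal $A_1$. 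The product $V\cdot W$ is zero-sum and nonempty, so factoring it into $m$ atoms yields a factorization of $A$ of length $k-1+m$; maximality of $k$ forces $m=1$, making $V\cdot W$ an atom. The new factorization of $A$ has length $k$ and contains $A_1$, reducing to the first case.

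For Part 3, the key observation is that a product of length-$2$ atoms satisfies two symmetries: $\mathsf v_h(A)=\mathsf v_{-h}(A)$ for every $h\in G$, and $\mathsf v_h(A)$ is even for every involution $h$ (i.e.\ $h$ with $2h=0$). Suppose $\max\mathsf L(A)-1\in\mathsf L(A)$; by the hypothesis on atom lengths, the witnessing factorization uses exactly one length-$4$ atom $C$ together with length-$2$ atoms, and applying both symmetries to the complementary product $A\cdot C^{-1}$ shows they are inherited by $C$. A short case analysis by $|\supp(C)|\in\{1,2,3,4\}$ and by how many support elements are involutions leaves only the shapes $h^4$, $h^2k^2$, $h^2(-h)^2$, $h^2k(-k)$, and $h(-h)k(-k)$ (under the appropriate order conditions on $h,k$), and each contains $h\cdot(-h)$ or $h^2$ as a proper zero-sum subsequence, contradicting the atomicity of $C$.

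For Part 1, the plan is induction on $|A|$, the base case $|A|\le 2$ being immediate. Since $H:=\supp(A)\cup\{0\}$ is a subgroup, $-h\in\supp(A)$ whenever $h\in\supp(A)$; hence for any $h\in\supp(A)$ with $h\ne -h$ the length-$2$ atom $A_1:=h\cdot(-h)$ divides $A$, and Part 2 identifies $\max\mathsf L(A)$ with $1+\max\mathsf L(AA_1^{-1})$. To prove intervality it suffices, given factorizations of $A$ of lengths $k$ and $k'>k+1$, to construct an intermediate factorization of length $k+1$: pick an atom $U$ of length $\ge 3$ appearing in the $k$-length factorization, select two of its terms $h_1,h_2$, form $s:=h_1+h_2\in H$ via subgroup closure, and perform a local swap trading $U$ together with an adjacent length-$2$ atom against a pair of shorter atoms built from $s$ and $-s$; this raises the length by one. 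The main obstacle is ensuring $s$ and $-s$ actually appear within $\supp(A)$ (not merely within $H$); this is handled by varying the choice of $U$ and the pair $h_1,h_2$, using the support symmetry and the subgroup structure of $H$ to sidestep any obstruction.
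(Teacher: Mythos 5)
Your arguments for parts 2 and 3 are correct and are essentially the ones the paper uses. For part 2 the paper runs the same exchange: take a longest factorization $A=U_1\cdot\ldots\cdot U_\ell$; either $A_1$ occurs as a factor, or $g$ and $-g$ lie in distinct atoms $U_i,U_j$, and then $U_iU_jA_1^{-1}$ is a nonempty zero-sum sequence, so $1+\max\mathsf L(AA_1^{-1})\ge \ell$ (your extra step showing $VW$ is a single atom is harmless but not needed). For part 3 the paper only states tersely that the unique length-$4$ atom in a factorization of length $\max\mathsf L(A)-1$ "can only be a product of two atoms of length $2$"; your parity/symmetry argument ($\mathsf v_h=\mathsf v_{-h}$, even multiplicity at involutions, inherited by $C=A\cdot(AC^{-1})^{-1}$) together with the short case analysis is exactly the right way to justify that claim, and the counting step forcing exactly one length-$4$ atom is correct.

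Part 1 is where your proposal has a genuine gap. The paper does not prove this statement; it cites \cite[Theorem 7.6.8]{Ge-HK06a}, and that result is not a two-line induction. Your sketch never actually constructs the "local swap": given a factorization of length $k$ with $k+1<\max\mathsf L(A)$, you propose to split an atom $U$ of length $\ge 3$ using $s=h_1+h_2$, but (i) the factorization of length $k$ need not contain any length-$2$ atom "adjacent" to $U$ to trade against; (ii) although $s\in\supp(A)\cup\{0\}$ by the subgroup hypothesis, the occurrences of $s$ and $-s$ in $A$ are already committed to other atoms of the factorization, and rerouting them changes the lengths of those atoms in a way you do not control (it can just as well decrease the total number of factors); and (iii) $s$ may be $0$ or equal to one of the $h_i$, cases your sketch does not address. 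The sentence "this is handled by varying the choice of $U$ and the pair $h_1,h_2$ \ldots to sidestep any obstruction" is precisely the content of the theorem and cannot be waved away: note that part 2, which you do prove, only controls $\max\mathsf L$, not the intermediate lengths. As written, part 1 is an outline with its central step missing; either supply the interpolation argument in full (showing that $k\in\mathsf L(A)$ with $k<\max\mathsf L(A)$ implies $k+1\in\mathsf L(A)$, which requires a genuinely more careful use of the subgroup structure of $\supp(A)\cup\{0\}$) or do as the paper does and invoke \cite[Theorem 7.6.8]{Ge-HK06a}.
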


\begin{proof}
1. See  \cite[Theorem 7.6.8]{Ge-HK06a}.

\smallskip
2. Let $\ell=\max\mathsf L(A)$ and $A=U_1\cdot\ldots\cdot U_{\ell}$, where $U_1,\ldots, U_{\ell}\in \mathcal A(G)$. Let $A_1=g_1g_2$, where $g_1,g_2\in G$. If there exists $i\in [1,\ell]$
 such that $A_1=U_i$, then $\max \mathsf L(A)=1+\max \mathsf L(AA_1^{-1})$. Otherwise there exist distinct $i,j\in [1,\ell]$ such that $g_1\t U_i$ and $g_2\t U_j$. Thus $A_1$ divides $U_iU_j$ and hence $1+\max \mathsf L(AA_1^{-1})\ge \ell$ which implies that $\max \mathsf L(A)=1+\max \mathsf L(AA_1^{-1})$ by the maximality of $\ell$.

\smallskip
 3. If $\max\mathsf L(A)-1\in \mathsf L(A)$, then $A=V_1\cdot\ldots \cdot V_{\max\mathsf L(A)-1}$ with $|V_1|=4$ and $|V_2|=\ldots =|V_{\max\mathsf L(A)-1}|=2$. Thus $V_1$ can only be a product two atoms of length $2$, a contradiction.
  \end{proof}

We now consider the  groups $C_5$, $C_2 \oplus C_4$, and $C_2^4$, each one  in its own subsection.
In the proofs of the forthcoming theorems we will use Proposition \ref{3.3} and Theorem \ref{3.6} without further mention.

\subsection{The system of sets of lengths of $C_5$}

The goal of this subsection is to prove the following result.

\medskip
\begin{theorem} \label{4.6}
$\quad \mathcal L(C_5)=\mathcal L_1\cup \mathcal L_2\cup \mathcal L_3\cup \mathcal L_4\cup \mathcal L_5\cup \mathcal L_6\,,$ where
\begin{align*}
& \mathcal L_1=\{\{y\}\mid y\in \N_0\} \,, \\
&\mathcal L_2=\{y+2+\{0,2\}\mid y\in \N_0\} \,, \\
&\mathcal L_3= \{y+3+\{0,1,3\}\mid y\in \N_0\}\,, \\
&\mathcal L_4=\{y+2k+3 \cdot [0,k]\mid y\in \N_0, k\in \N\}\,,\\
&\mathcal L_5=\{y+2\left\lceil\frac{k}{3}\right\rceil+[0,k]\mid y\in \N_0, k\in \N\setminus\{3\}\}\cup\{y+[3,6]\mid y\in \N_0\}\, , \text{ and } \\
& \mathcal L_6= \{y+2k+3+\{0,2,3\}+3 \cdot [0,k]\mid y, k \in \N_0 \}\,.
\end{align*}
\end{theorem}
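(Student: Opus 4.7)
The plan is to prove both inclusions separately: realization ($\supset$) and exhaustion ($\subset$). For realization, multiplication by $0^y$ shifts every set of lengths by $y$, so I reduce to $y=0$ and exhibit an explicit zero-sum sequence over $C_5^\bullet$ for each shape. The $g$-norm argument in the proof of Theorem~\ref{3.6} handles $\mathcal L_4$ almost verbatim: $B_k = g^{5k}(4g)^{5k}$ decomposes only over the atoms $g^5$, $(4g)^5$, and $(g)(4g)$ (the sole atoms supported on $\{g,4g\}$), and solving the two resulting coordinate equations forces all factorization lengths to have the form $2k+3j$ with $j\in[0,k]$. For the remaining families I would use mixtures such as $g^{5k}(4g)^{5k}(2g)(3g)$ for $\mathcal L_6$, and small-shape products like $g^5(2g)^5$, $g^5(3g)^5$, and $g^3(2g)(3g)(4g)^3$ for $\mathcal L_2$, $\mathcal L_3$, and $\mathcal L_5$.

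For exhaustion, write a given $B \in \mathcal B(C_5)$ with $\mathsf L(B)=L$ as $B = 0^y B'$ with $0 \notin \supp(B')$, so that $L = y + \mathsf L(B')$. Fixing a generator $g$, set $B' = g^{a_1}(2g)^{a_2}(3g)^{a_3}(4g)^{a_4}$. A factorization of $B'$ is a nonnegative integer solution $(n_U)_{U \in \mathcal A(C_5^\bullet)}$ of four coordinate equations, and its length equals $\sum_U n_U$; I would also track the $g$-norm identity $\sum_U \|U\|_g\, n_U = \|B'\|_g$ as a second scalar equation. Since the $14$ atoms of $C_5^\bullet$ have lengths in $[2,5]$ and $g$-norms in $[1,4]$ with very tight coupling between the two (for instance, the unique atom of $g$-norm $4$ is $(4g)^5$), these two equations together with the elasticity bound $\rho(L)\le 5/2$ and the containment $\Delta(C_5) \subset [1,3]$ are the principal tools.

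I would then split by $|\supp(B')|$: the cases $|\supp(B')|\le 1$ give $\mathcal L_1$; after applying the generator-symmetry $g \mapsto \lambda g$ for $\lambda \in (\Z/5\Z)^\times$, the case $|\supp(B')|=2$ reduces to $\supp(B') \subset \{g,4g\}$ or $\supp(B') \subset \{g,2g\}$, each involving only three relevant atoms and yielding exactly $\mathcal L_2$, $\mathcal L_4$, and the small-interval members of $\mathcal L_5$; and the cases $|\supp(B')|\in\{3,4\}$ are the heart of the argument. In these the two fundamental exchange relations $g^5(4g)^5 = ((g)(4g))^5$ (and its $\operatorname{Aut}(C_5)$-translates) and $(g)(4g) \cdot (2g)(3g) = (g)(2g)(3g)(4g)$ produce precisely the jumps of size $3$ that appear in $\mathcal L_4$ and $\mathcal L_6$, and Lemma~\ref{4.2} is the workhorse for controlling $\max \mathsf L(B')$ when a length-$2$ atom divides $B'$ and for excluding $\max \mathsf L(B')-1$ in the ``pure $\{2,4\}$-length'' subcase that is responsible for the exotic shape $\{y+3,y+4,y+6\}$ of $\mathcal L_3$. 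The main obstacle will be the \emph{sharpness} claim: proving that no intermediate length sneaks into the nontrivial sets of $\mathcal L_3$, $\mathcal L_4$, or $\mathcal L_6$; any putative $\ell$ strictly between two prescribed lengths would have to produce a nonnegative atom-multiset satisfying the four coordinate equations of $B'$, and ruling this out is conceptually mechanical via the length/$g$-norm system but requires a separate enumeration for each support pattern, which the $\operatorname{Aut}(C_5)$-symmetry reduces by a factor of four.
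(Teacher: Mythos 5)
Your overall two-step shape (realization plus exhaustion) matches the paper's proof, but several of your concrete choices fail and the exhaustion plan is missing its key ingredient. On the realization side, the sequences you name do not produce the claimed sets: $\mathsf L\big(g^5(2g)^5\big)=[2,3]$ (this is exactly the $k=1$ case of the $g$-norm computation in the proof of Theorem \ref{3.6}), and $g^5(3g)^5$ is its image under the automorphism $x\mapsto 2x$, so neither realizes $\{2,4\}\in\mathcal L_2$ nor $\{3,4,6\}\in\mathcal L_3$; likewise $g^{5k}(4g)^{5k}(2g)(3g)$ admits factorizations such as $\big((2g)(4g)^2\big)\big((3g)g^2\big)\cdot\big(g(4g)\big)^{5k-2}$ and $\big((2g)g^3\big)\big((3g)(4g)^3\big)\cdot\ldots$, which fill in consecutive lengths, so its set of lengths is (essentially) an interval and not the gapped set $2k+3+\{0,2,3\}+3\cdot[0,k]$ of $\mathcal L_6$. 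The paper instead uses $g^5(-g)^3(-2g)$, $g^5(-g)^5g^2(-2g)$, and $g^{5(k+1)}(-g)^{5(k+1)}(2g)g^3$ (the last via Lemma \ref{4.3}.1), and for $\mathcal L_5$ a family of products combined with $g^{5t}(-g)^{5t}$ whose supports generate the full group so that Lemma \ref{4.2}.1 yields intervals; your single sequence $g^3(2g)(3g)(4g)^3$ cannot cover the infinite family $\mathcal L_5$.

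On the exhaustion side, the tools you list -- the coordinate/$g$-norm equations, $\rho(L)\le 5/2$, and $\Delta(C_5)\subset[1,3]$ -- are not sufficient. The bound $\rho(L)\le 5/2$ permits the interval $[2t+1,5t+2]$, which does lie in $\mathcal L(C_2\oplus C_4)$ but not in $\mathcal L(C_5)$; what rules it out, and what forces the exact shift conditions in $\mathcal L_4$ and $\mathcal L_5$, is the sharper fact $\rho_{2k+1}(C_5)=5k+1$ (Lemma \ref{4.3}.4), which your proposal never invokes and which the paper explicitly flags as the feature distinguishing $C_5$ from the other groups with Davenport constant $5$. Moreover, the genuinely hard structural step -- that $3\in\Delta(\mathsf L(A))$ forces $\Delta(\mathsf L(A))=\{3\}$, and that $2\in\Delta(\mathsf L(A))\subset[1,2]$ forces $\mathsf L(A)$ to be $\{y,y+2\}$, $\{y,y+1,y+3\}$, or $3+\{0,2,3\}+(\text{an AP with difference }3)$ -- is exactly what the paper imports as Lemma \ref{4.3}.2--3 from the literature; you defer this to a ``conceptually mechanical'' enumeration over support patterns with unbounded multiplicities, which is precisely the content that needs proof and is not supplied. (Your appeal to Lemma \ref{4.2}.3 for the $\mathcal L_3$ shape also misfires: in $\{y,y+1,y+3\}$ it is $\max-2$, not $\max-1$, that is absent, and the realizing sequences are not products of atoms of lengths $2$ and $4$ only.)
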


We observe that all sets of lengths with many elements are arithmetic multiprogressions with difference $1$ or $3$. Yet, there are none with difference $2$. This is because $\Delta^{*}(C_5) = \{1,3\}$.
Moreover, we point out that the condition for an interval to be a set of lengths is different from that of the other groups with Davenport constant $5$. This is related to the fact that $\rho_{2k+1}(C_5) = 5k+1$, while $\rho_{2k+1}(G) = 5k+2$ for the other groups with  Davenport constant $5$.
Before we start the actual proof, we collect some results on sets of lengths over $C_5$.

\medskip
\begin{lemma}\label{4.3}
Let $G$ be cyclic of order five,  and let $A \in \mathcal B(G)$.
\begin{enumerate}
\item  If $g \in G^{\bullet}$ and $k \in \N_0$, then
\[
\mathsf L \big (g^{5(k+1)} (-g)^{5(k+1)} (2g)g^3 \big)=2k+3+\{0,2,3\}+3 \cdot [0,k] \,.
\]

\smallskip
\item If $2\in \Delta (\mathsf L (A))\subset [1,2]$, then $\mathsf L(A)\in  \{\{y,y+2\}\mid y\ge 2\}\cup \{\{y,y+1,y+3\}\mid y\ge 3\}$ or $\mathsf L(A)=3+\{0,2,3\}+\mathsf L(A')$ where $A'\in \mathcal B(G)$ and $\mathsf L(A')$ is an arithmetical progression of difference $3$.

\smallskip
\item $\Delta(G)=[1,3]$, and  if \ $3\in \Delta (\mathsf L (A))$,  then $\Delta (\mathsf L (A))=\{3\}$.

\smallskip
\item $\rho_{2k+1} (G) = 5k+1$ for all $k \in \N$.
\end{enumerate}

\end{lemma}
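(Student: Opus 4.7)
My plan for assertion (1) is to enumerate the atoms in $\mathcal A(\{g, -g, 2g\})$ that can occur in a factorization of $B = g^{5k+8}(-g)^{5k+5}(2g)$. Since $B$ contains exactly one occurrence of $2g$, each factorization uses exactly one atom involving $2g$. Writing such an atom as $(2g) g^a (-g)^b$ with $2 + a - b \equiv 0 \pmod 5$, $a + b \le 4$, and admitting no $g(-g)$ subsequence, leaves only $U = (2g) g^3$ and $V = (2g)(-g)^2$. After removing $U$ or $V$, what remains lies in $\mathcal B(\{g, -g\})$, whose atoms are precisely $g(-g)$, $g^5$, and $(-g)^5$. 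A straightforward parametric count in each case yields the length sets $2k + 3 + 3 \cdot [0, k+1]$ and $2k + 5 + 3 \cdot [0, k]$, whose union rewrites as $2k + 3 + \{0, 2, 3\} + 3 \cdot [0, k]$.

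For assertion (4), \eqref{inequality-elasticity} gives $5k + 1 \le \rho_{2k+1}(C_5) \le 5k+2$, so the plan is to rule out $5k + 2 \in \mathsf L(A)$ whenever $2k + 1 \in \mathsf L(A)$. Suppose such $A \in \mathcal B(C_5)$ exists. Summing atom lengths over both factorizations forces $|A| \in \{10k+4, 10k+5\}$. In the case $|A| = 10k+4$, the length-$(5k+2)$ factorization consists of length-$2$ atoms only, which forces the symmetry $\mathsf v_h(A) = \mathsf v_{-h}(A)$ for every $h \in G$; simultaneously the length-$(2k+1)$ factorization must consist of $2k$ atoms of length $5$ and one atom $W$ of length $4$. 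The symmetry of $A$ then forces $\mathsf v_{mg}(W) = \mathsf v_{(5-m)g}(W)$ for $m \in [1,4]$, and each of the three resulting multiplicity patterns of total length $4$ produces a zero-sum pair $(mg)((5-m)g)$, so $W$ cannot be an atom. In the case $|A| = 10k+5$, the length-$(2k+1)$ factorization consists of length-$5$ atoms only, so $A = \prod_{m=1}^{4}(mg)^{5 b_m}$, while the length-$(5k+2)$ factorization has exactly one atom $W'$ of length $3$; the same symmetry argument forces $|W'|$ to be even, contradicting $|W'|=3$.

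For assertion (3), the equality $\Delta(C_5) = [1,3]$ follows from standard results: $\Delta(G)$ is an interval starting at $1$ (see \cite{Ge-Yu12b}), the general inequality $\max \Delta(G) \le \mathsf D(G) - 2$ gives $\Delta(C_5) \subset [1, 3]$, and $\mathsf L(g^5(-g)^5) = \{2, 5\}$ witnesses $3 \in \Delta(C_5)$. For the second part I would argue contrapositively: a distance of $3$ between consecutive lengths in $\mathsf L(A)$ forces $A$ to admit the canonical exchange $(mg)^5(-mg)^5 \leftrightarrow [(mg)(-mg)]^5$, and one then shows that any further length in $\mathsf L(A)$ producing a distance in $\{1,2\}$ would require a small-length exchange acting simultaneously with this canonical one; careful book-keeping of how atoms of length at most $5$ can be split and recombined in $\mathcal B(C_5)$ will show that the two exchange types cannot coexist, forcing $\Delta(\mathsf L(A)) \subset \{3\}$.

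Assertion (2) will be the main obstacle. Assuming $\Delta(\mathsf L(A)) \subset [1, 2]$ and $2 \in \Delta(\mathsf L(A))$, the plan is to locate a pair of consecutive lengths $y, y+2$ in $\mathsf L(A)$ and show that such a gap of $2$ can only originate from the local exchange $(2g) g^3 \cdot (-g)^5 \leftrightarrow (2g)(-g)^2 \cdot [g(-g)]^3$ (or a symmetric variant obtained by rescaling the generator), since no two atoms of $\mathcal B(C_5)$ can be merged directly into a pair whose total length differs by $2$. This locates inside $A$ a subsequence of the $B_k$-shape treated in~(1); factoring $A$ as this subsequence times an $A' \in \mathcal B(G)$, one uses (1) to produce the $\{0, 2, 3\}$-pattern and then analyzes $\mathsf L(A')$. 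Either $A'$ is so small that $\mathsf L(A)$ reduces to one of the exceptional three-element sets $\{y, y+2\}$ or $\{y, y+1, y+3\}$, or $\mathsf L(A')$ must itself be an arithmetical progression of difference $3$ (otherwise an intermediate length would split the length-$2$ gap, contradicting $\Delta(\mathsf L(A)) \subset [1,2]$ via part~(3)). The hard part will be the rigorous local classification of all possible ways a distance-$2$ gap can appear, which requires a detailed case analysis of atoms of length at most $5$ in $C_5$ and of the interaction between simultaneous exchanges.
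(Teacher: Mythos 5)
Your arguments for parts (1) and (4) are essentially complete, self-contained proofs (the paper itself only cites \cite[Theorem 5.3.1]{Ge09a} for (4) and the proof of \cite[Lemma 4.5]{Ge-Sc16a} for (1)), although you use two facts silently: that every atom of length $5$ over $C_5$ has the form $h^5$ (this is what lets the symmetry $\mathsf v_h(A)=\mathsf v_{-h}(A)$ descend, modulo $5$ and hence exactly, to the one exceptional atom $W$, resp.\ force $A=\prod_m (mg)^{5b_m}$), and the reduction to $0\nmid A$ (the atom $0$ has length $1$, so your count $|A|\in\{10k+4,10k+5\}$ presupposes $A$ is zero-free). Both are easy to supply, so I regard them as minor omissions rather than gaps.

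The genuine gaps are in part (2) and in the second assertion of part (3). For (3) you claim that a distance $3$ in $\mathsf L(A)$ ``forces $A$ to admit the canonical exchange $(mg)^5(-mg)^5\leftrightarrow((mg)(-mg))^5$'' and that ``careful book-keeping \ldots will show'' that this exchange cannot coexist with a distance from $\{1,2\}$; neither claim is argued, and the first is not obvious, since a distance in $\mathsf L(A)$ is a property of the totality of factorizations of $A$ and need not be witnessed by a single two-atom local move. For (2) the situation is the same but more serious: the assertion that a gap of $2$ ``can only originate from the local exchange $(2g)g^3\cdot(-g)^5\leftrightarrow(2g)(-g)^2\cdot(g(-g))^3$'' is precisely the structural content of the lemma, and you explicitly defer ``the rigorous local classification of all possible ways a distance-$2$ gap can appear'' -- that classification is the proof, and it is missing. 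As written, (2) and the second half of (3) are plans rather than proofs. Note that the paper does not prove these statements either: it quotes the proof of \cite[Lemma 4.5]{Ge-Sc16a} for (1)--(2), and \cite[Theorems 6.7.1 and 6.4.7]{Ge-HK06a} together with \cite[Theorem 3.3]{Ch-Go-Pe14a} for (3). So either carry out the full case analysis over $C_5$ (atoms have length at most $5$, so it is finite but delicate), or invoke those references as the paper does.
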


\begin{proof}
1. and 2. follow from  the proof of \cite[Lemma 4.5]{Ge-Sc16a}.

3. See \cite[Theorems 6.7.1 and 6.4.7]{Ge-HK06a} and \cite[Theorem 3.3]{Ch-Go-Pe14a}.

4. See \cite[Theorem 5.3.1]{Ge09a}.
 \end{proof}

\begin{proof}[Theorem \ref{4.6}]
Let $G$ be cyclic of order five and let $g\in G^{\bullet}$.
We first show that all the specified sets occur as sets of lengths, and then we show that no other sets occur.

\smallskip
\noindent
{\bf Step 1.} We prove that for every $L\in  \mathcal L_2\cup \mathcal L_3\cup \mathcal L_4\cup \mathcal L_5\cup \mathcal L_6$, there exists an $A\in \mathcal B(G)$ such that $L=\mathsf L(A)$. We distinguish five cases.

\smallskip

If  $L=\{y,y+2\}\in \mathcal L_2$ with $y\ge 2$, then we set $A=0^{y-2} g^5 (-g)^3(-2g)$ and obtain that $\mathsf L(A)=y-2+\{2,4\}=L$.

If $L=\{y,y+1,y+3\}\in \mathcal L_3$ with $y\ge 3$, then we set $A=0^{y-3}  g^5 (-g)^5 g^2(-2g)$ and obtain that $\mathsf L(A)=y-3+\{3,4,6\}=\{y,y+1,y+3\}=L$.

If $L=y+2k+3 \cdot [0,k]\in \mathcal L_4$ with $k\in \N$ and $y\in \N_0$, then we set $A=g^{5k}(-g)^{5k}0^y\in \mathcal B(G)$ and hence  $\mathsf L(A)=y+2k+3 \cdot [0,k]=L$.

If $L=y+2k+3+\{0,2,3\}+3 \cdot [0,k]\in \mathcal L_6$ with $k\in \N_0$ and $y\in \N_0$, then we set $A=0^y g^{5(k+1)} (-g)^{5(k+1)} (2g)g^3$ and hence  $\mathsf L(A)=y+2k+3+\{0,2,3\}+3 \cdot [0,k]=L$ by Lemma \ref{4.3}.1.

Now we suppose that $L\in \mathcal L_5$, and we distinguish two subcases. First, if $L=y+[3,6]$ with $y\in \N_0$, then we set $A=0^y (2g(-2g)) g^{5} (-g)^{5}$ and hence $\mathsf L(A)=y+[3, 6]=L$. Second, we assume that $L=y+2\lceil\frac{k}{3}\rceil+[0,k]$ with $y\in \N_0$ and $k\in \N\setminus\{3\}$.

If $k\in \N$ with $k\equiv 0\pmod 3$, then $k\ge 6$ and  by Lemma \ref{4.2}.1 we obtain that
\begin{align*}
\mathsf L \big( 0^y  (2g)^5 (-2g)^5  g^{5t}  (-g)^{5t} \big) = y+[2t+2, 5t+5]=y+2\lceil\frac{k}{3}\rceil+[0,k]=L\,,
\end{align*}
where $k=3t+3$.

If $k\in \N$ with $k\equiv 1\pmod 3$, then  by Lemma \ref{4.2}.1 we obtain that
\[
\mathsf L \big(0^y (2g(-g)^2) (g^2(-2g)) g^{5t} (-g)^{5t}\big) = y+[2t+2, 5t+3]=y+2\lceil\frac{k}{3}\rceil+[0,k]=L\,,
\]
where $k=3t+1$.

If $k\in \N$ with $k\equiv 2\pmod 3$, then  by Lemma \ref{4.2}.1 we obtain that
\[
\mathsf L\big(0^y  (g^3(2g))  ((-g)^3(-2g))  g^{5t}  (-g)^{5t}\big) = y+[2t+2, 5t+4]=y+2\lceil\frac{k}{3}\rceil+[0,k]=L\,,
\]
 where $k=3t+2$.

\medskip
\noindent
{\bf Step 2.} We prove that for every $A\in \mathcal B(G^{\bullet})$, $\mathsf L(A)\in  \mathcal L_2\cup \mathcal L_3\cup \mathcal L_4\cup \mathcal L_5\cup \mathcal L_6$.

Let $A \in \mathcal B (G^{\bullet})$. We may suppose that $\Delta (\mathsf L (A)) \ne \emptyset$. By Lemma \ref{4.3}.3 we distinguish three cases according to the form of the set of distances $\Delta (\mathsf L (A))$.

\smallskip
\noindent
CASE 1: \,  $\Delta (\mathsf L (A))=\{1\}$.

Then $\mathsf L(A)$ is an interval and hence we assume that $\mathsf L(A)=[y,y+k]=y+[0,k]$ where $y\ge 2$ and $k\ge 1$. If $k=3$ and $y=2$, then $\mathsf L(A)=[2,5]$ and hence $\mathsf L(A)=\mathsf L(g^5(-g)^5)=\{2,5\}$, a contradiction. Thus $k=3$ implies that $y\ge 3$ and hence $\mathsf L(A)\in \mathcal L_5$. If $k\le 2$, then we obviously have that $\mathsf L(A)\in \mathcal L_5$. Suppose  that $k\ge 4$.
If $y=2t$ with $t\ge 2$, then $y+k\le 5t$ and hence $y=2t\ge 2\lceil\frac{k}{3}\rceil$ which implies that $\mathsf L(A)\in \mathcal L_5$. If $y=2t+1$ with $t\ge 1$, then $y+k\le 5t+1$ and hence $y=2t+1\ge 1+ 2\lceil\frac{k}{3}\rceil$ which implies that $\mathsf L(A)\in \mathcal L_5$.

\smallskip
\noindent
CASE 2: \, $\Delta (\mathsf L (A))=\{3\}$.

Then $\mathsf L(A)=y+3 \cdot [0,k]$ where $y\ge 2$ and $k\ge 1$. If $y=2t\ge 2$, then $y+3k\le 5t$ and hence $y=2t\ge 2k$ which implies that $\mathsf L(A)\in \mathcal L_4$. If $y=2t+1\ge 3$, then $y+3k\le 5t+1$ and hence $y=2t+1\ge 1+ 2k$ which implies that $\mathsf L(A)\in \mathcal L_4$.

\smallskip
\noindent
CASE 3: \, $2\in \Delta (\mathsf L (A))\subset [1,2]$.

By Lemma \ref{4.3}.2, we infer that either $\mathsf L(A)\in \mathcal L_2\cup \mathcal L_3$ or that $\mathsf L(A)=3+\{0,2,3\}+\mathsf L(A')$, where $A'\in \mathcal B(G)$ and $\mathsf L(A')$ is an arithmetical progression of difference $3$. In  the latter case  we obtain that $\mathsf L(A')=y+2k+3 \cdot [0,k]$, with $y\in \N_0$ and $k\in \N_0$, and hence $\mathsf L(A)=y+2k+3+\{0,2,3\}+3 \cdot [0,k]\in \mathcal L_6$.
 \end{proof}

\subsection{The system of sets of lengths of $C_2 \oplus C_4$}

We establish the following result, giving a complete description of the system of sets of lengths of $C_2 \oplus C_4$.

\medskip
\begin{theorem} \label{4.7}
$\quad \mathcal L(C_2\oplus C_4)=\mathcal L_1\cup \mathcal L_2\cup \mathcal L_3\cup \mathcal L_4\cup \mathcal L_5\,,
$ where
\begin{align*}
\mathcal L_1&=\{\{y\}\mid y\in \N_0 \}\,,\\
\mathcal L_2&=\{y+2\left\lceil\frac{k}{3}\right\rceil+[0,k]\mid y\in \N_0, k\in \N\setminus\{3\} \}\cup \\   &  \quad \quad \{y+[3,6]\mid y\in \N_0,\}\cup\{[2t+1, 5t+2]\mid t\in \N\} \\
&=\{y+\left\lceil\frac{2k}{3}\right\rceil+[0,k]\mid y\in \N_0, k\in \N\setminus\{1,3\} \}\cup \\& \quad \quad \{y+3+[0,3], y+2+[0,1]\mid y\in \N_0\}\,,\\
\mathcal L_3&=\{y+2k+2 \cdot [0,k]\mid y\in \N_0, k\in \N\}\,,\\
\mathcal L_4&= \{y+k+1+(\{0\}\cup[2,k+2])\mid y\in \N_0, k\in \N \text{\rm \  odd}\}\,, \text{ and} \\
\mathcal L_5&= \{y+k+2+([0,k]\cup \{k+2\})\mid y\in \N_0, k\in \N\}\,. \\
\end{align*}
\end{theorem}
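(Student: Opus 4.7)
The plan mirrors the two-step strategy of the proof of Theorem~\ref{4.6}: first realize every $L$ on the right-hand side as $\mathsf L(A)$ for some explicit $A\in\mathcal B(C_2\oplus C_4)$, then show that any $A\in\mathcal B(G^{\bullet})$ with $\Delta(\mathsf L(A))\neq\emptyset$ has $\mathsf L(A)\in\mathcal L_2\cup\mathcal L_3\cup\mathcal L_4\cup\mathcal L_5$ by a case analysis on $\Delta(\mathsf L(A))$.

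The key structural inputs are $\mathsf D(G)=5$, $\exp(G)=4$, and $\mathsf r(G)=2$. From~\eqref{inequality-elasticity} one has $\rho_{2k}(G)=5k$, and from the presence of an element of order $2$ one obtains the sharper odd elasticity $\rho_{2k+1}(G)=5k+2$. Using $\max\Delta^{*}(G)=\max\{\exp(G)-2,\mathsf r(G)-1\}=2$ together with the fact that $\Delta(G)$ is an interval starting at $1$ (cf.\ the proof of Theorem~\ref{3.5}) yields $\Delta(G)=\{1,2\}$, so any nonempty $\Delta(\mathsf L(A))$ is one of $\{1\}$, $\{2\}$, or $\{1,2\}$, matching respectively $\mathcal L_2$, $\mathcal L_3$, and $\mathcal L_4\cup\mathcal L_5$. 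The value $\rho_{2k+1}(G)=5k+2$ (versus $5k+1$ for $C_5$) is precisely what accounts for the extra subfamily $[2t+1,5t+2]$ of $\mathcal L_2$ and for the precise extremal maxima in $\mathcal L_4$ and $\mathcal L_5$.

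For existence, I would fix a basis $(e_1,e_2)$ with $\ord(e_1)=2$, $\ord(e_2)=4$, and construct explicit zero-sum sequences tailored to each family, adapting the constructions from the proof of Theorem~\ref{4.6}. The generic intervals in $\mathcal L_2$ and the arithmetical progressions in $\mathcal L_3$ come from iterated products mixing balanced factors such as $e_2^{4}(-e_2)^{4}$, length-$5$ atoms like $e_1(e_1+e_2)e_2^{3}$, and order-$2$ atoms such as $e_1^{2}$ or $(e_1+2e_2)^{2}$, all padded by $0^{y}$ to provide the shift parameter. The odd-minimum subfamily $[2t+1,5t+2]\subset\mathcal L_2$ requires an odd-sum construction realizing the sharp bound $\rho_{2t+1}(G)=5t+2$. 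For $\mathcal L_4$ and $\mathcal L_5$, Lemma~\ref{4.2}.2 is the main tool: adjoining a single length-$2$ atom to a base whose set of lengths is an interval shifts $\max\mathsf L$ by exactly $1$, which produces the desired one-gap AAMP shapes.

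For the converse, the cases $\Delta(\mathsf L(A))=\{1\}$ and $\Delta(\mathsf L(A))=\{2\}$ are settled by the elasticity bounds, with the exceptional short intervals $k\in\{1,3\}$ in $\mathcal L_2$ requiring individual verification (for instance $[2,5]\notin\mathcal L(G)$, since any $A$ with $\{2,5\}\subseteq\mathsf L(A)$ would force all five length-$2$ atomic factors to be supported on the three order-$2$ elements of $G$, which form a subgroup $\cong C_2^{2}$ of Davenport constant $3$, making length-$5$ atoms on that support impossible). The main obstacle, and the delicate step of the proof, is the mixed case $\Delta(\mathsf L(A))=\{1,2\}$: one must show that the only admissible shapes are those of $\mathcal L_4$ (size-$2$ gap at the low end) and $\mathcal L_5$ (size-$2$ gap at the high end), thereby ruling out any size-$2$ gap strictly interior to $\mathsf L(A)$. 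I would attack this by combining Lemma~\ref{4.2}.3 with a $g$-norm style bookkeeping argument, modeled on the proof of Theorem~\ref{4.6} but carried out over the cyclic summand $\langle e_2\rangle\cong C_4$, to locate where length-$5$ atoms can appear in competing factorizations; here most of the casework will reside.
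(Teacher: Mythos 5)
Your overall division by $\Delta(\mathsf L(A))$ matches the paper's strategy, but several steps you declare settled would in fact fail. First, the case $\Delta(\mathsf L(A))=\{2\}$ is \emph{not} settled by elasticity: $\rho(G)=5/2$ would permit difference-$2$ progressions such as $4+2\cdot[0,3]=\{4,6,8,10\}$, which is not in $\mathcal L_3$ (nor in $\mathcal L_4\cup\mathcal L_5$); what must be proved is the sharper bound $\max L\le 2\min L$ for every $L$ with $\Delta(L)=\{2\}$, and in the paper this comes out of the same atom-level case analysis as the mixed case, not from $\rho(G)$. Second, your argument that $[2,5]\notin\mathcal L(G)$ is wrong as stated: $\{2,5\}\subset\mathsf L(A)$ does occur, e.g.\ $A=U(-U)$ with $U=eg^3(e+g)$ factors as $e^2\cdot\bigl(g(-g)\bigr)^3\cdot(e+g)(e-g)$, so the five length-$2$ atoms need not be squares of order-$2$ elements (atoms $h(-h)$ with $\ord(h)=4$ are available); the correct statement, Lemma \ref{4.4}.1, is that $\{2,5\}\subset\mathsf L(A)$ forces $\mathsf L(A)=\{2,4,5\}$, i.e.\ one must exclude $3$, which requires knowing the length-$5$ atoms explicitly. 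Third, your justification of $\Delta(G)=\{1,2\}$ from $\max\Delta^*(G)=2$ only gives a lower bound, since $\Delta^*(G)\subset\Delta(G)$; the equality $\Delta(C_2\oplus C_4)=[1,2]$ is true but needs the cited results.

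The most serious gap is the case $2\in\Delta(\mathsf L(A))$, which is the bulk of the paper's proof: there it rests on a complete classification of the minimal zero-sum sequences over $C_2\oplus C_4$ (Lemma \ref{lem_zssC2C4}), a minimal-length factorization with a lexicographic maximality condition, half-factoriality of the sets $\{0,g,2g,e+g,e+2g\}$ and $\{0,g,2g,e,e-g\}$ together with cross-number arguments (Lemma \ref{4.4}.3), and a six-fold case distinction on the largest atom. Your proposed substitute, a ``$g$-norm style bookkeeping over the cyclic summand $\langle e_2\rangle$,'' has no obvious meaning here: the $g$-norm is a tool for sequences over a cyclic group, and projecting to $\langle e_2\rangle$ destroys the distinction between atoms (e.g.\ it cannot separate $e^2$ from $(e+2g)^2$ or control atoms involving $e\pm g$), so it cannot by itself deliver the precise minimum constraints built into $\mathcal L_4$ and $\mathcal L_5$ (for instance the inequality $\min\mathsf L(A)\ge 2+\tfrac{y}{2}$ obtained in the paper from the fact that in those configurations every atom dividing $A$ has length $2$ or $4$). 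Likewise the existence half is only gestured at; in particular the family $[2t+1,5t+2]$ and the one-gap sets of $\mathcal L_4,\mathcal L_5$ need explicit products such as $U_1U_3U_4U_2^{k-1}(-U_2)^{k-1}$, $U_1(-U_1)g^{4k}(-g)^{4k}$ and $U_5(-U_5)g^{4k-4}(-g)^{4k-4}$ with their lengths verified. As it stands, the proposal outlines the right skeleton but omits or misargues the steps that carry the actual content.
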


We note that all sets of lengths are arithmetical progressions with difference $2$ or almost arithmetical progressions with difference $1$ and bound $2$.
This is related to the fact that $\Delta ( C_2 \oplus C_4 )= \Delta^{*} ( C_2 \oplus C_4 ) =\{1,2\}$.
We start with a lemma  determining all minimal zero-sum sequences over $C_2 \oplus C_4$.

\medskip
\begin{lemma} \label{lem_zssC2C4}
Let $(e,g)$ be a basis of $G=C_2\oplus C_4$ with $\ord(e)=2$ and $\ord(g)=4$.
Then the minimal zero-sum sequences over $G^{\bullet}$ are given by the following list.
\begin{enumerate}
\item The minimal zero sum sequences of  length $2$ are{\rm \,:}
\begin{align*}
&S_{2}^1=\{e^2, (e+2g)^2\},\\
 &S_2^2=\{(2g)^2\},\\
  &S_2^3=\{g(-g), (e+g)(e-g)\}
\end{align*}
\item The minimal zero sum sequences of  length $3$ are{\rm \,:}
\begin{align*}
&S_3^1=\{e(2g)(e+2g)\}\,,\\
& S_3^2=\{g^2(2g),(-g)^2(2g), (e+g)^2(2g), (e-g)^2(2g)\}\,,\\
 &S_3^3=\{eg(e-g), e(-g)(e+g), (e+2g)g(e+g), (e+2g)(-g)(e-g)\}\,.\\
\end{align*}

\item The minimal zero sum sequences of  length $4$ are{\rm \,:}
\begin{align*}
&S_4^1=\{g^4, (-g)^4, (e+g)^4, (e-g)^4\}\,,\\
 &S_4^2=\{g^2(e+g)^2, (-g)^2(e-g)^2, g^2(e-g)^2, (-g)^2(e+g)^2\}\,,\\
 & S_4^3=\{eg^2(e+2g),e(e+g)^2(e+2g), e(-g)^2(e+2g), e(e-g)^2(e+2g)\}\,,\\
  & S_4^4=\{eg(2g)(e+g),e(-g)(2g)(e-g), (e+2g)g(2g)(e-g), (e+2g)(-g)(2g)(e+g)\}\,.\\
\end{align*}

\item The minimal zero sum sequences of  length $5$ are{\rm \,:}
\begin{align*}
S_5=\{&eg^3(e+g), e(-g)^3(e-g), e(e+g)^3g, e(e-g)^3(-g)\\
&(e+2g)g^3(e-g), (e+2g)(-g)^3(e+g), \\ & (e+2g)(e+g)^3(-g), (e+2g)(e-g)^3g\}\,,\\
 \end{align*}
\end{enumerate}
Moreover,  for each two atoms $W_1,W_2$ in any one of the above sets, there exists a group isomorphism $\phi \colon G \to G$ such that $\phi(W_1)=W_2$.
\end{lemma}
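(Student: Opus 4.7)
The plan is to enumerate minimal zero-sum sequences by length, exploiting the bound $\mathsf{D}(C_2 \oplus C_4) = 5$ from inequality \eqref{inequality-Davenport}, which forces every atom $U \in \mathcal{A}(G^\bullet)$ to satisfy $|U| \in [2,5]$. Writing $G^\bullet = \{e, 2g, e+2g, g, -g, e+g, e-g\}$, I would first note that the three elements $e, 2g, e+2g$ have order $2$ and the four elements $g, -g, e+g, e-g$ have order $4$, so each atom has a well-defined \emph{order type}, namely the multiset of orders of its terms. The atoms of length $2$ are exactly the sequences $h(-h)$ for $h \in G^\bullet$, yielding the five sequences in $S_2^1 \cup S_2^2 \cup S_2^3$.

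For length $3$, the order type must consist of orders summing to $0$ modulo the relevant exponents; the possibilities are $(2,2,2)$, $(4,4,2)$, and $(4,4,4)$. The type $(4,4,4)$ is impossible since three elements of order $4$ sum to something of order $4$ or $2$ but never $0$ in $C_2 \oplus C_4$ (as $g+g+g = 3g \ne 0$, and mixing with $e$-translates gives the same contradiction modulo $2$). For $(2,2,2)$, the only zero-sum is $e + 2g + (e+2g) = 0$, giving $S_3^1$. For $(4,4,2)$, the order-$2$ term is determined by the two order-$4$ terms; removing symmetry gives $S_3^2$ and $S_3^3$, and one checks directly that these are atomic (no two terms sum to $0$). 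For length $4$, I would split on order type into $(4,4,4,4)$, $(4,4,2,2)$, $(4,4,4,2)$, $(2,2,2,2)$; the last is ruled out because the only zero-sum multiset of four order-$2$ elements would repeat $e$ four times or contain $e + 2g + (e+2g)$ as a proper zero-sum; the third type $(4,4,4,2)$ contains a zero-sum of length $3$ by the previous analysis and so yields no atoms; the types $(4,4,4,4)$ and $(4,4,2,2)$ produce $S_4^1, S_4^2$ and $S_4^3, S_4^4$ respectively after eliminating the sequences with a proper zero-sum subsequence. For length $5$, since atoms of maximal length over $C_2 \oplus C_4$ are well studied, I would observe that such an atom must contain an order-$4$ element $h$ with multiplicity $3$: otherwise, applying the relation $2g = -2g$ together with a short exhaustion on order types forces a proper zero-sum. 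Fixing $h^3$ leaves us to choose two further terms $h_1 h_2$ with $h_1 + h_2 = -3h = h$, and a brief enumeration in each of the four choices of $h$ yields the eight sequences listed in $S_5$.

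Finally, for the transitivity statement, I would invoke that $\mathrm{Aut}(C_2 \oplus C_4)$ has order $8$ and acts by sending $g$ to any of the four order-$4$ elements and $e$ to any of the two order-$2$ elements outside $\langle g \rangle$. Within each orbit, I would exhibit an explicit automorphism: for example, the swap $g \leftrightarrow -g$ handles the pair $S_2^3$ and pairs inside $S_3^2$, while $e \mapsto e+2g$ (fixing $g$) handles $S_2^1$ and the $(e+2g)$-variants throughout; composing these with $g \mapsto e+g$ (which also has order $4$ and generates a complementary copy of $C_4$) gives the full orbit in each listed set. A uniform verification reduces to checking that the stabilizer of a chosen representative acts as claimed, which is a finite computation.

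The main obstacle is the bookkeeping for length $4$, where $16$ distinct atoms must be separated from non-atoms and partitioned into four automorphism orbits. I would organize this by fixing the order type and, within each type, fixing the divisor relation to a minimal atom of length $2$ (using Lemma \ref{4.2}.2 to eliminate sequences with a short factor): this reduces the enumeration to a short table and makes the orbit structure transparent.
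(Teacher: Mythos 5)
Your proposal follows essentially the same route as the paper's own (sketched) proof: classify atoms by length and by the multiset of orders and multiplicities of their terms (the paper organizes by the number of order-four elements), settle length $5$ via the observation that such an atom must contain an order-four element with multiplicity three (the paper cites \cite[Theorem 6.6.5]{Ge-HK06a} for the details you carry out directly), and read off the transitivity statement from the explicit lists. The only blemishes are your stated reasons for discarding the length-four types $(2,2,2,2)$ and $(4,4,4,2)$: the correct reasons are that four order-two terms drawn from the three-element set $\{e,2g,e+2g\}$ must repeat some $h$, giving the proper zero-sum $h^2$, and that a sequence with an odd number of order-four terms has odd $g$-coordinate sum and so is not a zero-sum sequence at all; the conclusions are right and the enumeration is unaffected.
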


\begin{proof}
We give a sketch of the proof.

Since a minimal zero-sum sequence of length two is of the form $h(-h)$ for some non-zero element $h\in G$, the list given in 1. follows.

A minimal zero-sum sequence of length three contains either two elements of order four or no element of order four.
If there are two elements of order four, we can have one element of order four with multiplicity two (see $S_3^2$) or two distinct elements of order four that are not the inverse of each other (see $S_3^3$).
If there is no element of order four,  the sequence consists of  three distinct elements of order two (see $S_3^1$).

A minimal zero-sum sequence  of length four contains either four elements of order four or two elements of order four.
If there are two elements of order four, the sequence can contain one element with multiplicity two (see $S_4^3$) or any two distinct elements that are not each other's inverse with multiplicity one (see $S_4^4$).
If there are four elements of order four, the sequence can contain  one element with multiplicity four (see $S_4^1$) or  two elements with multiplicity two (see $S_4^2$).

Since every minimal zero-sum sequence of length five contains an element with multiplicity three, the list given in 4. follows (for details see \cite[Theorem 6.6.5]{Ge-HK06a}).

The existence of the required isomorphism follows immediately from the given description of the sequences.
 \end{proof}

The next lemma collects some basic results on $\mathcal L (C_2 \oplus C_4)$ that will be essential for the proof of Theorem \ref{4.7}.

\medskip
\begin{lemma} \label{4.4}
Let $G=C_2\oplus C_4$, and let $A \in \mathcal B (G)$.
\begin{enumerate}
\item $\Delta(G)=[1,2]$, and if  $\{2,5\}\subset \mathsf L(A)$, then $\mathsf L(A)=\{2,4,5\}$.

\smallskip
\item $\rho_{2k+1} (G) = 5k+2$ for all $k \in \N$.

\smallskip
\item If $(e,g)$ is a basis of $G$ with $\ord(e)=2$ and $\ord(g)=4$, then $\{0,g,2g,e+g,e+2g\}$ and $\{0,g,2g,e,e-g\}$ are half-factorial sets. Furthermore, if $\supp(A)\subset\{e,g,2g,e+g,e+2g\}$ and $\mathsf v_e(A)=1$, then $|\mathsf L(A)|=1$.
\end{enumerate}

\end{lemma}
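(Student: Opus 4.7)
For part~(1), the equality $\Delta(G)=[1,2]$ follows by combining the general facts that $\Delta(G)$ is a nonempty interval with $\min\Delta(G)=1$ and $\max\Delta(G)\le \mathsf D(G)-2=3$ with two inputs: first, $2\in\Delta(G)$ arises already from the cyclic subgroup $\langle g\rangle\cong C_4$ via Proposition~\ref{3.3}.3, and second, $3\notin\Delta(G)$, which I obtain by a case analysis on potential gaps in an $L\in\mathcal L(G)$, using the explicit classification of atoms in Lemma~\ref{lem_zssC2C4} to rule out factorization lengths differing by three. For the second assertion of~(1), note that $\rho_2(G)=\mathsf D(G)=5$ and $\{2,5\}\subset\mathsf L(A)$ together force $|A|=10$, so $A=U_1U_2$ with $|U_1|=|U_2|=5$ while $A$ also admits a factorization into five atoms of length~$2$. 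By Lemma~\ref{lem_zssC2C4}, $U_1,U_2\in S_5$, and requiring $\supp(A)$ to consist of pairs of inverses pins down (up to a group automorphism of $G$) $U_1=eg^3(e+g)$ and $U_2=e(-g)^3(e-g)$, so $A=e^2g^3(-g)^3(e+g)(e-g)$. A direct enumeration of factorizations of this $A$ yields $\mathsf L(A)=\{2,4,5\}$: a length-$4$ factorization is $(eg(e-g))\cdot (e(-g)(e+g))\cdot (g(-g))^2$, while length~$3$ is impossible since no length-$4$ atom of $G$ divides $A$ and the only length-$5$ atom divisors of $A$ are $U_1,U_2$.

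For part~(2), the upper bound $\rho_{2k+1}(G)\le k\mathsf D(G)+\lfloor\mathsf D(G)/2\rfloor=5k+2$ is immediate from \eqref{inequality-elasticity}. For the matching lower bound I will exhibit, for each $k\in\N$, the element
\[
A_k=e^{2k}\,g^{3k}\,(-g)^{3k}\,(e+g)^{k+2}\,(e-g)^{k+2}\in\mathcal B(G),
\]
whose length is $10k+4$ and whose sum equals $(4k+4)e=0$. Two factorizations will certify $\{2k+1,5k+2\}\subset\mathsf L(A_k)$: first, a length-$(5k+2)$ factorization consisting of $k$ copies of $e^2$, $3k$ copies of $g(-g)$, and $k+2$ copies of $(e+g)(e-g)$; second, a length-$(2k+1)$ factorization $A_k=U^{\,k-1}V^{k}U'W$, where $U=eg^3(e+g)$, $V=e(-g)^3(e-g)$, $U'=e(e+g)^3g$ (all in $S_5$), and $W=g^2(e-g)^2\in S_4^2$. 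A direct multiplicity check confirms that these are indeed factorizations of $A_k$.

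For part~(3), I apply the standard cross-number criterion: if every atom $U\in\mathcal A(G_0)$ has cross number $\mathsf k(U)=1$, then $\mathsf k$ is additive on factorizations, so $|\mathsf L(B)|=1$ for all $B\in\mathcal B(G_0)$ and $G_0$ is half-factorial. Parametrizing zero-sum sequences over $\{g,2g,e+g,e+2g\}$ as $g^a(2g)^b(e+g)^c(e+2g)^d$ and imposing minimality yields the explicit atom list
\[
\bigl\{(2g)^2,\ (e+2g)^2,\ g^2(2g),\ (e+g)^2(2g),\ g(e+g)(e+2g),\ g^4,\ (e+g)^4,\ g^2(e+g)^2\bigr\},
\]
each of cross number $1$; an analogous (shorter) enumeration handles $\{g,2g,e,e-g\}$. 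For the last claim, if $\supp(A)\subset\{e,g,2g,e+g,e+2g\}$ and $\mathsf v_e(A)=1$, then in any factorization $A=U_1\cdots U_n$ exactly one atom (say $U_1$) is divisible by $e$, while the remaining $U_j\in\mathcal A(\{g,2g,e+g,e+2g\})$ have $\mathsf k=1$. A finite enumeration of atoms of the form $eT$ with $T\in\mathcal F(\{g,2g,e+g,e+2g\})$ zero-sum free, $\sigma(T)=e$, and no proper subsequence of $T$ summing to $e$, gives the list
\[
\bigl\{e(2g)(e+2g),\ eg(2g)(e+g),\ eg^2(e+2g),\ e(e+g)^2(e+2g),\ eg^3(e+g),\ eg(e+g)^3\bigr\},
\]
each of cross number $3/2$. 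Hence $\mathsf k(A)=3/2+(n-1)$ for any factorization of length $n$, so $n$ is determined by $A$ and $|\mathsf L(A)|=1$.

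The main obstacle will be the case analysis showing $3\notin\Delta(G)$ in part~(1); the other steps reduce to the atom classification of Lemma~\ref{lem_zssC2C4} together with explicit multiplicity and cross-number computations.
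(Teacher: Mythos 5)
Most of your plan is correct and deliberately more self-contained than the paper, which settles large parts of this lemma by citation. For the second claim of (1), your reduction to $A=U(-U)$ with $U=eg^3(e+g)$ (up to automorphism, via the ``moreover'' statement of Lemma~\ref{lem_zssC2C4}) and the direct enumeration are sound: no length-$4$ atom divides $A=e^2g^3(-g)^3(e+g)(e-g)$ and its only length-$5$ atom divisors are $U$ and $-U$, so $3\notin\mathsf L(A)$ and $\mathsf L(A)=\{2,4,5\}$; this is in substance what the paper leaves implicit after citing Lemma~\ref{lem_zssC2C4}. For (2) the paper simply cites \cite[Corollary 5.2]{Ge-Gr-Yu15}, whereas your witness $A_k=e^{2k}g^{3k}(-g)^{3k}(e+g)^{k+2}(e-g)^{k+2}$ works: the multiplicity check for $A_k=U^{k-1}V^kU'W$ is correct, so $\{2k+1,5k+2\}\subset\mathsf L(A_k)$, and the upper bound from \eqref{inequality-elasticity} gives equality. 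For (3) the paper cites \cite[Theorem 6.7.9.1 and Proposition 6.7.3]{Ge-HK06a}; your cross-number argument is complete and correct: your eight atoms over $\{g,2g,e+g,e+2g\}$ all have $\mathsf k=1$ (likewise for the second set), your six atoms containing $e$ exactly once all have $\mathsf k=3/2$, and additivity of $\mathsf k$ then fixes the factorization length, which is the same mechanism the paper invokes, made explicit.

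The genuine gap is the first claim of (1). Your general bounds only give $\Delta(G)\subset[1,3]$ together with $2\in\Delta(G)$; the entire content is $3\notin\Delta(C_2\oplus C_4)$, and for this you offer nothing beyond the promise of ``a case analysis on potential gaps, using the classification of atoms.'' That is not a routine verification: one must show that no $\mathsf L(A)$, for any $A\in\mathcal B(G)$, has two adjacent lengths differing by $3$, an assertion about all zero-sum sequences over $G$ whose scale is comparable to the CASE~2 analysis in the proof of Theorem~\ref{4.7} (and there only the weaker consequences of $2\in\Delta(\mathsf L(A))$ are extracted). You indicate no reduction, no intermediate invariant (for instance a catenary-degree bound $\mathsf c(G)\le 4$, which would yield $\max\Delta(G)\le 2$), and no case structure, so this step is missing rather than merely deferred. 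The paper avoids it entirely by quoting $\Delta(C_2\oplus C_4)=[1,2]$ from \cite[Theorem 6.7.1 and Corollary 6.4.8]{Ge-HK06a}; you should either cite likewise or actually supply that argument.
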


\begin{proof}
1.   The first assertion follows from  \cite[Theorem 6.7.1 and Corollary 6.4.8]{Ge-HK06a}. Let $A \in \mathcal B (G)$ with $\{2,5\}\subset \mathsf L(A)$. Then there is an $U \in \mathcal A (G)$ of length $|U|=5$ such that $A = (-U)U$. By Lemma \ref{lem_zssC2C4} there is a basis $(e,g)$ of $G$ with $\ord (e)=2$ and $\ord (g)=4$ such that $U = eg^3(e+g)$. This implies that   $\mathsf L(A)=\{2,4,5\}$.

\smallskip
2. See \cite[Corollary 5.2]{Ge-Gr-Yu15}.

\smallskip
3. See \cite[Theorem 6.7.9.1]{Ge-HK06a}  for the first statement. Suppose that $\supp(A)\subset\{e,g,2g,e+g,e+2g\}$ and $\mathsf v_e(A)=1$. Then for every atom $W$ dividing $A$ with $e\t W$, we have that $\mathsf k(W)=\frac{3}{2}$. Since $\supp(AW^{-1})$ is half-factorial, we obtain that $\mathsf L(AW^{-1})=\{\mathsf k(A)-3/2\}$ by \cite[Proposition 6.7.3]{Ge-HK06a} which implies that $\mathsf L(A)=\{1+\mathsf k(A)-3/2\}=\{\mathsf k(A)-1/2\}$.
 \end{proof}

\begin{proof}[Theorem \ref{4.7}]
Let $(e,g)$ be a basis of $G=C_2\oplus C_4$ with $\ord(e)=2$ and $\ord(g)=4$.
We start by collecting some basic constructions that will be useful. Then, we show that all the sets in the result actually are sets of lengths.
Finally, we show there are no other sets of lengths.

\medskip
\noindent
{\bf Step 0.}  Some elementary constructions.

 Let $U_1=eg^3(e+g)$, $U_2=(e+2g)(e+g)^3(-g)$, $U_3=e(e-g)^3(-g)$,  $U_4=(-g)^2(e+g)^2$, and $U_5=e(e+2g)g^2$.
 Then it is not hard to check that
\begin{align}
&\mathsf L(U_1(-U_1))=\mathsf L(U_2(-U_2))=\{2,4,5\},&&\nonumber\\
&\mathsf L(U_1U_3))=[2,4],  \quad &&\mathsf L(U_1(-U_4))=[2,3]\,,\nonumber\\
&\mathsf L(U_1U_3U_4)=[3,7], \quad &&\mathsf L(U_1(-U_1)U_2(-U_2))=[4,10]\,,\nonumber\\
&\mathsf L(U_5^2(-g)^4)=\{3,4,6\}, \quad &&\mathsf L(U_5(-U_5)g^4(-g)^4)=\{4,5,6,8\}\,,\nonumber \\
 \label{eq1}&\mathsf L(U_1(-U_1)(e+2g)^2)=[3,6]\,.&&
\end{align}
Based on these results, we can obtain the sets of lengths of more complex zero-sum sequences. Let $k \in \N$.

Since $[2k+2,4k+5]\supset \mathsf L(U_1(-U_1)g^{4k}(-g)^{4k})\supset \mathsf L(U_1(-U_1))+\mathsf L(g^{4k}(-g)^{4k})=2k+2+(\{0\}\cup[2,2k+3])$ and $2k+3\notin \mathsf L(U_1(-U_1)g^{4k}(-g)^{4k})$, we obtain that \begin{equation}
\label{eq2}\mathsf L(U_1(-U_1)g^{4k}(-g)^{4k})=2k+2+(\{0\}\cup[2,2k+3])\,.
\end{equation}

Since $[2(k+1), 5(k+1)]\supset \mathsf L(U_1(-U_1)U_2^k(-U_2)^k)\supset \mathsf L(U_1(-U_1)U_2(-U_2))+\mathsf L(U_2^{k-1}(-U_2)^{k-1})=[2(k+1), 5(k+1)]$, we obtain that \begin{equation}
\label{eq3} \mathsf L(U_1(-U_1)U_2^k(-U_2)^k)=[2(k+1), 5(k+1)]\,.\\
\end{equation}

Since $[2(k+1), 5(k+1)-1]\supset \mathsf L(U_1U_3U_2^k(-U_2)^k)\supset \mathsf L(U_1U_3)+\mathsf L(U_2^{k}(-U_2)^{k})=[2(k+1), 5(k+1)-1]$, we obtain that
\begin{equation}
\label{eq4} \mathsf L(U_1U_3U_2^k(-U_2)^k)=[2(k+1), 5(k+1)-1]\,.\\
\end{equation}

Since $[2(k+1), 5(k+1)-2]\supset \mathsf L(U_1(-U_4)U_2^k(-U_2)^k)\supset \mathsf L(U_1(-U_4))+\mathsf L(U_2^{k}(-U_2)^{k})$ and $\mathsf L(U_1(-U_4))+\mathsf L(U_2^{k}(-U_2)^{k})=[2(k+1), 5(k+1)-2]$, we obtain that
\begin{equation}
\label{eq5} \mathsf L(U_1(-U_4)U_2^k(-U_2)^k)=[2(k+1), 5(k+1)-2]\,.\\
\end{equation}

Since
\[ [2k+1, 5k+2]\supset \mathsf L(U_1U_3U_4U_2^{k-1}(-U_2)^{k-1})\supset \mathsf L(U_1U_3U_4)+\mathsf L(U_2^{k-1}(-U_2)^{k-1})\]
and $\mathsf L(U_1U_3U_4)+\mathsf L(U_2^{k-1}(-U_2)^{k-1})=[2k+1, 5k+2]$, we obtain that \begin{equation}
\label{eq6} \mathsf L(U_1U_3U_4U_2^{k-1}(-U_2)^{k-1})=[2k+1, 5k+2]\,.\\
\end{equation}

 Since
\[
[2k+1, 4k+2]\supset \mathsf L(U_5^2(-g)^4g^{4k-4}(-g)^{4k-4})\supset \mathsf L(U_5^2(-g)^4)+\mathsf L(g^{4k-4}(-g)^{4k-4}),
\]
$\mathsf L(U_5^2(-g)^4)+\mathsf L(g^{4k-4}(-g)^{4k-4})=[2k+1, 4k]\cup\{4k+2\}$
 and \[4k+1\notin \mathsf L(U_5^2(-g)^4g^{4k-4}(-g)^{4k-4})\] by Lemma \ref{4.2}.3, we obtain that \begin{equation}
\label{eq7} \mathsf L(U_5^2(-g)^{4}g^{4k-4}(-g)^{4k-4})=[2k+1, 4k]\cup \{4k+2\}\,. \\
\end{equation}

Suppose that $k \ge 2$.
Since
\[
[2k, 4k]\supset \mathsf L(U_5(-U_5)g^{4k-4}(-g)^{4k-4})\supset \mathsf L(U_5(-U_5)g^4(-g)^4)+\mathsf L(g^{4k-8}(-g)^{4k-8})\, ,
\]
$\mathsf L(U_5(-U_5)g^4(-g)^4)+\mathsf L(g^{4k-8}(-g)^{4k-8})=[2k, 4k-2]\cup\{4k\}$,
and \[4k-1\notin \mathsf L(U_5(-U_5)g^{4k-4}(-g)^{4k-4})\] by Lemma \ref{4.2}.3, we obtain that \begin{equation}
\label{eq8} \mathsf L(U_5(-U_5)g^{4k-4}(-g)^{4k-4})=[2k, 4k-2]\cup \{4k\} \,.
\end{equation}

\medskip
\noindent
{\bf Step 1.} We prove that for every $L\in  \mathcal L_2\cup \mathcal L_3\cup \mathcal L_4\cup \mathcal L_5$ there exists an $A\in \mathcal B(G)$ such that $L=\mathsf L(A)$.

We distinguish four cases.

\smallskip
First we suppose that $L\in \mathcal L_2$, and we distinguish several subcases.  If $L=y+[3,6]$ with $y\in\N_0$, then we set $A=0^yU_1(-U_1)(e+2g)^2$ and hence $\mathsf L(A)=y+[3,6]=L$ by  Equation \eqref{eq1}. If $L=[2k+1,5k+2]$ with $k\in \N$, then we set $A=U_1U_3U_4U_2^{k-1}(-U_2)^{k-1}$ and hence $\mathsf L(A)=L$ by Equation  \eqref{eq6}. Now we assume that $L=y+2\lceil\frac{k}{3}\rceil+[0,k]$ with $y\in \N_0$ and $k\in \N\setminus\{3\}$.

If  $k\equiv 0\pmod 3$, then $k\ge 6$ and  by Equation \eqref{eq3} we infer that
\[
\mathsf L\big(0^yU_1(-U_1)U_2^t(-U_2)^t\big)=y+[2t+2, 5t+5]=y+2\lceil\frac{k}{3}\rceil+[0,k]=L\,, \text{ where $k=3t+3$}\,.
\]
If  $k\equiv 1\pmod 3$, then by Equation \eqref{eq5} we infer that
\[
\mathsf L\big(0^{y}U_1(-U_4)U_2^{t}(-U_2)^{t}\big)=y+[2t+2, 5t+3]=y+2\lceil\frac{k}{3}\rceil+[0,k]=L\,, \text{ where $k=3t+1$}\,.
\]
If  $k\equiv 2\pmod 3$, then by Equation \eqref{eq4} we infer that
\[
\mathsf L\big(0^yU_1U_3U_2^t(-U_2)^t\big)=y+[2t+2, 5t+4]=y+2\lceil\frac{k}{3}\rceil+[0,k]=L\,, \text{ where $k=3t+2$}\,.
\]

\smallskip
If $L=y+2k+2 \cdot [0,k]\in \mathcal L_3$ with $y\in \N_0$ and $k\in \N$, then we set $A=0^yg^{4k}(-g)^{4k}$ and hence $\mathsf L(A)=L$.

\smallskip
If $L=y+2t+2+(\{0\}\cup[2,2t+3])\in \mathcal L_4$ with $y,t\in \N_0$, then we set  $A=0^y  U_1 (-U_1) g^{4t} (-g)^{4t}$ and obtain that $\mathsf L(A)=y+2t+2+(\{0\}\cup[2,2t+3])=L$ by Equation \eqref{eq2}.

\smallskip
Finally we suppose that $L=y+k+([0,k-2]\cup \{k\})\in \mathcal L_5$ with $k\ge 3$ and $y\in \N_0$, and we distinguish two subcases. If $k=2t$ with $t\ge 2$, then we set $A=0^y U_5 (-U_5) g^{4t-4} (-g)^{4t-4}$ and hence $\mathsf L(A)=y+k+([0,k-2]\cup \{k\})=L$ by Equation \eqref{eq8}. If $k=2t+1$ with $t\ge 1$, then we set $A=0^y U_5^2 (-g)^4 g^{4t-4} (-g)^{4t-4}$ and hence $\mathsf L(A)=y+k+([0,k-2]\cup \{k\})=L$ by Equation \eqref{eq7}.

\medskip
\noindent
{\bf Step 2.} We prove that for every $A\in \mathcal B(G^{\bullet})$, $\mathsf L(A)\in  \mathcal L_2\cup \mathcal L_3\cup \mathcal L_4\cup \mathcal L_5$.

Let $A \in \mathcal B (G^{\bullet})$. We may suppose that $\Delta (\mathsf L (A)) \ne \emptyset$. By Lemma \ref{4.4}.1 we have to distinguish two cases.

\smallskip
\noindent
CASE 1: \,  $\Delta (\mathsf L (A))=\{1\}$.

Then $\mathsf L(A)$ is an interval, say $\mathsf L(A)=[y,y+k]=y+[0,k]$ with $y\ge 2$ and $k\ge 1$. If $k=3$ and $y=2$, then $\mathsf L(A)=[2,5]$, a contradiction to Lemma \ref{4.4}.1. Thus $k=3$ implies  that $y\ge 3$ and hence $\mathsf L(A)\in \mathcal L_2$. If $k\le 2$, then obviously $\mathsf L(A)\in \mathcal L_2$. Suppose that $k\ge 4$.
If $y=2t$ with $t\ge 2$, then $y+k\le 5t$ and hence $y=2t\ge 2\lceil\frac{k}{3}\rceil$ which implies that $\mathsf L(A)\in \mathcal L_2$. Suppose that $y=2t+1$ with $t\in \N$. If $y+k\le 5t+1$, then $y=2t+1\ge 1+ 2\lceil\frac{k}{3}\rceil$ which implies that $\mathsf L(A)\in \mathcal L_2$. Otherwise $y+k=5t+2$ and hence $\mathsf L(A)=[2t+1,5t+2]\in \mathcal L_2$.

\smallskip
\noindent
CASE 2: \,   $2\in \Delta (\mathsf L (A))\subset [1,2]$.

We freely use the classification of minimal zero-sum sequence given in Lemma \ref{lem_zssC2C4}.
Since $2 \in \Delta (\mathsf L (A))$, there are $k \in \N$ and $U_1,\ldots, U_k, V_1,\ldots, V_{k+2} \in \mathcal A (G)$ with $|U_1|\ge |U_2|\ge \ldots \ge |U_k|$ such that
\[
A=U_1\cdot\ldots\cdot U_k=V_1\cdot\ldots\cdot V_{k+2} \quad \text{and} \quad k+1\not\in \mathsf L(A) \,,
\]
and we may suppose  that $k$ is minimal with this property. Then $[\min \mathsf L(A), k]\in \mathsf L(A)$  and there exists $k_0\in [2,k]$ such that $|U_i|\ge 3$ for every $i\in [1,{k_0}]$ and $|U_i|=2$ for every $i\in [k_0+1,k]$. We continue with two simple assertions.

\smallskip
\begin{enumerate}
\item[{\bf A1.}\,] For each two distinct $i,j\in [1,k_0]$, we have that $3\not\in \mathsf L(U_iU_j)$.

\item[{\bf A2.}\,] $|\mathsf L(U_1\cdot\ldots\cdot U_{k_0})|\ge 2$.
\end{enumerate}

\noindent
{\it Proof of} \,{\bf A1}.\, Assume to the contrary that there exist distinct $i,j\in [1,k_0]$ such that $3\in \mathsf L(U_iU_j)$.  This implies that  $k+1\in \mathsf L(A)$, a contradiction. \qed[Proof of {\bf A1}]

\smallskip
\noindent
{\it Proof of} \,{\bf A2}.\, Assume to the contrary that  $|\mathsf L(U_1\cdot\ldots\cdot U_{k_0})|=1$. Then   Lemma \ref{4.2}.2 implies that $\max \mathsf L(A)=\max \mathsf L(U_1\cdot\ldots\cdot U_{k_0})+k-k_0=k$, a contradiction.
\qed[Proof of {\bf A2}]

\medskip
We  use {\bf A1} and {\bf A2} without further mention and freely use  Lemma \ref{lem_zssC2C4} together with  all its notation. We distinguish six subcases.

\medskip
\noindent
CASE 2.1: \, $U_1\in S_5$.

Without loss of generality, we may assume that $U_1=eg^3(e+g)$.  We choose $j\in [2,k_0]$ and start with some preliminary observations.
If $|U_j|=5$, then the fact that $3\not\in \mathsf L(U_1U_j)$ implies  that $U_j=-U_1$.
If $|U_j|=4$, then $3\not\in \mathsf L(U_1U_j)$ implies  that  $U_j\in \{g^2(e+g)^2, g^4, (-g)^4,  (e+g)^4\}$.
If $|U_j|=3$, then $3\not\in \mathsf L(U_1U_j)$ implies  that $U_j\in \{(e+2g)g(e+g), g^2(2g),  (e+g)^2(2g)\}$.

Now we distinguish three cases.

Suppose that  $|U_2|=5$.
Then $U_2=-U_1$ and by symmetry we obtain that $U_j\in \{g^4,(-g)^4\}$ for every $j\in [3,k_0]$. Let $i\in [k_0+1, k]$. If $U_i\neq e^2$, then $4\in U_1U_2U_i$ and hence $k+1\in \mathsf L(A)$, a contradiction. Therefore we obtain that
\[
A=U_1(-U_1)(g^4)^{k_1}((-g)^4)^{k_2}(e^2)^{k_3} \quad \text{ where} \quad  k_1,k_2,k_3\in \N_0\,,
\]
and without loss of generality we may assume that $k_1\ge k_2$. Then it follows that $\mathsf L(A)$ is equal to
\[
k_1-k_2+k_3+\mathsf L(U_1(-U_1)(g^4)^{k_2}((-g)^4)^{k_2})=k_3+k_1-k_2+2k_2+2+(\{0\}\cup[2,2k_2+3]),
\]
which is an element of $\mathcal L_4$.

Suppose that $|U_2|=4$ and there exists $j\in [2,k_0]$ such that $U_j=(-g)^4$, say $j=2$. Let $i\in [3,k_0]$. If $U_i\in \{g^2(e+g)^2, g^2(2g)\}$, then $3\in \mathsf L(U_2U_i)$ and hence $k+1\in \mathsf L(A)$, a contradiction. If $U_i\in \{(e+g)^4, (e+g)^2(2g), (e+2g)g(e+g)\}$, then $4\in \mathsf L(U_1U_2U_i)$ and hence $k+1\in \mathsf L(A)$, a contradiction.
Therefore $U_i\in \{g^4,(-g)^4\}$.
Let $\tau \in [k_0+1,k]$. If $U_{\tau}\in \{(e+2g)^2, (2g)^2, (e+g)(e-g)\}$, then $4\in \mathsf L(U_1U_2U_{\tau})$ and hence $k+1\in \mathsf L(A)$, a contradiction. Therefore $U_{\tau}\in \{e^2, g(-g)\}$.
Therefore we obtain that
\[
A=U_1(g^4)^{k_1}((-g)^4)^{k_2}(g(-g))^{k_3}(e^2)^{k_4} \quad \text{ where} \quad k_1,k_3,k_4\in \N_0 \text{ and }k_2\in \N
\]
and hence $\mathsf L(A)$ is equal to
\[\mathsf L((g^4)^{k_1+1}((-g)^4)^{k_2}(g(-g))^{k_3}(e^2)^{k_4})=k_4+\mathsf L(g^{4k_1+4+k_3}(-g)^{4k_2+k_3})\,
\]
which is in $\mathcal L_3$

Suppose that $|U_2|\le 4$ and for every $j\in [2,k_0]$, we have $U_j\neq (-g)^4$. Then $U_j\in \{g^2(e+g)^2, g^4,   (e+g)^4, (e+2g)g(e+g), g^2(2g),  (e+g)^2(2g) \}$.  Since $\supp(U_1\cdot\ldots\cdot U_{k_0})\subset\{e,g,2g, e+g,e+2g\}$ and $\mathsf v_e(U_1\cdot\ldots\cdot U_{k_0})=1$,   Lemma \ref{4.4}.3 implies that  $|\mathsf L(U_1\cdot \ldots\cdot U_{k_0})|=1$, a contradiction.

\medskip
\noindent
CASE  2.2: \, $U_1\in S_4^4$.

Without loss of generality, we may assume that $U_1=eg(2g)(e+g)$. Let $j\in [2,k_0]$.

Suppose that $|U_j|=4$. Since  $3\not\in \mathsf L(U_1U_j)$, we obtain that  $U_j\in \{g^2(e+g)^2, g^4,(e+g)^4\}$. Thus  $U_1U_j=W_1W_2$ with $|W_1|=5$, where $W_1,W_2$ are atoms and hence we are back to CASE 2.1.

Suppose that $|U_j|=3$.  Since $3\not\in \mathsf L(U_1U_j)$, we obtain that   $U_j\in \{(e+2g)g(e+g), g^2(2g), (e+g)^2(2g)\}$. If $U_j\in \{g^2(2g), (e+g)^2(2g)\}$, then $U_1U_j=W_1W_2$ with $|W_1|=5$, where $W_1,W_2$ are atoms and hence we are back to CASE 2.1.  Thus it remains to consider the case where $U_j=(e+2g)g(e+g)$.

Therefore we have
\[
U_1\cdot\ldots\cdot U_{k_0}=U_1\big((e+2g)g(e+g)\big)^{k_1} \quad \text{ where} \quad  k_1\in\N \,.
\]
Since $\supp(U_1\cdot\ldots\cdot U_{k_0})\subset\{e,g,2g, e+g,e+2g\}$ and $\mathsf v_e(U_1\cdot\ldots\cdot U_{k_0})=1$,   Lemma \ref{4.4}.3 implies that  $|\mathsf L(U_1\cdot \ldots\cdot U_{k_0})|=1$, a contradiction.

\medskip
\noindent
CASE 2.3: \, $U_1\in S_4^3$ and  for every $i\in [2,k_0]$, we have  $U_i\not\in S_4^4$.

Without loss of generality, we may assume that $U_1=eg^2(e+2g)$. Let $j\in [2,k_0]$.

Suppose that $|U_j|=4$. Since $3\not\in \mathsf L(U_1U_j)$, we obtain that  $U_j\in  \{-U_1, g^2(e+g)^2, g^2(e-g)^2,(e+g)^4, (e-g)^4, g^4\}$.    If $U_j\in \{g^2(e+g)^2, g^2(e-g)^2,(e+g)^4, (e-g)^4\}$, then $U_1U_j=W_1W_2$ with $|W_1|=5$, where $W_1,W_2$ are atoms and hence we are back to  CASE 2.1. Thus it remains to consider the cases where $U_j=-U_1$ or $U_j=g^4$.

Suppose that $|U_j|=3$.  Since $3\not\in \mathsf L(U_1U_j)$, we obtain that   $U_j\in \{eg(e-g), (e+2g)g(e+g), g^2(2g), (e+g)^2(2g), (e-g)^2(2g)\}$. If $U_j\in \{eg(e-g), (e+2g)g(e+g)\}$, then $U_1U_j=W_1W_2$ with $|W_1|=5$, where $W_1,W_2$ are atoms and hence we are back to  CASE 2.1. If $U_j\in \{(e+g)^2(2g), (e-g)^2(2g)\}$, then  $U_1U_j=W_1W_2$  with $W_1\in S_4^4$, where $W_1,W_2$ are atoms and hence we are back to CASE 2.2. Thus it remains to consider the case where $U_j=g^2(2g)$.

If $U_i\neq -U_1$ for every $i\in [2,k_0]$, then $U_1\cdot\ldots\cdot U_{k_0}=U_1(g^4)^{k_1}(g^2(2g))^{k_2}$ where $k_1,k_2\in \N_0$. Since $\supp(U_1\cdot\ldots\cdot U_{k_0})\subset\{e,g,2g, e+g,e+2g\}$ and $\mathsf v_e(U_1\cdot\ldots\cdot U_{k_0})=1$,   Lemma \ref{4.4}.3 implies that  $|\mathsf L(U_1\cdot \ldots\cdot U_{k_0})|=1$, a contradiction.
Thus there exists some $i\in [2,k_0]$, say $i=2$, such that $U_2=-U_1$. By symmetry we obtain that $k_0=2$. Let $\tau\in [3,k]$. If $U_{\tau}\in \{(2g)^2, (e+g)(e-g)\}$, then $4\in \mathsf L(U_1U_2U_{\tau})$ and hence $k+1\in \mathsf L(A)$, a contradiction. Therefore
 $A=U_1(-U_1)(e^2)^{k_1}((e+2g)^2)^{k_2}(g(-g))^{k_3}$ where
 $k_1,k_2,k_3\in\N_0$. Since $[\min \mathsf L(A), 2+k_1+k_2+k_3]\subset \mathsf L(A)$, we obtain that  $\mathsf L(A)=[\min \mathsf L(A), 2+y]\cup\{4+y\}$ where $y=k_1+k_2+k_3\in\N_0$. For every atom $V$ dividing $A$, we have that $|V|=2$ or $|V|=4$.  Thus $\min \mathsf L(A)\ge 2+\frac{y}{2}$ which implies that $\mathsf L(A)\in \mathcal L_5$.

\medskip
\noindent
CASE 2.4: \, $U_1\in S_4^2$ and for every $i\in [2,k_0]$, we have   $U_i\not\in S_4^4\cup S_4^3$.

Without loss of generality, we may assume that $U_1=g^2(e+g)^2$. Let $j\in [2,k_0]$.

Suppose that $|U_j|=4$. If $U_j\in  \{g^2(e-g)^2, (-g)^2(e+g)^2, (-g)^4, (e-g)^4\}$,  then $3\in \mathsf L(U_1U_j)$, a contradiction.  Thus $U_j\in \{U_1, -U_1, g^4, (e+g)^4\}$.

Suppose that $|U_j|=3$.   If $U_j\in \{(e+2g)(-g)(e-g), (-g)^2(2g), (e-g)^2(2g)\}$, then $3\in \mathsf L(U_1U_j)$, a contradiction.  If $U_j\in \{eg(e-g), e(-g)(e+g)\}$, then $U_1U_j=W_1W_2$ with $|W_1|=5$, where $W_1,W_2$ are atoms and hence we are back to  CASE 2.1. If $U_j=e(2g)(e+2g)$, then  $U_1U_j=\big(e(e+g)g(2g)\big) \big(g(e+g)(e+2g)\big)$ and $e(e+g)g(e+2g)\in S_4^4$,  going back to  CASE 2.2.  Thus it remains to consider the case where $U_j=g^2(2g)$ or $U_j= (e+g)^2(2g)$.

If $U_i\neq -U_1$ for every $i\in [2,k_0]$, then $\supp(U_1\cdot\ldots\cdot U_{k_0})\subset \{g,2g, e+g,e+2g\}$ is half-factorial by Lemma \ref{4.4}.3, a contradiction.
Thus there exists some $i\in [2,k_0]$, say $i=2$, such that $U_2=-U_1$. By symmetry we obtain that $\{U_1, \ldots, U_{k_0}\}=\{U_1,-U_1\}$. Let $\tau\in [k_0+1, k]$. If $U_{\tau}\in \{e^2, (2g)^2, (e+2g)^2\}$, then $4\in \mathsf L(U_1U_2U_{\tau})$ and $k+1\in \mathsf L(U_1U_2U_{\tau})$, a contradiction. Therefore $A=U_1^{k_1}(-U_1)^{k_2}(g(-g))^{k_3}((e+g)(e-g))^{k_4}$ where $k_1,k_2\in \N$ and $k_3,k_4\in\N_0$.  If $k_1+k_2\ge 3$, by symmetry we assume that $k_1\ge 2$, then $U_1^2(-U_1)=g^4  (-g)^2(e+g)^2  (e+g)(e-g)  (e+g)(e-g)$ and hence $4\in \mathsf L(U_1^2(-U_1))$ which implies that $k+1\in \mathsf L(A)$, a contradiction. Thus $k_1=k_2=1$ and hence $A=U_1(-U_1)(g(-g))^{k_3}((e+g)(e-g))^{k_4}$ where  $k_3,k_4\in\N_0$. Since $[\min \mathsf L(A), 2+k_3+k_4]\in \mathsf L(A)$, we obtain that $\mathsf L(A)=[\min \mathsf L(A), 2+y]\cup\{4+y\}$ where $y=k_3+k_4\in\N_0$. For every atom $V$ dividing $A$, we have that $|V|=2$ or $|V|=4$.  Thus $\min \mathsf L(A)\ge 2+\frac{y}{2}$ which implies that $\mathsf L(A)\in \mathcal L_5$.

\medskip
\noindent
CASE 2.5: \, $U_1\in S_4^1$ and  for every $i\in [2,k_0]$, we have   $U_i\not\in S_4^4\cup S_4^3\cup S_4^2$.

Without loss of generality, we may assume that $U_1=g^4$. Let $j\in [2,k_0]$.

Suppose that $|U_j|=4$. If $U_j\in  \{(e+g)^4, (e-g)^4\}$,  then $U_1U_j=W_1W_2$ with $W_1\in S_4^2$, where $W_1,W_2$ are atoms and hence we are back to  CASE 2.4. Thus it remains to consider the case where $U_j=U_1$ or $U_j=-U_1$.

Suppose that $|U_j|=3$. If $U_j\in \{(-g)^2(2g)\}$, then $3\in \mathsf L(U_1U_j)$, a contradiction.  If $U_j\in \{e(-g)(e+g), (e+2g)(-g)(e-g)\}$, then $U_1U_j=W_1W_2$ with $|W_1|=5$, where $W_1,W_2$ are atoms and hence we are back to  CASE 2.1. If $U_j\in\{(e+g)^2(2g), (e-g)^2(2g)\}$, then  $U_1U_j=W_1W_2$ with $W_1\in S_4^2$, where $W_1,W_2$ are atoms and hence we are back to CASE 2.4. If $U_j=e(2g)(e+2g)$, then $U_1U_j=W_1W_2$ with $W_1\in S_4^3$, where $W_1,W_2$ are atoms and hence we are back to  CASE 2.3. Thus it remains to consider the case where $U_j=g^2(2g)$, or $U_j=eg(e-g)$, or $U_j=(e+2g)g(e+g)$.

First, suppose that $U_i\neq -U_1$ for every $i\in [2,k_0]$. Then
\[
U_1\cdot\ldots\cdot U_{k_0}=U_1^{k_1}(eg(e-g))^{k_2}((e+2g)g(e+g))^{k_3}(g^2(2g))^{k_4}\,,
\]
where $ k_1\in\N $ and  $k_2,k_3, k_4\in \N_0$.
If  $k_2\ge 1$ and  $k_3\ge 1$, then
$eg(e-g)  (e+2g)g(e+g)=eg^2(e+2g)  (e+g)(e-g)$, $eg^2(e+2g)\in S_4^3$
and hence we are back to  CASE 2.3. Thus we may assume that $k_2=0$ or $k_3=0$. Since $\{g,2g, e+g, e+2g\}$ and $\{g, 2g, e, e-g\}$ are both half-factorial by Lemma \ref{4.4}.3, we obtain that $|\mathsf L(U_1\cdot\ldots\cdot U_{k_0})|=1$, a contradiction.

Second, suppose that there exists some $i\in [2,k_0]$, say $i=2$, such that $U_2=-U_1$. By symmetry we obtain that $\{U_1,\ldots, U_{k_0}\}=\{U_1,-U_1\}$. Since $4\in \mathsf L(U_1\cdot U_2\cdot (2g)^2)$, $5\in \mathsf L(U_1 U_2 e^2  (e-g)(e+g))$, and $5\in \mathsf L(U_1 U_2 (e+2g)^2 (e-g)(e+g))$, we obtain that
\[
\{U_{k_0+1}, \ldots,U_k\}\subset \{(e+g)(e-g), g(-g)\} \quad \text{ or} \]
\[  \{U_{k_0+1}, \ldots,U_k\}\subset \{e^2, (e+2g)^2, g(-g)\} \,.
\]
This implies that
\[
A=(g^4)^{k_1}((-g)^4)^{k_2}((e+g)(e-g))^{k_3}(g(-g))^{k_4}  \quad \text{ or}\]
 \[ A=(g^4)^{k_1}((-g)^4)^{k_2}(e^2)^{k_3}((e+2g)^2)^{k_4}(g(-g))^{k_5} \,,
\]
where $k_1,k_2\in \N$ and $k_3,k_4,k_5\in \N_0$.

Suppose that $A=(g^4)^{k_1}((-g)^4)^{k_2}((e+g)(e-g))^{k_3}(g(-g))^{k_4}$,  where $k_1,k_2\in \N$ and $k_3,k_4,k_5\in \N_0$.
If $k_1\ge 2$ and $k_3\ge 2$, then $g^4\  g^4  (-g)^4 (e+g)(e-g) (e+g)(e-g)=\big(g(-g)\big)^4  g^2(e+g)^2 g^2(e-g)^2$ and hence $6\in \mathsf L(g^4 g^4 (-g)^4 (e+g)(e-g) (e+g)(e-g))$. Thus $k+1\in \mathsf L(A)$, a contradiction. Therefore by symmetry  $k_3=1$ or $k_1=k_2=1$. If $k_3=1$, then $\mathsf L(A)=1+\mathsf L((g^4)^{k_1}((-g)^4)^{k_2}(g(-g))^{k_4})\in \mathcal L_3$. If $k_1=k_2=1$, then $\mathsf L(A)=[\min \mathsf L(A), 2+y]\cup\{4+y\}$ where $y=k_3+k_4\in\N_0$. For every atom $V$ dividing $A$, we have that $|V|=2$ or $|V|=4$.  Thus $\min \mathsf L(A)\ge 2+\frac{y}{2}$ which implies that $\mathsf L(A)\in \mathcal L_5$.

Suppose that $A=(g^4)^{k_1}((-g)^4)^{k_2}(e^2)^{k_3}((e+2g)^2)^{k_4}(g(-g))^{k_5}$, where $k_1,k_2\in \N$ and $k_3,k_4,k_5\in \N_0$. If $k_1\ge 2$, $k_3\ge 1$, and $k_4\ge 1$, then $g^4 g^4 (-g)^4 e^2 (e+2g)^2=\big(g(-g)\big)^4 \big(e(e+2g)g^2\big)^2$ and hence $6\in \mathsf L(g^4 g^4 (-g)^4 e^2 (e+2g)^2)$. Thus $k+1\in \mathsf L(A)$, a contradiction. Therefore by symmetry  $k_3=0$, or $k_4=0$, or $k_1=k_2=1$. If $k_3=0$ or $k_4=0$, then $\mathsf L(A)=k_3+k_4+\mathsf L((g^4)^{k_1}((-g)^4)^{k_2}(g(-g))^{k_5})\in \mathcal L_3$. If $k_1=k_2=1$, then $\mathsf L(A)=[\min \mathsf L(A), 2+y]\cup\{4+y\}$ where $y=k_3+k_4+k_5\in\N_0$. For every atom $V$ dividing $A$, we have that $|V|=2$ or $4$.  Thus $\min \mathsf L(A)\ge 2+\frac{y}{2}$ which implies that $\mathsf L(A)\in \mathcal L_5$.

\medskip
\noindent
CASE 2.6: \, $|U_1|=3$.

Let $j\in [2,k_0]$. We distinguish three subcases.

First, we suppose that $U_1\in S_3^3$,  and without restriction we may  assume that $U_1=eg(e-g)$. If $U_j=-U_1$, then $3\in\mathsf L(U_1U_j)$, a contradiction. If $U_j\in \{(-g)^2(2g), (e+g)^2(2g), e(2g)(e+2g)\}$, then $U_1U_j=W_1W_2$ with $W_1 \in S_4^4$ where $W_1,W_2$ are atoms and hence we are back to  CASE 2.2.
If $U_j\in \{(e+2g)g(e+g), (e+2g)(-g)(e-g)\}$, then $U_1U_j=W_1W_2$ with $W_1 \in S_4^3$ where $W_1,W_2$ are atoms and hence we are back to  CASE 2.3.  If $U_j=U_1$, then $U_1U_j=W_1W_2$ with $W_1 \in S_4^2$ where $W_1,W_2$ are atoms and hence we are back to  CASE 2.4. Thus it remains to consider the case where $U_j=g^2(2g)$ or $(e-g)^2(2g)$. Then  $U_1\cdot\ldots\cdot U_{k_0}=U_1(g^2(2g))^{k_1}((e-g)^2(2g))^{k_2}$ where $k_1,k_2\in \N_0$. Since $\{e,g,2g,e-g\}$ is half-factorial by Lemma \ref{4.4}.3, we obtain that  $|\mathsf L(U_1\cdot\ldots\cdot U_{k_0})|=1$, a contradiction.

Second, we suppose  that $U_1\in S_3^2$, and without restriction we may assume that $U_1=g^2(2g)$ and $U_j\not\in S_3^3$. If $U_j=-U_1$, then $3\in \mathsf L(U_1U_j)$. If $U_j=U_1$, then $U_1U_j=W_1W_2$ with $W_1 \in S_4^1$ where $W_1,W_2$ are atoms and hence we are back to  CASE 2.5. If $U_j\in \{(e+g)^2(2g), (e-g)^2(2g)\}$, then $U_1U_j=W_1W_2$ with $W_1 \in S_4^2$ where $W_1,W_2$ are atoms and hence we are back to  CASE 2.4. If $U_j=e(2g)(e+2g)$, then $U_1U_j=W_1W_2$ with $W_1 \in S_4^3$ where $W_1,W_2$ are atoms and hence we are back to  CASE 2.3.

Third, we suppose  that $U_1\in S_3^1$,  and without restriction we assume that  $U_j\in S_3^1$. Thus $3\in \mathsf L(U_1U_j)$, a contradiction.
 \end{proof}

\subsection{The system of sets of lengths of $C_2^4$}

Now we  give a complete description of the system of sets of lengths of $C_2^4$.

\medskip
\begin{theorem} \label{4.8}
$\quad \mathcal L (C_2^4) = \mathcal L_1\cup \mathcal L_2\cup \mathcal L_3\cup \mathcal L_4\cup \mathcal L_5\cup \mathcal L_6\cup \mathcal L_7\cup \mathcal L_8$, where
\begin{align*}
\mathcal L_1&=\big\{ \{y\} \mid y \in \N_0\big\}, \\
 \mathcal L_2&= \big\{ y+2k+3 \cdot [0,k] \mid y, k \in \N_0 \big\}, \\
 \mathcal L_3&= \big\{y+[2,3],  y+[2,4], y+[3,6], y+[3,7],y+[4,9] \mid y \in \N_0\big \}\cup\\
   & \quad \ \big\{ y+[m,m+k] \mid y \in \N_0, k \ge 6, m \ \text{\rm minimal with} \ m+k\le 5m/2 \big\} \\
   &=\big\{y+\left\lceil\frac{2k}{3}\right\rceil+[0,k]\mid y\in \N_0, k \in \N\setminus\{1,3\}\big\}\cup  \\
   & \quad \  \{y+3 + [0,3], y+2+[0,1]\mid y\in \N_0\}, \\
 \mathcal L_4&=\bigl\{ y + 2k
      + 2 \cdot [0, k] \, \bigm|\, y ,\, k \in \N_0 \bigr\}, \\
      \mathcal L_5&= \{ y+k+2+([0,k]\cup\{k+2\})\mid  y\in\N_0, k\in \N \}, \\
      \mathcal L_6&=\{ y+2\left\lceil \frac{k}{3}\right \rceil +2+(\{0\}\cup [2,k+2])\mid  y\in\N_0, k\ge 5 \text{ or }k=3\}, \\
      \mathcal L_7&=\{y+2k+3+\{0,1,3\}+3 \cdot [0,k]\mid y, k\in \N_0\}\ \cup\\
      & \quad \ \{y+2k+4+\{0,1,3\}+3 \cdot [0,k]\cup \{y+5k+8\}\mid y, k\in \N_0\}, \text{ and } \\
 \mathcal L_8&=\{y+2k+3+\{0,2,3\}+3 \cdot [0,k]\mid y, k\in \N_0\}\ \cup\\
           & \quad \ \{y+2k+4+\{0,2,3\}+3 \cdot [0,k]\cup \{y+5k+9\}\mid y, k\in \N_0\} . \\  \end{align*}
\end{theorem}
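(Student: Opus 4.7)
The plan is to follow the two-stage template of the proofs of Theorems \ref{4.6} and \ref{4.7}: first exhibit an explicit zero-sum sequence realizing every set in $\bigcup_{i=1}^{8}\mathcal L_i$, and then classify all sets of lengths $\mathsf L(A)$ for $A\in\mathcal B((C_2^4)^{\bullet})$ and show they lie in this union.

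For the realization step, I would fix a basis $(e_1,e_2,e_3,e_4)$ of $C_2^4$ and write down a small list of building-block atoms: the length-$5$ atom $U=e_1e_2e_3e_4(e_1+e_2+e_3+e_4)$, length-$4$ atoms of the form $V=e_1e_2e_3(e_1+e_2+e_3)$, length-$3$ atoms of the form $W=e_1e_2(e_1+e_2)$, and the length-$2$ atoms $g^2$ for $g\in(C_2^4)^{\bullet}$. Because $-h=h$ in $C_2^4$, the identity $U^2=e_1^2\,e_2^2\,e_3^2\,e_4^2\,(e_1+e_2+e_3+e_4)^2$ gives $\mathsf L(U^2)=\{2,5\}$, and taking products with powers $h^{2k}$, as well as similar computations with $V^2$ and $VW$, yields the arithmetical (multi)progressions with differences $1$, $2$, $3$ and the anomalous tails $\{0,1,3\}$ and $\{0,2,3\}$ together with the optional extra maxima $\{y+5k+8\}, \{y+5k+9\}$ featured in $\mathcal L_7, \mathcal L_8$. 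Multiplying through by $0^y$ installs the shift parameter. For each of $\mathcal L_2,\ldots,\mathcal L_8$ I would tabulate one such realizer and verify, via Lemma \ref{4.2}(2) to strip length-$2$ atoms and Lemma \ref{4.2}(3) to exclude $\max\mathsf L(A)-1$ when all atom lengths lie in $\{2,4\}$, that no further factorization lengths appear.

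For the classification step, let $A\in\mathcal B((C_2^4)^{\bullet})$ with $\Delta(\mathsf L(A))\ne\emptyset$ and fix a factorization $A=U_1\cdots U_k$ of maximal length $k$ with $|U_1|\ge\cdots\ge|U_k|$. Because $\Delta^{*}(C_2^4)=[1,3]$, the possibilities for $\Delta(\mathsf L(A))$ are quite constrained. I would split first on the pure cases: $\Delta(\mathsf L(A))=\{3\}$ yields $\mathsf L(A)\in\mathcal L_2$ after comparing $\min\mathsf L(A)$ with $\rho_{2j}(C_2^4)=5j$; $\Delta(\mathsf L(A))=\{2\}$ yields $\mathcal L_4$ from Lemma \ref{4.2}(3); and $\Delta(\mathsf L(A))=\{1\}$ yields $\mathcal L_3$ once a finite list of short intervals is handled separately using $\rho_{2j+1}(C_2^4)\le 5j+2$. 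In the remaining mixed cases a finer analysis of which length-$3$, length-$4$ and length-$5$ atoms appear among $U_1,\ldots,U_{k_0}$, with $k_0$ the last index for which $|U_{k_0}|\ge 3$, produces the families $\mathcal L_5,\mathcal L_6,\mathcal L_7,\mathcal L_8$, along the lines of the six-case argument used for $C_2\oplus C_4$ in the proof of Theorem \ref{4.7}.

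The main obstacle is exactly this final case analysis. Since $\exp(C_2^4)=2$ and $\mathsf r(C_2^4)=4$, the distance set $\Delta^{*}(C_2^4)=[1,3]$ is the largest allowed for a group with $\mathsf D=5$, and this is what forces the anomalous tails in $\mathcal L_7$ and $\mathcal L_8$ that have no analogue in $\mathcal L(C_5)$ or $\mathcal L(C_2\oplus C_4)$. Executing the case analysis requires a preliminary classification, up to the action of $\mathrm{GL}_4(\F_2)$, of the minimal zero-sum sequences over $C_2^4$ of lengths $3$, $4$ and $5$, analogous to Lemma \ref{lem_zssC2C4}: the length-$5$ atoms are exactly the $5$-element subsets of $(C_2^4)^{\bullet}$ summing to $0$ and spanning all of $C_2^4$. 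For each pair of such atoms one must then determine which factorization lengths their product admits, and it is here that the combinatorial bookkeeping becomes most delicate, since many more alternative factorizations are available in $C_2^4$ than in $C_2\oplus C_4$ due to the abundance of length-$2$ atoms $g^2$.
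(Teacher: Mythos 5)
Your plan follows the same two-stage template as the paper (realize each family, then classify all $\mathsf L(A)$ by $\Delta(\mathsf L(A))$), but as it stands it is a strategy outline rather than a proof: every substantive step is deferred. The realization step is not actually carried out; the paper needs quite specific computations here, namely a separate interval proposition (showing $[l_1,l_2]\in\mathcal L(C_2^4)$ iff $l_2/l_1\le 5/2$ and $(l_1,l_2)\ne(2,5)$, with explicit factorizations realizing $[2,4]$, $[3,7]$, $[4,9]$, $[4,10]$ and an induction with the shift $\{2,5\}$), and closed formulas for $\mathsf L(U^{t_1}V^{t_2})$, $\mathsf L(U^rV)$ and $\mathsf L(U^rVe_4^2e_0^2)$ (including parity-dependent extra maxima), which produce $\mathcal L_6$, $\mathcal L_8$ and $\mathcal L_7$; the exclusion of the length $t+1$ in $\mathsf L(U^{t_1}V^{t_2})$ rests on a counting inequality, not on Lemma \ref{4.2}. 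None of these appear in your proposal beyond the phrase ``similar computations''. More importantly, the classification step you acknowledge as ``the main obstacle'' is exactly where the mathematical content lies: the paper's CASE 4 ($\Delta(\mathsf L(A))=\{1,2\}$) requires choosing a shortest factorization with $k+1\notin\mathsf L(A)$ that is lexicographically maximal in its profile of atom lengths, proving a seven-part assertion constraining how atoms of lengths $3,4,5$ can coexist (e.g.\ any two length-$5$ atoms coincide, a length-$5$ and a length-$4$ atom share exactly three terms, etc.), and then working through six subcases. Your proposal does not supply any of this.

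One step you do commit to would fail as stated: you claim that the case $\Delta(\mathsf L(A))=\{2\}$ ``yields $\mathcal L_4$ from Lemma \ref{4.2}(3)''. That lemma only says that $\max\mathsf L(A)-1\notin\mathsf L(A)$ when $A$ is a product of length-$2$ atoms all of whose atoms have length $2$ or $4$; it gives no control over an arbitrary $A$ with $\Delta(\mathsf L(A))=\{2\}$, and in particular it does not yield the bound $\max\mathsf L(A)\le 2\min\mathsf L(A)$ that membership in $\mathcal L_4$ requires. The paper proves this bound by a minimal-counterexample argument: after reducing to $0\nmid A$, a factorization of minimal length must contain two length-$5$ atoms $U_1,U_2$, and one shows $5\in\mathsf L(U_1U_2)$ if $U_1=U_2$ and $3\in\mathsf L(U_1U_2)$ if $U_1\ne U_2$ (via the observation that an element of $U_2$ not in $U_1$ is a sum of two terms of $U_1$), both contradicting $\Delta(L)=\{2\}$. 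Without this argument, or a substitute for it, your case split is incomplete even at the level of the ``pure'' cases.
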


We note that the system of sets of lengths of $C_2^4$ is richer than that of the other groups we considered. A reason for this is that the set $\Delta^{\ast}(C_2^4)$ is largest, namely $\{1,2,3\}$ (this fact was also crucial in the proof of Theorem \ref{3.5}).
We recall some useful facts in the lemma below.

\medskip
\begin{lemma} \label{4.5}
Let $G=C_2^4$, and let $A \in \mathcal B (G)$.
\begin{enumerate}
\item $\Delta (G) = [1,3]$, and if \ $3 \in \Delta ( \mathsf L (A))$, then $\Delta ( \mathsf L (A)) = \{3\}$ and there is a basis $(e_1, \ldots, e_4)$ of $G$ such that $\supp (A) \setminus \{0\} = \{e_1, \ldots, e_4, e_1+ \ldots +e_4\}$.

\item $\rho_{2k+1} (G) = 5k+2$ for all $k \in \N$.
\end{enumerate}
\end{lemma}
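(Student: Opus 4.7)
The lemma has two parts and I address them separately.

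For part (2), the inequalities in \eqref{inequality-elasticity} with $\mathsf D(G) = 5$ yield $5k+1 \le \rho_{2k+1}(G) \le 5k + \lfloor 5/2 \rfloor = 5k+2$, so it suffices to exhibit $A \in \mathcal B(G)$ with $\{2k+1,\, 5k+2\} \subset \mathsf L(A)$. Fix a basis $(e_1, e_2, e_3, e_4)$ of $G$, set $e_0 = e_1+e_2+e_3+e_4$, and let $U = e_1 e_2 e_3 e_4 e_0$ (a length-$5$ atom). The key step is to produce three length-$5$ atoms whose product equals a length-$3$ atom times six squares; explicitly I will take
\[
U_1 = e_1 e_2 e_3 e_4 e_0, \quad U_2 = e_1 e_2 (e_3+e_4)(e_1+e_3)(e_2+e_4), \quad U_3 = e_3 e_4 (e_1+e_2)(e_1+e_3)(e_2+e_4).
\]
Each $U_i$ is a minimal zero-sum sequence (direct verification of distinctness and the absence of proper zero-sum subsequences), and expanding the product gives
\[
U_1 U_2 U_3 \,=\, V \cdot e_1^2 e_2^2 e_3^2 e_4^2 (e_1+e_3)^2 (e_2+e_4)^2
\]
with $V = e_0 (e_3+e_4)(e_1+e_2)$ a length-$3$ atom. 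Setting $A = U_1 U_2 U_3 \cdot U^{2(k-1)}$ for $k \ge 1$, the factorization $U_1 U_2 U_3 \cdot U^{2(k-1)}$ realizes length $2k+1$, while factoring $U^{2(k-1)}$ into $5(k-1)$ squares $e_1^2, e_2^2, e_3^2, e_4^2, e_0^2$ and applying the displayed identity gives a factorization of length $1 + 6 + 5(k-1) = 5k+2$.

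For part (1), the equality $\Delta(G) = [1, 3]$ follows from standard facts: $\min \Delta(G) = 1$ and $\Delta(G)$ is an interval by \cite{Ge-Yu12b}, while $\max \Delta(G) = \max \Delta^*(G) = \max\{\exp(G)-2,\, \mathsf r(G)-1\} = 3$ for the elementary $2$-group of rank $4$, as recalled in Section \ref{3} and \cite[Section 6.8]{Ge-HK06a}.

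For the structural claim, assume $3 \in \Delta(\mathsf L(A))$. Since $\mathsf D(G) = 5$, every atom of $\mathcal B(G^\bullet)$ has length in $\{2, 3, 4, 5\}$, and because every nonzero element of $G$ has order $2$, such an atom of length $\ell \ge 2$ has pairwise distinct terms whose support is an $\F_2$-linearly independent set of size $\ell - 1$ together with its sum. Crucially, for any atom $W$ of length $\ell \in \{3, 4, 5\}$ one has $W^2 = \prod_{g \in \supp(W)} g^2$, so the move $W \cdot W \leftrightarrow \prod_{g \in \supp(W)} g^2$ changes factorization length by $\pm(\ell - 2)$. Pick consecutive $\ell_0, \ell_0 + 3 \in \mathsf L(A)$ witnessing $3 \in \Delta(\mathsf L(A))$, so $\ell_0 + 1, \ell_0 + 2 \notin \mathsf L(A)$. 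The plan proceeds in two steps. First, if some factorization of $A$ contains $W^2$ for an atom $W$ of length $3$ or $4$, then the above move produces a factorization of length shifted by $1$ or $2$, which combined with the existence of $\ell_0$ (or $\ell_0+3$) forces $\ell_0 + 1$ or $\ell_0 + 2$ into $\mathsf L(A)$, a contradiction; a local-rewrite analysis extends this to rule out \emph{any} length-$3$ or length-$4$ atom dividing $A$ by pairing it with squares or with another short atom. Second, if two length-$5$ atoms $U, U'$ with $\supp(U) \ne \supp(U')$ both divide $A$, then the product $U \cdot U'$ admits a re-factorization into atoms of length $3$ or $4$ together with squares, again producing a forbidden length shift; hence all length-$5$ atoms dividing $A$ share a common support $\{e_1, e_2, e_3, e_4, e_0\}$ with $e_0 = e_1+e_2+e_3+e_4$ for some basis $(e_1, \ldots, e_4)$. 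This support must be fully realized by $A$, for otherwise no length-$5$ atom could appear and $3 \in \Delta(\mathsf L(A))$ would be unattainable. Finally, writing $A = U^a \prod_{g \in \supp(U)} g^{2 b_g}$, a direct computation yields $\mathsf L(A) = \big\{a + \sum_{g} b_g + 3j \;\big|\; 0 \le j \le \lfloor a/2 \rfloor\big\}$, so $\Delta(\mathsf L(A)) = \{3\}$.

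\textbf{Main obstacle.} The principal difficulty is the combinatorial bookkeeping in the second step: verifying that every pair of distinct length-$5$ atoms in $C_2^4$ (of which there are many, corresponding to all choices of $\F_2$-bases together with their total sums) admits a re-factorization $U \cdot U' = $ (shorter atoms) $\cdot$ (squares) producing a length shift of $1$ or $2$. This requires a classification of minimal zero-sum sequences over $C_2^4$ analogous to Lemma \ref{lem_zssC2C4} and a systematic check across the resulting configurations; the argument for ruling out length-$3$ or length-$4$ atoms dividing $A$ is similar in flavor but more involved than the rewriting identities encountered in the proofs of Theorems \ref{4.6} and \ref{4.7}.
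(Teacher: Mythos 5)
Your part (2) is correct and complete: the bounds in \eqref{inequality-elasticity} reduce the claim to exhibiting $A$ with $\{2k+1,5k+2\}\subset \mathsf L(A)$, and your atoms $U_1,U_2,U_3$ and the identity $U_1U_2U_3=\bigl(e_0(e_1+e_2)(e_3+e_4)\bigr)e_1^2e_2^2e_3^2e_4^2(e_1+e_3)^2(e_2+e_4)^2$ do exactly that (this is the same identity the paper uses in the proof of {\bf A1} of Proposition \ref{prop_C24_int}); the paper itself disposes of both parts by citation, namely \cite[Theorem 6.8.3]{Ge-HK06a} and \cite[Lemma 3.10]{Ge-Sc16b} for part (1) and \cite[Theorem 6.3.4]{Ge-HK06a} for part (2), so a self-contained construction is a legitimate alternative here. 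For $\Delta(G)=[1,3]$ your route is essentially fine, though the equality $\max\Delta(G)=\max\Delta^*(G)$ is not what Section \ref{3} recalls (only $\max\Delta_1(G)=\max\Delta^*(G)$ is); you should instead combine $3=\mathsf r(G)-1\in\Delta^*(G)\subset\Delta(G)$ with the standard bound $\max\Delta(G)\le \mathsf D(G)-2=3$, or cite \cite[Corollary 6.8.3]{Ge-HK06a} directly.

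The genuine gap is the structural half of part (1), which in your write-up is a plan rather than a proof, and several of its steps are not sound as stated. (i) The rewriting $W^2\leftrightarrow\prod_{g\in\supp(W)}g^2$ only produces a contradiction if it is applied to a factorization whose length lies at the gap $[\ell_0,\ell_0+3]$; the mere existence of \emph{some} factorization of $A$ containing $W^2$, at an arbitrary length, forces nothing. One must argue with a fixed factorization of controlled length, as the paper does in CASE 3 and CASE 4 of the proof of Theorem \ref{4.8} (minimal-length factorizations, assertion {\bf A}). (ii) Two length-$5$ atoms that each divide $A$ need not occur in a common factorization, so ``$U\cdot U'$ admits a re-factorization'' does not directly yield a forbidden length in $\mathsf L(A)$; again one needs them inside one factorization at the right length. (iii) Your final normal form $A=U^a\prod_{g\in\supp(U)}g^{2b_g}$ silently assumes $\supp(A)\subset\supp(U)\cup\{0\}$, i.e.\ it excludes squares $h^2$ with $h\notin\supp(U)$ — but that exclusion is precisely the support statement to be proved, and it requires a separate argument (compare CASE 4.4 in the proof of Theorem \ref{4.8}, where $4\in\mathsf L(U_1^2h^2)$ is used to rule such $h$ out). (iv) The assertion that without a length-$5$ atom dividing $A$ the distance $3$ cannot occur is stated without proof. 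Your own ``Main obstacle'' paragraph concedes that the systematic case check over a classification of minimal zero-sum sequences of $C_2^4$ is not carried out; that check (or the citation of \cite[Lemma 3.10]{Ge-Sc16b}, which is what the paper relies on) is the actual content of the claim, so as it stands part (1) is not established.
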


\begin{proof}
1. The first statement follows from \cite[Theorem 6.8.3]{Ge-HK06a}, and the second statement from \cite[Lemma 3.10]{Ge-Sc16b}.

\smallskip
2. See \cite[Theorem 6.3.4]{Ge-HK06a}.
 \end{proof}

\medskip

In the following result we characterize which intervals are sets of lengths for $C_2^4$. It turns out  that, with a single exception, the sole restriction is the one implied by elasticity.

\begin{proposition}
\label{prop_C24_int}
Let $G=C_2^4$ and let $2 \le l_1 \le l_2$ be integers. Then $[l_1,l_2] \in \mathcal L (G)$ if and only if $l_2/l_1 \le 5/2$ and $(l_1, l_2 )  \neq (2,5)$.
\end{proposition}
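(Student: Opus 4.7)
The plan is to handle necessity and sufficiency separately, the latter being the main work.

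\textbf{Necessity.} The elasticity bound from \eqref{inequality-elasticity} gives $\rho(G) = \mathsf D(G)/2 = 5/2$, so $l_2/l_1 = \rho([l_1, l_2]) \le 5/2$. To rule out $(l_1, l_2) = (2, 5)$, I would show that every $A \in \mathcal B(G)$ with $\{2, 5\} \subset \mathsf L(A)$ satisfies $\mathsf L(A) = \{2, 5\}$, which is not the interval $[2, 5]$. One first checks $0 \notin \supp(A)$ (otherwise $\mathsf L(A) = \mathsf v_0(A) + \mathsf L(A/0^{\mathsf v_0(A)})$ would be forced to be a translate containing both a length $\le 1$ and a length $\ge 3$, which is impossible). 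With no zero term, every atom has length at least $2$, and the $2$- and $5$-factorizations pin down $|A| = 10$, $A = U_1 U_2$ with $|U_1| = |U_2| = 5$, and $A = V_1 \cdots V_5$ with $|V_i| = 2$. Since the atoms of length $2$ in $C_2^4$ are the squares $g^2$, every element of $\supp(A)$ has even multiplicity; and since the atoms of length $5$ are of the form $e_1 e_2 e_3 e_4 (e_1+e_2+e_3+e_4)$ for a basis, they have square-free support of size $5$. Matching parities gives $\mathsf v_g(U_1) = \mathsf v_g(U_2)$ for all $g$, so $U_1 = U_2$ and $A = U_1^2$. A direct inspection of the support $\{e_1, \ldots, e_4, e_1+\cdots+e_4\}$ shows that no three of its elements sum to zero and no four sum to zero without a proper zero subsum, so $A$ has no dividing atom of length $3$ or $4$, giving $\mathsf L(A) = \{2, 5\}$.

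\textbf{Sufficiency.} Setting $k := l_2 - l_1$, the hypothesis becomes $l_1 \ge \max\{2, \lceil 2k/3 \rceil\}$ together with $(k, l_1) \ne (3, 2)$; denote this minimum admissible value by $l_1(k)$. Since $\mathsf L(0 \cdot A) = 1 + \mathsf L(A)$, it suffices to produce for each $k$ a base sequence $A_k \in \mathcal B(G)$ with $\mathsf L(A_k) = [l_1(k), l_1(k)+k]$; translating by $0^{l_1 - l_1(k)}$ then realizes every admissible $[l_1, l_1+k]$. The list of base intervals to construct is $\{2\}$, $[2,3]$, $[2,4]$, $[3,6]$, $[3,7]$, $[4,9]$, and $[\lceil 2k/3 \rceil, \lceil 2k/3 \rceil + k]$ for $k \ge 6$. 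The cases $\{2\}$, $[2,3]$, and $[3,6]$ are obtained at no cost from the embedding $C_2^3 \hookrightarrow C_2^4$ and Proposition \ref{3.3}.4. The remaining cases require genuine $C_2^4$-constructions: each $A_k$ is built as a product of length-$5$ atoms whose supports overlap in a controlled way (for instance $U_1 = e_1 e_2 e_3 e_4 (e_1+e_2+e_3+e_4)$ with $U_2 = e_1 e_2 e_3 (e_3+e_4)(e_1+e_2+e_4)$ gives $\mathsf L(U_1 U_2) = [2,4]$), times a suitable power of squares $g^2$ and of products $U (-U) = U^2$; the distinct factorization lengths arise from exchange relations replacing a pair $U_i U_j$ by a triple of shorter atoms, each swap shifting the factorization length by $+1$.

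\textbf{Main obstacle.} The two endpoints of each target interval are routine: the minimum arises from pairing as many length-$5$ atoms as possible, the maximum from splitting as far as possible into squares $g^2$. The real work is to certify that (i) every intermediate integer appears as a factorization length, and (ii) no integer outside the claimed interval occurs. Claim (i) follows by iterating an exchange relation $U_1 U_2 = T_1 T_2 T_3$ with the $T_i$ shorter atoms, which exists precisely because $\supp(U_1) \cap \supp(U_2)$ is nonempty. Claim (ii) is more delicate and calls for a bookkeeping argument on atom lengths in the style of the $g$-norm analysis in Lemma \ref{4.3} and the cross-number argument in Lemma \ref{4.4}.3; here the accounting is cleaner because every nonzero element of $C_2^4$ has order $2$, so atoms of length $2$ are exactly the squares and multiplicity parities encode most of the information. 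The exclusion of $[2,5]$ itself reflects the rigidity of length-$5$ atoms in $C_2^4$: once $|A| = 10$ is forced, no length-$3$ or length-$4$ atom can intervene, which is why the case $k = 3$ must begin at $l_1 = 3$.
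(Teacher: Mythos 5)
Your necessity argument is correct and in fact more self-contained than the paper's: the paper simply quotes \cite[Theorem 6.6.3]{Ge-HK06a} for $[2,\mathsf D(G)]\notin\mathcal L(G)$, whereas you derive $\mathsf L(A)=\{2,5\}$ directly from the parity/square-freeness structure of $C_2^4$ (your steps $0\nmid A$, $|A|=10$, $U_1=U_2$, and the absence of $3$- and $4$-atoms on $\supp(U^2)$ all check out). The reduction to the extremal intervals $[m_k,m_k+k]$, the use of $C_2^3$ for $k\in\{1,3\}$, and your explicit sequence realizing $[2,4]$ are also fine and parallel the paper's proof.

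The gap is in the sufficiency direction for $k=4$, $k=5$, and the family $k\ge 6$, which is the bulk of the proposition: you never produce the sequences realizing $[3,7]$, $[4,9]$, and $[\lceil 2k/3\rceil,\lceil 2k/3\rceil+k]$, nor verify their sets of lengths; you only describe a strategy and explicitly flag the verification as the ``main obstacle.'' Moreover, the one general mechanism you offer to fill in intermediate lengths is false as stated: an exchange relation $U_1U_2=T_1T_2T_3$ does \emph{not} exist ``precisely because $\supp(U_1)\cap\supp(U_2)$ is nonempty.'' For a length-$5$ atom $U$ one has $\supp(U)=\supp(U)$ yet $\mathsf L(U^2)=\{2,5\}$, so no length-$3$ refactorization exists (this is exactly why $(2,5)$ is excluded); conversely, for \emph{distinct} length-$5$ atoms $U_1\ne U_2$ one always has $3\in\mathsf L(U_1U_2)$ even when the supports are disjoint (any $g\t U_2$ with $g\nmid U_1$ is a sum of two terms of $U_1$). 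You also have the difficulty inverted: in the constructions actually needed, the containment $\mathsf L(A)\subset[m,m+k]$ is the routine part (every atom has length in $[2,5]$, so $|A|/5\le\min$ and $\max\le|A|/2$), while exhibiting \emph{every} intermediate length is where the real work lies. The paper does this by explicit chains of factorizations for $U_1U_2U_3$, $U_1^2U_2U_4$, and $U_1^2U_2^2$ (giving $[3,7]$, $[4,9]$, $[4,10]$), and then an induction on $k\ge 6$ that adds $U_1^2$, i.e.\ the sumset with $\{2,5\}$, handling $k=7,8$ separately; nothing in your sketch substitutes for these computations, so the proof is incomplete.
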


\begin{proof}
Suppose that $[l_1,l_2] \in \mathcal L (G)$ with integers $2 \le l_1 \le l_2$. Then \eqref{inequality-elasticity} implies that  $l_2/l_1 \le \rho(G) = 5/2 $.  Moreover, $[2,5]=[2, \mathsf D (G)]  \notin \mathcal L (G)$ by \cite[Theorem 6.6.3]{Ge-HK06a}.

Conversely, we need to show that  for integers $2 \le l_1 \le l_2$ with $(l_1, l_2) \neq (2,5)$  and $l_2/l_1 \le 5/2$, we have   $[l_1,l_2] \in \mathcal L (G)$.
We start with an observation that reduces the problem to constructing these sets of intervals for extremal choices of the endpoints.

Let $k \in \N$. If $m \in \N$ such that $[m,m+k] \in \mathcal L (G)$, then $y + [m,m+k] \in \mathcal L (G)$ for all $y \in \N_0$. Thus let $m_k=\max\{2,\lceil\frac{2k}{3}\rceil \}$ if $k\in \N\setminus\{3\}$ and $m_3=3$.  Therefore we only need  to prove that  $[m_k,m_k+k] \in \mathcal L (G)$.

For  $k\in [1,5]$ we are going to realize sets $[m_k,m_k+k]$ as sets of lengths. Then we handle the case $k \ge 6$.

If $k \in \{1,3\}$, then the sets $[2,3],  [3,6] \in \mathcal L (C_2^3) \subset \mathcal L (G)$. To handle the case $k=2$, we have to show that $[2,4] \in \mathcal L (G)$. Let $(e_1,\ldots, e_4)$ be a basis of $G$ and $e_0=e_1+\ldots+e_4$. If
\[
U_1= e_0 \cdot \ldots \cdot e_4 \quad \text{and} \quad  U_2 = e_1e_2(e_1+e_3)(e_2+e_4)(e_3+e_4),
\]
then $\max \mathsf L (U_1U_2) < 5$, and
\[
\begin{aligned}
U_1 U_2 & = \Big(e_0e_1e_2(e_3+e_4)\Big) \Big((e_1+e_3)e_1e_3\Big) \Big((e_2+e_4)e_2e_4\Big)   \\
 & = \Big( e_0(e_1+e_3)(e_2+e_4)\Big) \Big( e_1^2\Big) \Big(e_2^2\Big) \Big( (e_3+e_4)e_3e_4\Big) \,,
\end{aligned}
\]
shows that $\mathsf L (U_1U_2) = [2,4]$. It remains to verify  the following assertions.

\begin{enumerate}
\item[{\bf A1.}\,]  $[3,7] \in \mathcal L (G)$ (this settles the case $k=4$).

\item[{\bf A2.}\,]  $[4,9] \in \mathcal L (G)$ (this settles the case $k=5$).

\item[{\bf A3.}\,]  Let $k \ge 6$. Then $[\lceil\frac{2k}{3}\rceil,\ \lceil\frac{2k}{3}\rceil+k] \in \mathcal L (G)$.

\end{enumerate}

\smallskip
{\it Proof of \,{\bf A1}}.\, Clearly,
\[
\begin{aligned}
U_1 & = e_0 \cdot \ldots \cdot e_4, \ U_2 = e_1e_2(e_1+e_3)(e_2+e_4)(e_3+e_4), \quad \text{and} \\
U_3 & = (e_1+e_3)(e_2+e_4)e_3e_4(e_1+e_2)
\end{aligned}
\]
are minimal zero-sum sequences of lengths $5$. Since
\[
\begin{aligned}
U_1U_2U_3 & = \Big( e_0 (e_1+e_2)(e_3+e_4) \Big) \Big( e_1^2 \Big) \Big( e_2^2 \Big) \Big( e_3^2 \Big)\Big( e_4^2 \Big) \Big( (e_1+e_3)^2\Big) \Big( (e_2+e_4)^2 \Big) \\
 & = \Big( e_0(e_1+e_2)(e_3+e_4) \Big) \Big( (e_1+e_3)e_1e_3 \Big)^2 \Big( (e_2+e_4)^2 \Big) \Big( e_2^2 \Big) \Big( e_4^2 \Big) \\
 & = \Big( e_0 (e_1+e_2)(e_3+e_4) \Big) \Big( (e_1+e_3)e_1e_3 \Big)^2 \Big( (e_2+e_4)e_2e_4 \Big)^2 \\
 & = U_2 \Big( e_0(e_1+e_2)(e_1+e_3)e_1e_4 \Big) \Big( (e_2+e_4)e_2e_4 \Big) \Big( e_3^2 \Big) \,,
\end{aligned}
\]
it follows that $\mathsf L (U_1U_2U_3) = [3,7]$.

\smallskip
{\it Proof of \,{\bf A2}}.\, We use the same notation as in {\bf A1}, set $U_4 = (e_1+e_2)(e_1+e_3)(e_2+e_4)(e_3+e_4)$, and assert that
$\mathsf L (U_1^2 U_2 U_4) = [4,9]$. Clearly, $4 \in \mathsf L (U_1^2 U_2 U_4)$ and $\max \mathsf L (U_1^2 U_2 U_4) < 10$. Since
\[
\begin{aligned}
U_1^2U_2U_4 & = \Big(e_0e_1e_2(e_3+e_4)\Big) \Big((e_1+e_3)e_1e_3\Big) \Big((e_2+e_4)e_2e_4\Big) U_1U_4 \\
            & = \Big(e_0(e_1+e_3)(e_2+e_4)\Big) \Big( e_1^2 \Big) \Big( e_2^2 \Big) \Big((e_3+e_4)e_3e_4\Big) U_1U_4   \\
            & = \prod_{\nu=0}^4 \big( e_{\nu}^2 \big) U_2 U_4 \\
            & = \Big( (e_1+e_3)^2\Big) \Big( (e_2+e_4)^2\Big) \Big( (e_3+e_4)e_3e_4 \Big)^2 \Big(e_0^2\Big) \Big( e_1^2\Big) \Big(e_2^2\Big) \Big( (e_1+e_2)e_1e_2 \Big) \\
            & = \Big( (e_1+e_3)^2\Big) \Big( (e_2+e_4)^2\Big) \Big( (e_3+e_4)^2 \Big) \Big( e_3^2 \Big) \Big( e_4^2 \Big) \Big(e_0^2\Big) \Big( e_1^2\Big) \Big(e_2^2\Big) \Big( (e_1+e_2)e_1e_2 \Big)\,,
\end{aligned}
\]
the assertion follows.

\smallskip
{\it Proof of \,{\bf A3}}.\, We proceed by induction on $k$. For $k=6$, we have to verify that $[4,10] \in \mathcal L (G)$. We use the same notation as in {\bf A1}, and assert that
$\mathsf L (U_1^2 U_2^2) = [4,10]$. Clearly, $\{4, 10\} \subset \mathsf L (U_1^2 U_2^2) \subset [4,10]$. Since
\[
\begin{aligned}
U_1^2 U_2^2 & = \Big(e_0e_1e_2(e_3+e_4)\Big) \Big((e_1+e_3)e_1e_3\Big) \Big((e_2+e_4)e_2e_4\Big) U_1U_2   \\
 & = \Big(e_0e_1e_2(e_3+e_4)\Big)^2 \Big((e_1+e_3)e_1e_3\Big)^2 \Big((e_2+e_4)e_2e_4\Big)^2 \\
 & = \prod_{\nu=0}^4 \big( e_{\nu}^2 \big) U_2^2 \\
 & = \Big( e_0(e_1+e_3)(e_2+e_4)\Big)^2 \Big( e_1^2\Big)^2 \Big(e_2^2\Big)^2 \Big( (e_3+e_4)e_3e_4\Big)^2 \\
 & = \Big( (e_1+e_3)^2\Big) \Big( (e_2+e_4)^2\Big) \Big( (e_3+e_4)e_3e_4 \Big)^2 \Big(e_0^2\Big) \Big( e_1^2\Big)^2 \Big(e_2^2\Big)^2
\end{aligned}
\]
it follows that $[5,9] \subset \mathsf L (U_1^2 U_2^2)$, and hence $\mathsf L (U_1^2 U_2^2) = [4,10]$.

If $k=7$, then $[5,12]\supset \mathsf L(U_1^3U_2U_3)\supset \mathsf L(U_1U_2U_3)+\mathsf L(U_1^2) = [3,7]+\{2,5\}=[5,12]$ which implies that $[5,12] \in \mathcal L (G)$. If $k=8$, then $[6,14]\supset \mathsf L(U_1^4U_2U_4)\supset \mathsf L(U_1^2U_2U_4)+\mathsf L(U_1^2) = [4,9]+\{2,5\}=[6,14]$ which implies that $[6,14] \in \mathcal L (G)$. Suppose that $k \ge 9$, and that the assertion holds for all $k' \in [6, k-1]$. Then the set $[\lceil\frac{2(k-3)}{3}\rceil, \lceil\frac{2(k-3)}{3}\rceil+k-3] \in \mathcal L (G)$. This implies that $[\lceil\frac{2k}{3}\rceil, \lceil\frac{2k}{3}\rceil+k]=[\lceil\frac{2(k-3)}{3}\rceil, \lceil\frac{2(k-3)}{3}\rceil+k-3]+\{2,5\} \in \mathcal L (G)$.
 \end{proof}

We now proceed to prove Theorem \ref{4.8}.

\begin{proof}[Theorem \ref{4.8}]
Let $(e_1, e_2,e_3,e_4)$ be a basis of  $G=C_2^4$. We set $e_0 = e_1+e_2+e_3+e_4$, $U =e_0e_1e_2e_3e_4$, and $V=e_1e_2e_3(e_1+e_2+e_3)$.

\medskip
\noindent
{\bf Step 0.}  Some elementary constructions.

Let $t_1\ge 2$, $t_2\ge 2$, $t=t_1+t_2$, and \begin{align*}
L_{t_1,t_2}=\left\{\begin{aligned}
&\{t\}\cup [t+2, 5\lfloor t_1/2\rfloor+4(t/2-\lfloor t_1/2\rfloor)] &&\text{ if $t$ is even}\,,\\
&\{t\}\cup [t+2, 5\lfloor t_1/2\rfloor+4((t-1)/2-\lfloor t_1/2\rfloor)+1] &&\text{ if $t$ is odd}\,.
\end{aligned}\right.
\end{align*}
Since $\mathsf L(U^2V^2)=\{4\}\cup[6,9]$, we have that
$\mathsf L(U^{t_1}V^{t_2})\supset \mathsf L(U^2V^2)+\mathsf L(U^{t_1-2}V^{t_2-2})=L_{t_1,t_2}$.  Note that for every atom $W$ dividing $U^{t_1}V^{t_2}$, we have
$$W=\left\{\begin{aligned}
&U &\text{ if } |W|=5\,,\\
&V &\text{ if } |W|=4\,,\\
&e_0e_4(e_1+e_2+e_3) &\text{ if } |W|=3\,.
\end{aligned}
\right.$$
Assume to the contrary that $t+1\in \mathsf L(U^{t_1}V^{t_2})$. Then there exist $t_3,t_4,t_5\in \N_0$  and atoms $W_1,\ldots, W_{t_3+t_4+1}$ such that
$U^{t_3}V^{t_4}=W_1\ldots W_{t_3+t_4+1}$ with $t_3+t_4\ge 2$, $t_5\le \min \{t_3,t_4\}$, $|W_i|=3$ for $i\in [1, t_5]$, and $|W_i|=2$ for $i\in [t_5+1, t_3+t_4+1]$.
It follows that $5t_3+4t_4=3t_5+2(t_3+t_4+1-t_5)\le 3t_3+2t_4+2$ and hence $t_3+t_4\le 1$, a contradiction.
Therefore $t+1\not\in \mathsf L(U^{t_1}V^{t_2})$ and  \begin{equation}\label{eq9}\mathsf L(U^{t_1}V^{t_2})=L_{t_1,t_2}.\end{equation}

Note that for every atom $W$ dividing $U^rV$ with $r\ge 2$ and $e_1+e_2+e_3\t W$, we have $W=V$ or $W=e_0e_4(e_1+e_2+e_3)$.
It follows that for all $r \ge 2$
\begin{align}\label{eq10}
 &\mathsf L(U^rV)\nonumber \\
=&\big(1+\mathsf L(U^r)\big)\cup \big(1+\mathsf L(e_1^2e_2^2e_3^2U^{r-1})\big)\\ \nonumber
=&\left\{\begin{aligned}
&r+1+\{0,2,3\}+3 \cdot [0, r/2-1], &&\text{ if $r$ is even}\,,\\
&r+1+\{0,2,3\}+3 \cdot [0,  (r-1)/2-1]\cup \{r+1+(3r-3)/2+2\}, &&\text{ if $r$ is odd}\,.
\end{aligned}\right.
\end{align}

Note that for every atom $W$ dividing $U^rVe_4^2e_0^2$ with $r\ge 2$ and $e_1+e_2+e_3\t W$, we have $W=V$ or $W=e_0e_4(e_1+e_2+e_3)$.
It follows that for all $r \ge 2$
\begin{align}\label{eq11}
&\mathsf L(U^rVe_4^2e_0^2)\nonumber \\
=&\big(1+\mathsf L(U^re_4^2e_0^2)\big)\cup \big(1+\mathsf L(U^{r+1})\big)\\ \nonumber
=&\left\{\begin{aligned}
&r+2+\{0,1,3\}+3 \cdot [0, (r+1)/2-1], &&\text{ if $r$ is odd}\,,\\ \nonumber
&r+2+\{0,1,3\}+3 \cdot [0,  r/2-1]\cup \{r+2+3r/2+1\}, &&\text{ if $r$ is even}\,.
\end{aligned}\right.
\end{align}.

\smallskip
\noindent
{\bf Step 1.} We prove that for every $L\in  \mathcal L_2\cup \mathcal L_3\cup \mathcal L_4\cup \mathcal L_5\cup \mathcal L_6 \cup \mathcal L_7 \cup \mathcal L_8$, there exists an $A\in \mathcal B(G)$ such that $L=\mathsf L(A)$. We  distinguish seven cases.

If $L=y+2k+3\cdot [0,k]\in \mathcal L_2$ with $y, k \in \N_0$, then
$L = \mathsf L ( 0^y U^{2k} ) \in \mathcal L (G)$.

If $L \in \mathcal L_3$, then the claim follows from Proposition \ref{prop_C24_int}.

If $L= y + 2k + 2 \cdot [0, k] \in \mathcal L_4$ with  $y, k \in \N_0$, then Proposition \ref{3.3}.4 implies that $L \in \mathcal L (C_2^3) \subset \mathcal L (G)$.

Suppose that $L=y+k+2+([0,k]\cup \{k+2\})\in \mathcal L_5$ with $k\in \N$ and $y\in \N_0$. Note that $\mathsf L(V^2 (e_1+e_4)^{2}(e_2+e_4)^2(e_3+e_4)^2(e_1+e_2+e_3+e_4)^2)=[4,6]\cup\{8\}$. If $k$ is even, then we set $A=0^y V^2 (e_1+e_4)^{k}(e_2+e_4)^k(e_3+e_4)^k(e_1+e_2+e_3+e_4)^k$ and obtain that $\mathsf L(A)=L$ by Lemma \ref{4.2}.3. If $k$ is odd, then we set $A=0^y V^2 (e_1+e_4)^{k+1}(e_2+e_4)^{k+1}(e_3+e_4)^{k-1}(e_1+e_2+e_3+e_4)^{k-1}$ and obtain that $\mathsf L(A)=L$ by Lemma \ref{4.2}.3.

Suppose that $L=y+2\lceil\frac{k}{3}\rceil+2+(\{0\}\cup [2,k+2])\in \mathcal L_6$ with $\big(k\ge 5$ or $k=3\big)$ and $y\in \N_0$. If $k\equiv 0\mod 3$, then we set $A=0^y U^{2k/3} V^2$ and hence $\mathsf L(A)=L$ by Equation \eqref{eq9}. If $k\equiv 2\mod 3$, then we set $A=0^y U^{(2k-4)/3} V^4$ and hence $\mathsf L(A)=L$ by \eqref{eq9}. If $k\equiv 1\mod 3$, then we set $A=0^y U^{(2k-8)/3} V^6$ and obtain that $\mathsf L(A)=L$ by Equation \eqref{eq9}.

Suppose that $L\in \mathcal L_7$. If $L=y+2k+3+\{0,1,3\}+3 \cdot [0,k]$ with $y\in \N_0$ and $k\in \N_0$, then we set $A=0^y U^{2k+1} V e_4^2 (e_1+e_2+e_3+e_4)^2$ and obtain that $\mathsf L(A)=L$ by Equation \eqref{eq11}. If $L=y+2k+4+\{0,1,3\}+3 \cdot [0,k]\cup \{y+5k+8\}$ with $y\in \N_0$ and $k\in \N_0$, then we set $A=0^y U^{2k+2} V e_4^2 (e_1+e_2+e_3+e_4)^2$ and obtain that $\mathsf L(A)=L$ by Equation \eqref{eq11}.

Suppose that $L\in \mathcal L_8$. If $L=y+2k+3+\{0,2,3\}+3 \cdot [0,k]$ with $y\in \N_0$ and $k\in \N_0$, then we set $A=0^y U^{2k+2} V$ and hence $\mathsf L(A)=L$ by Equation \eqref{eq10}. If $L=y+2k+4+\{0,2,3\}+3 \cdot [0,k]\cup \{y+5k+9\}$ with $y\in \N_0$ and $k\in \N_0$, then we set $A=0^y U^{2k+3} V e_4^2 (e_1+e_2+e_3+e_4)^2$ and obtain that $\mathsf L(A)=L$ by Equation \eqref{eq10}.

\medskip
\noindent
{\bf Step 2.} We prove that for every $A\in \mathcal B(G^{\bullet})$, $\mathsf L(A)\in  \mathcal L_2\cup \mathcal L_3\cup \mathcal L_4\cup \mathcal L_5\cup \mathcal L_6 \cup \mathcal L_7 \cup \mathcal L_8$.

Let $A \in \mathcal B (G^{\bullet})$. We may suppose that $\Delta (\mathsf L (A)) \ne \emptyset$. By Lemma \ref{4.5}.1 we have to distinguish four cases.

\smallskip
\noindent CASE 1: \,  $\Delta ( \mathsf L (A) ) = \{3\}$.

By Lemma \ref{4.5}, there is a basis  of $G$, say $(e_1, e_2, e_3, e_4)$,  such that $\supp (A) = \{e_1, \ldots, e_4, e_0\}$. Let $n \in \N_0$ be maximal such that $U^{2n} \t A$. Then there exist a proper subset $I \subset [0,4]$, a tuple $(m_i)_{i \in I} \in \N_0^{(I)}$, and $\epsilon \in \{0,1\}$ such that
\[
A =  U^{\epsilon} U^{2n} \prod_{i \in I} (e_i^2)^{m_i} \,.
\]
Using \cite[Lemma 3.6.1]{Ge-Sc16b}, we infer that
\[
\mathsf L (A)  =  \epsilon + \sum_{i \in I} m_i + \mathsf L (U^{2n}) =  \epsilon + \sum_{i \in I} m_i + (2n + 3 \cdot [0, n])\in \mathcal L_2 \,.
\]

\smallskip
\noindent CASE 2: \,  $\Delta ( \mathsf L (A)) = \{1\}$.

Then $\mathsf L (A)$ is an interval, and it is a direct consequence of Proposition \ref{prop_C24_int} that $\mathsf L (A) \in  \mathcal L_3$.

\smallskip
\noindent CASE 3: \,  $\Delta ( \mathsf L(A)) = \{2\}$.

The following reformulation turns out to be convenient. Clearly,
we have to show that for every  $L \in \mathcal{L}(G)$ with $\Delta (L) = \{2\}$ there exist $y' \in \mathbb{N}_0$ and $k' \in \mathbb{N}$ such that  $L=y' + 2 \cdot [k',2k']$, which is equivalent to  $\rho (L)= \max L / \min L  \le 2$.
Assume to the contrary that there is an $L \in \mathcal L (G)$ with $\Delta (L)=\{2\}$ such that $\max L \ge 2 \min L  + 1$. We choose one such $L \in \mathcal L (G)$ with $ \min L$ being minimal, and we choose a $B \in \mathcal B (G)$ with $\mathsf{L}(B)= L$. Since $\min L$ is minimal,  we obtain that $0 \nmid B$. Consequently, $|B| \ge 2  \max L  \ge 4 \min L +2 $. Since $\mathsf D (G) = 5$, it follows that a factorization of minimal length of $B$ contains at least two (possibly  equal) minimal zero-sum sequences $U_1, U_2$ with  $|U_1|=|U_2|=5$, say $U_1 = e_0 \cdot \ldots \cdot e_4$.

If $U_1= U_2$, then $5 \in \mathsf{L} (U_1U_2)$ and thus $\min L + 3 \in L$, contradicting the fact that $\Delta(L)=\{2\}$.
Thus $U_1 \neq U_2$. We assert that $3 \in \mathsf{L} (U_1U_2)$, and thus obtain again a contradiction to the fact that $\Delta(L)=\{2\}$.

Let $g \in G$ with $g \t U_2$ but $g \nmid U_1$. Then $g$ is the sum of two elements from $U_1$, say $g= e_1+ e_2$. Therefore $g(e_1e_2)^{-1}U_1$ is a minimal zero-sum sequence, whereas the sequence
$(e_1e_2)g^{-1}U_2$ cannot be a minimal zero-sum sequence because it has  length  $6$. Since $g^{-1}U_2$ is zero-sum free, every minimal zero-sum sequence dividing   $(e_1e_2)g^{-1}U_2$ must contain $e_1$ or $e_2$. This shows that $\mathsf{L}((e_1e_2)g^{-1}U_2)= \{2\}$ and thus $3 \in \mathsf{L} (U_1U_2)$.

\smallskip
\noindent CASE 4: \,  $\Delta ( \mathsf L (A)) = \{1,2\}$.

Let $k \in \mathsf L (A)$ be minimal such that $A$ has a factorization of the form
$A=U_1\cdot\ldots\cdot U_k=V_1\cdot\ldots\cdot V_{k+2}$, where $k+1\not\in \mathsf L(A)$ and $U_1,\ldots, U_k, V_1,\ldots, V_{k+2} \in \mathcal A (G)$  with $|U_1|\ge |U_2|\ge \ldots \ge |U_k|$.
Without restriction we may suppose that the tuple
\begin{equation} \label{maximal-lexico}
(|\{i\in [1,k]\mid |U_i|=5\}|,|\{i\in [1,k]\mid |U_i|=4\}|,|\{i\in [1,k]\mid |U_i|=3\}|) \in \N_0^3
\end{equation}
is maximal (with respect to the lexicographic order) among all factorizations of $A$ of length $k$.
By definition of $k$, we have $[\min \mathsf L(A), k]\in \mathsf L(A)$.  Let $k_0\in [2,k]$ such that $|U_i|\ge 3$ for every $i\in [1,{k_0}]$ and $|U_i|=2$ for every $i\in [k_0+1,k]$.
We start with the following assertion.

\medskip
\noindent{\bf  A.}
\begin{enumerate}
\item For each two distinct $i,j\in [1,k_0]$, we have  $3\not\in \mathsf L(U_iU_j)$.

\item For each two distinct $i,j\in [1,k_0]$ with $|U_i|=|U_j|=5$, we have  $U_i=U_j$.

\item For each two distinct $i,j\in [1,k_0]$ with $|U_i|=5$ and $|U_j|=4$, we have $| \gcd (U_i, U_j)|=3$.

\item Let $i,j\in [1,k_0]$ be distinct with $|U_i|=|U_j|=4$, say $U_i=f_1f_2f_3(f_1+f_2+f_3)$ where $(f_1,f_2,f_3,f_4)$ a basis of $G$. Then $U_j=U_i$, or $U_j=(f_1+f_4)(f_2+f_4)(f_3+f_4)(f_1+f_2+f_3+f_4)$, or $U_j=f_4(f_1+f_2+f_4)(f_2+f_3+f_4)(f_1+f_3+f_4)$. Furthermore, if $U_i\neq U_j$, then for all $t\in [1, k_0]\setminus \{i,j\}$, we have $|U_t|\neq 4$.

\item Let $i,j\in [1,k_0]$ be distinct with $|U_i|=5$ and $|U_j|=3$. Then there exist $g_1,g_2,g_3\in G$ such that $g_1g_2g_3\t U_i$ and $U_j=(g_1+g_2)(g_2+g_3)(g_3+g_1)$. Furthermore, for all $t\in [1, k_0]\setminus \{i,j\}$, we have $|U_t|=3$.

\item Let $i,j\in [1,k_0]$ be distinct with $|U_i|=4$ and $|U_j|=3$. Then $|\gcd (U_i, U_j)|=0$, and there exist $g,g_1,g_2\in G$ such that $g\t U_j$, $g_1g_2\t U_i$ and $g=g_1+g_2$. Furthermore, for all $t\in [1, k_0]\setminus \{i,j\}$, we have $|U_t|=3$.

\item For each two distinct $i,j\in [1,k_0]$ with $|U_i|=|U_j|=3$, we have $|\gcd (U_i, U_j)|=0$.
\end{enumerate}

\noindent{\it Proof of {\bf  A. }}

1.  If there exist distinct $i,j\in [1,k_0]$ such that $3\in \mathsf L(U_iU_j)$, then $k+1\in \mathsf L(A)$, a contradiction.

\smallskip
2. Since $|U_i|=5$ and $U_j\neq U_i$,  there exist $g,g_1,g_2\in G$ with  $g\t U_j$ and $g_1g_2\t U_i$ such that $g=g_1+g_2$. Thus $U_i(g_1g_2)^{-1}g$ is an atom and $U_jg^{-1}g_1g_1$ is a product of two atoms which implies that $3\in \mathsf L(U_iU_j)$, a contradiction.

\smallskip
3. Since $|U_i|=5$ and $U_j\neq U_i$, there exist  $g,g_1,g_2\in G$ with  $g\t U_j$ and $g_1g_2\t U_i$ such that $g=g_1+g_2$. Thus $gg_1g_2$ is an atom and $U_iU_j(gg_1g_2)^{-1}$ is a sequence of length $6$. By 1., $2\notin \mathsf L(U_iU_j(gg_1g_2)^{-1})$ which implies that $\mathsf L(U_iU_j(gg_1g_2)^{-1})=\{3\}$ and hence $|\gcd (U_i, U_j)|=3$.

\smallskip
4. We set $G_1=\langle f_1,f_2,f_3\rangle$ and distinguish three cases.

Case (i):  $U_j\in \mathcal B(G_1)$.
Since $3\notin \mathsf L(U_iU_j)$, we obtain that $U_j=U_i$.

Case (ii):  $U_j=(g_1+f_4)(g_2+f_4)g_3g_4$ with $g_1g_2g_3g_4\in \mathcal B(G_1)$.

If $g_3,g_4\in \{f_1,f_2,f_3,f_1+f_2+f_3\}$, then $3\in \mathsf L(U_iU_j)$, a contradiction. Thus, without loss of generality, we may assume that $g_3=f_1+f_2\notin \{f_1,f_2,f_3,f_1+f_2+f_3\}$. Thus $g_3f_3(f_1+f_2+f_3)$ is an atom and $(g_1+f_4)(g_2+f_4)f_1f_2g_4$ is a zero-sum sequence of length $5$. Since $3\notin \mathsf L(U_iU_j)$, we have that $(g_1+f_4)(g_2+f_4)f_1f_2g_4$ is an atom of length $5$, a contradiction to the maximality condition in Equation \eqref{maximal-lexico}.

Case (iii):  $U_j=(g_1+f_4)(g_2+f_4)(g_3+f_4)(g_4+f_4)$ with $g_1g_2g_3g_4\in \mathcal B(G_1)$.

First,  suppose that $g_1g_2g_3g_4$ is an atom.  If $g_1g_2g_3g_4\neq U_i$, then there exist an element $h\in \{f_1,f_2,f_3,f_1+f_2+f_3\}$ and distinct $t_1,t_2\in [1,4]$, say $t_1=1, t_2=2$, such that $h=g_1+g_2=(g_1+f_4)+(g_2+f_4)$. Thus $U_ih^{-1}(g_1+f_4)(g_2+f_4)$ is a zero-sum sequence of length $5$ and $h(g_3+f_4)(g_4+f_4)$ is an atom. It follows that $U_ih^{-1}(g_1+f_4)(g_2f_4)$ is atom of length $5$ since $3\notin\mathsf L(U_iU_j)$, a contradiction to the maximality condition in Equation \eqref{maximal-lexico}.
Therefore  $g_1g_2g_3g_4=U_i$ which implies that  $U_j=(f_1+f_4)(f_2+f_4)(f_3+f_4)(f_1+f_2+f_3+f_4)$.

Second, suppose that $g_1g_2g_3g_4$ is not an atom. Without loss of generality, we may assume that $g_1=0$ and $g_2g_3g_4$ is an atom. If $\{g_2,g_3,g_4\}\cap \{f_1,f_2,f_3,f_1+f_2+f_3\}\neq \emptyset$, say $g_2\in \{f_1,f_2,f_3,f_1+f_2+f_3\}$, then $g_2(g_3+f_4)(g_4+f_4)$ is an atom and $U_ig_2^{-1}f_4(g_2+f_4)$ is a zero-sum sequence of length $5$. It follows that $U_ig_2^{-1}f_4(g_2+f_4)$ is atom of length $5$ because $3\notin\mathsf L(U_iU_j)$, a contradiction to the maximality condition in Equation \eqref{maximal-lexico}. Therefore
$\{g_2,g_3,g_4\}\cap \{f_1,f_2,f_3,f_1+f_2+f_3\}=\emptyset$ which implies that $g_2g_3g_4=(f_1+f_2)(f_2+f_3)(f_1+f_3)$ and hence $U_j=f_4(f_1+f_2+f_4)(f_2+f_3+f_4)(f_1+f_3+f_4)$.

\medskip
Now suppose that $U_i\neq U_j$,  and assume to the contrary there exists a $t\in [1,k_0]\setminus\{i,j\}$ such that $|U_t|=4$. If $U_t\notin \{U_i,U_j\}$, then $U_iU_jU_t=\big(f_1f_2f_3(f_1+f_2+f_3)\big)\big( (f_1+f_4)(f_2+f_4)(f_3+f_4)(f_1+f_2+f_3+f_4)\big)\big( f_4(f_1+f_2+f_4)(f_2+f_3+f_4)(f_1+f_3+f_4)\big)=\big(f_1(f_2+f_4)(f_1+f_2+f_4)\big)\big( f_2(f_3+f_4)(f_2+f_3+f_4)\big)\big( f_3(f_1+f_4)(f_1+f_3+f_4) \big)\big( f_4(f_1+f_2+f_3)(f_1+f_2+f_3+f_4)\big)$.  Thus $4\in \mathsf L(U_iU_jU_t)$ and hence $k+1\in \mathsf L(A)$, a contradiction. If $U_t\in \{U_i,U_j\}$, then we still have that $4\in \mathsf L(U_iU_jU_t)$ and hence $k+1\in \mathsf L(A)$, a contradiction.

\smallskip
5. Since $3\notin \mathsf L(U_iU_j)$, we obtain that $|\gcd (U_i, U_j)|=0$. Every $h\in \supp (U_j)$ is the sum of two distinct  elements from $\supp(U_i)$. Thus there exist $g_1,g_2,g_3\in G$ with $g_1g_2g_3\t U_i$ such that $U_j=(g_1+g_2)(g_2+g_3)(g_3+g_1)$.
Now we choose an element $t\in [1,k_0]\setminus\{i,j\}$, and have to show that $|U_t|=3$. If $|U_t|=5$, then $U_t=U_i$ by 2. and hence $4\in \mathsf L(U_iU_tU_j)$ which implies that $k+1\in \mathsf L(A)$, a contradiction. Suppose that $|U_t|=4$ and let $U_i=g_1g_2g_3g_4g_5$, where $g_4,g_5\in G$.  Then $|\gcd(U_i,U_t)|=3$ by 3. and by symmetry we only need to consider $\supp(U_t)\setminus \supp(U_i)\subset \{g_1+g_2, g_1+g_4,g_4+g_5\}$. All the three cases imply that  $4\in \mathsf L(U_iU_tU_j)$. It follows that that $k+1\in \mathsf L(A)$, a contradiction.

\smallskip
6. If  $| \gcd (U_i, U_j)|=2$, then $3\in \mathsf L(U_iU_j)$, a contradiction. If $|\gcd (U_i, U_j)|=1$, then $U_1U_2=W_1W_2$ with $W_1,W_2\in \mathcal A(G)$ and $|W_2|=5$, a contradiction to the maximality condition in Equation \eqref{maximal-lexico}. Thus we obtain that $|\gcd (U_i, U_j)|=0$. Let $(f_1,f_2,f_3,f_4)$ be a basis and $U_i=f_1f_2f_3(f_1+f_2+f_3)$. Since $|U_j|=3$,  there exists a $g \in \supp ( U_j)$ such that $g\in \langle f_1,f_2,f_3\rangle$. Since $|\gcd (U_i, U_j)|=0$, there exist $g_1,g_2\in G$ such that $g_1g_2\t U_i$ and $g=g_1+g_2$.

Now we choose an element $t\in [1,k_0]\setminus\{i,j\}$ and have to show that $|U_t|=3$.  Note that  5. implies that  $|U_t|\neq 5$, and we assume to the contrary that   $|U_t|=4$. Without restriction we may assume that $g=f_1+f_2$, and  by 4., we  distinguish three cases. If $U_t=U_i$, then $f_1^2,f_2^2, gU_i(f_1f_2)^{-1}, U_t(f_1f_2)^{-1}U_jg^{-1}$ are atoms and hence $4\in \mathsf L(U_iU_tU_j)$ which implies that $k+1\in \mathsf L(A)$, a contradiction. If $U_t=(f_1+f_4)(f_2+f_4)(f_3+f_4)(f_1+f_2+f_3+f_4)$, then $g(f_1+f_2+f_3)(f_1+f_2+f_3+f_4)(f_1+f_4)f_2$ is an atom of length $5$ dividing $U_iU_jU_t$ and $U_iU_jU_t(g(f_1+f_2+f_3)(f_1+f_2+f_3+f_4)(f_1+f_4)f_2)^{-1}$ is a product of two atoms, a contradiction to the maximality condition in Equation \eqref{maximal-lexico}. If  $U_t=f_4(f_1+f_2+f_4)(f_2+f_3+f_4)(f_1+f_3+f_4)$, then  $gf_2f_3f_4(f_1+f_3+f_4)$ is an atom of length $5$ dividing $U_iU_jU_t$ and $U_iU_jU_t(gf_2f_3f_4(f_1+f_3+f_4))^{-1}$ is a product of two atoms, a contradiction to the maximality condition in Equation \eqref{maximal-lexico}.

\smallskip
7. If $|\gcd (U_i, U_j)|\ge 2$, then $U_i=U_j$ and hence $3\in \mathsf L(U_iU_j)$ which implies that $k+1\in \mathsf L(A)$, a contradiction.
If $|\gcd (U_i, U_j)|=1$, then $U_iU_j=W_1W_2$ with $W_1,W_2 \in \mathcal A (G)$, $|W_1|=2$, and $|W_2|=4$, a contradiction to the maximality condition in Equation \eqref{maximal-lexico}. Therefore $|\gcd (U_i, U_j)|=0$. This completes the proof of  {\bf A}.
\qed

\medskip
Note that  {\bf  A}.5 implies that   $\{|U_i|\mid i\in [1,k_0]\}\neq \{3,4,5\}$. Thus it remains to discuss the following six subcases.

\medskip
\noindent
CASE 4.1. \,   $\{|U_i|\mid i\in [1,k_0]\}=\{3, 5\}$.

By {\bf  A.}5 and {\bf A.}7, we obtain that $|U_1|=5$, $|U_2|=\ldots =|U_{k_0}|=3$, and that $U_1\cdot\ldots \cdot U_{k_0}$ is square-free. This implies that $\max \mathsf L(U_1\cdot \ldots \cdot U_{k_0})=k_0$, and hence $\max \mathsf L(A)=\max \mathsf L(U_0\cdot \ldots \cdot U_{k_0})+k-k_0=k$, a contradiction.

\smallskip
\noindent
CASE 4.2. \,  $\{|U_i|\mid i\in [1,k_0]\}=\{3,4\}$.

By {\bf  A.}6 and {\bf A.}7, we obtain that $|U_1|=4$, $|U_2|=\ldots =|U_{k_0}|=3$, and that $U_1\cdot\ldots \cdot U_{k_0}$ is square-free. This implies that $\max \mathsf L(U_1\cdot \ldots \cdot U_{k_0})=k_0$, and hence
$\max \mathsf L(A)=\max \mathsf L(U_0\cdot \ldots \cdot U_{k_0})+k-k_0=k$, a contradiction.

\smallskip
\noindent
CASE 4.3. \,  $\{|U_i|\mid i\in [1,k_0]\}=\{3\}$.

By {\bf  A.}7, we obtain that $U_1\cdot\ldots \cdot U_{k_0}$ is square-free. This implies that $\max \mathsf L(U_1\cdot \ldots \cdot U_{k_0})=k_0$, and hence $\max \mathsf L(A)=\max \mathsf L(U_0\cdot \ldots \cdot U_{k_0})+k-k_0=k$, a contradiction.

\smallskip
\noindent
CASE 4.4.   $\{|U_i|\mid i\in [1,k_0]\}=\{5\}$.

By {\bf  A.}2, it follows that $A=U_1^{k_0} U_{k_0+1}\cdot \ldots \cdot U_k$. If $\supp (U_{k_0+1}\cdot \ldots \cdot U_k)\subset \supp(U_1)$, then $\Delta (\mathsf L (A))=\{3\}$, a contradiction. Thus there exists $j\in [k_0+1,k]$ such that $U_j=g^2$ for some  $g\not\in  \supp(U_1)$. Then there exist $g_1,g_2\in G$ such that $g_1g_2\t U_1$ and $g=g_1+g_2$. It follows that  $U_1^2U_j=g_1^2 g_2^2 (U_1(g_1g_2)^{-1}g)^2$, where $g_1^2, g_2^2, U_1(g_1g_2)^{-1}g$ are atoms. Therefore $4\in \mathsf L(U_1^2U_j)$ and hence $k+1\in \mathsf L(A)$, a contradiction.

\smallskip
\noindent
CASE 4.5. \,  $\{|U_i|\mid i\in [1,k_0]\}=\{4\}$.

Assume to the contrary, that $k_0\ge 3$.  Then {\bf  A.}4 implies that $U_1\cdot \ldots \cdot U_{k_0}=U_1^{k_0}$, and we set $G_1=\langle \supp(U_1)\rangle$. If there exists $g\in \supp(U_{k_0+1}\cdot \ldots \cdot U_k)$ such that $g\in G_1\setminus \supp(U_1)$, then $4\in \mathsf L(U_1^2g^2)$ and hence $k+1\in \mathsf L(A)$, a contradiction. If there exist distinct $g_1,g_2\in \supp(U_{k_0+1}\cdot \ldots \cdot U_k)$ such that $g_1\notin G_1$ and $g_2\notin G_1$, then $g_1+g_2\in G_1$. Since $g_1+g_2\in \supp(U_1)$ implies that $5\in \mathsf L(U_1^2g_1^2g_2^2)$ and $k+1\in \mathsf L(A)$, we obtain that $g_1+g_2\in G_1\setminus \supp(U_1)$. Then $U_1^2g_1^2g_2^2=W_1^2W_2W_3$ where $W_1,W_2,W_3\in \mathcal A(G)$ with $|W_1|=4$, $W_1\neq U_1$, and $|W_2|=|W_3|=2$. Say $U_{k_0+1}=g_1^2$ and $U_{k_0+2}=g_2^2$. Then $W_1^2U_3\cdot \ldots \cdot U_{k_0} W_1W_2U_{k_0+3}\cdot\ldots\cdot U_k$ is a factorization of $A$ of length $k$ satisfying the maximality condition of Equation \eqref{maximal-lexico} and hence applying {\bf  A.}4 to this factorization, we obtain a contradiction. Therefore
 $\supp(U_{k_0+1}\cdot \ldots \cdot U_k)\subset \supp(U_1)\cup \{g\}$ where $g$ is independent from $\supp(U_1)$ and hence $\supp(A)\subset \supp(U_1)\cup \{g\}$ which implies that $\Delta (\mathsf L (A))=\{2\}$, a contradiction.

Therefore it follows  that $k_0=2$. Then $U_1=U_2$ (since otherwise we would have $\max\mathsf L(A)=k$ by $U_1U_2$ is square-free), and we obtain that $\mathsf L(A)=[\min \mathsf L(A), k]\cup\{k+2\}$.
Assume to the contrary that  there exists a $W \in \mathcal A (G)$ such that  $W\t A$ and  $|W|=5$. Then there exist $g,g_1,g_2\in G$ such that $g\t U_1$, $g_1g_2\t W$, and $g=g_1+g_2$, and hence  $|\{g_1,g_2\}\cap \supp(U_1)|\le 1$.
If $\{g_1,g_2\}\cap \supp(U_1)=\emptyset$, then there exist distinct $t_1,t_2\in [k_0+1,k]$ such that $U_{t_1}=g_1^2$ and $U_{t_2}=g_2^2$. Thus $5\in \mathsf L(U_1U_2U_{t_1}U_{t_2})$ and hence $k+1\in \mathsf L(A)$, a contradiction.
Suppose that $|\{g_1,g_2\}\cap \supp(U_1)|= 1$, say $g_1\notin \supp(U_1)$ and $g_2\in \supp(U_1)$. Then there exists $t\in [k_0+1,k]$ such that $U_t=g_1^2$. Therefore $4\in \mathsf L(U_1U_2U_t)$ and hence $k+1\in \mathsf L(A)$, a contradiction.

Thus  every atom $W$ with  $W\t A$ has length   $|W| < 5$. It follows that $\min \mathsf L(A)\ge \lceil \frac{2\max\mathsf L(A)}{4}\rceil=\lceil \frac{\max\mathsf L(A)}{2}\rceil $ and hence $\mathsf L(A)\in \mathcal L_5$.

\smallskip
\noindent
CASE 4.6. \,  $\{|U_i|\mid i\in [1,k_0]\}=\{4,5\}$.

By {\bf  A}.2, {\bf  A}.3, and {\bf A.}4, we obtain that $|\{U_1,\ldots, U_{k_0}\}|=2$.  Without restriction we may assume that  $U_1\cdot\ldots\cdot U_{k_0}=U^{k_1}V^{k_2} $ where  $k_1,k_2\in \N$ with $k_0=k_1+k_2$ and $V=e_1e_2e_3(e_1+e_2+e_3)$ (recall that $(e_1, \ldots, e_4)$ is a basis of $G$, $e_0=e_1+e_2+e_3+e_4$, and $U=e_1e_2e_3e_4 e_0$).
We claim that
\begin{itemize}
\item $\supp(U_{k_0+1}\cdot \ldots\cdot U_{k})\subset \supp(UV)$.
\item If $k_1\ge 2$, then $\supp(U_{k_0+1}\cdot \ldots\cdot U_{k})\subset \supp(U)$, and
\item if $k_2\ge 2$, then $\{e_4,e_0\}\not \subset \supp(U_{k_0+1}\cdot \ldots\cdot U_{k})$.
\end{itemize}

Indeed,  assume to the contrary that $g\in \supp(U_{k_0+1}\cdot \ldots\cdot U_{k})\setminus \supp(UV)$. By symmetry, we only need to consider $g=e_1+e_2$ and $g=e_1+e_4$ and both cases imply that $4\in \mathsf L(UVg^2)$, a contradiction to $k+1\notin\mathsf L(A)$. If $k_1\ge 2$ and $g=e_1+e_2+e_3\in \supp(U_{k_0+1}\cdot \ldots\cdot U_{k})$, then $4\in \mathsf L(U^2g^2)$ and $k+1\in \mathsf L(A)$, a contradiction. Thus if $k_1\ge 2$, then $\supp(U_{k_0+1}\cdot \ldots\cdot U_{k})\subset \supp(U)$. If $k_2\ge 2$ and $\{e_4,e_0\} \subset \supp(U_{k_0+1}\cdot \ldots\cdot U_{k})$, then $5\in \mathsf L(V^2e_4^2e_0^2)$ and hence $k+1\in \mathsf L(A)$, a contradiction.

Thus all three claims are proved, and we distinguish three subcases.

\smallskip
\noindent
CASE 4.6.1. \, $k_1=1$.

If $\{e_4,e_0\}\not\subset \supp(U_{k_0+1}\cdot\ldots\cdot U_{k}) $, then $\mathsf L(A)=\mathsf L(UV^{k_2})+k-k_0=\mathsf L(V^{k_0})+k-k_0$ and hence $\Delta (\mathsf L (A))=\{2\}$, a contradiction.
If $\{e_4,e_0\}\subset \supp(U_{k_0+1}\cdot\ldots\cdot U_{k})$, then $k_2=1$ and  we may assume that $U_{k_0+1}=e_4^2$ and that   $U_{k_0+2}=e_0^2$.    Then $\mathsf L(A)=\mathsf L(UVU_{k_0+1}U_{k_0+2})+k-k_0-2=\{k-1,k,k+2\}$ with $k\ge 4$, and hence $\mathsf L(A)\in \mathcal L_5$.

\smallskip
\noindent
CASE 4.6.2. \, $k_1\ge 2$ and $k_2\ge 2$.

Thus $\supp(U_{k_0+1}\cdot\ldots\cdot U_{k})$ is independent  and it follows that  $\supp(U_{k_0+1}\cdot\ldots\cdot U_{k})\subset \{e_1,e_2,e_3,e_4\}$ or $\supp(U_{k_0+1}\cdot\ldots\cdot U_{k})\subset \{e_1,e_2,e_3,e_0\}$.  Then we have $\mathsf L(A)=\mathsf L(U^{k_1}V^{k_2})+k-k_0$.  By  Equation \eqref{eq9}, $\mathsf L(U^{k_1}V^{k_2})$ is equal to
\begin{align*}
\left\{\begin{aligned}
&\{k_0\}\cup [k_0+2, 5\lfloor k_1/2\rfloor+4(k_0/2-\lfloor k_1/2\rfloor)] &&\text{ if $k_0=k_1+k_2$ is even}\,,\\
&\{k_0\}\cup [k_0+2, 5\lfloor k_1/2\rfloor+4((k_0-1)/2-\lfloor k_1/2\rfloor)+1] &&\text{ if $k_0=k_1+k_2$ is odd}\,.
\end{aligned}\right.
\end{align*}
 Let $\ell=\max\mathsf L(U^{k_1}V^{k_2})-k_0-2$ and hence \begin{align*}
\ell=\left\{\begin{aligned}
&k_0+\lfloor\frac{k_1}{2}\rfloor-2 &\text{ if $k_0\ge 4$ is even}\,,\\
&k_0+\lfloor\frac{k_1}{2}\rfloor-3 &\text{ if $k_0\ge 5$ is odd}\,.
\end{aligned}\right.
\end{align*} Since $k_1\ge 2$ and $k_2\ge 2$, we obtain that $\ell\ge 3$ and $\ell\neq 4$.  We also have that
\begin{align*}
\ell\le \left\{\begin{aligned}
&k_0+\lfloor\frac{k_0-2}{2}\rfloor-2=\frac{3k_0}{2}-3 &\text{ if $k_0$ is even}\,,\\
&k_0+\lfloor\frac{k_0-2}{2}\rfloor-3=\frac{3k_0-9}{2} &\text{ if $k_0$ is odd}\,.
\end{aligned}\right.
\end{align*}
Therefore \begin{align*}
k_0\ge \left\{\begin{aligned}
&\frac{2\ell}{3}+2 &\text{ if $k_0$ is even}\,,\\
&\frac{2\ell}{3}+3 &\text{ if $k_0$ is odd}\,,
\end{aligned}\right.
\end{align*}
and hence \begin{align*}k_0\ge \left\{\begin{aligned}
&2\lceil\frac{\ell}{3}\rceil+2 &\text{ if $k_0$ is even}\,,\\
&2\lceil\frac{\ell}{3}\rceil+2 &\text{ if $k_0$ is odd}\,.
\end{aligned}\right.
\end{align*}
It follows that $\mathsf L(U^{k_1}V^{k_2})\in \mathcal L_6$ which implies that
 $\mathsf L(A)\in \mathcal L_6$.

\smallskip
\noindent
CASE 4.6.3. \, $k_1\ge 2$ and $k_2=1$.

Then  $\supp(U_{k_0+1}\cdot\ldots\cdot U_{k})\subset \{e_1,e_2,e_3,e_4, e_0\}$. If $\{e_4,e_0\}\not\subset \supp(U_{k_0+1}\cdot\ldots\cdot U_{k}) $, then
$\mathsf L(A)=\mathsf L(U^{k_1}V)+k-k_0$ is equal to
\begin{align*}
\left\{\begin{aligned}
&k+\{0,2,3\}+3 \cdot [0, k_1/2-1], &&\text{ if $k_1$ is even}\,,\\
&k+\{0,2,3\}+3 \cdot [0,  (k_1-1)/2-1]\cup \{k+(3k_1-3)/2+2\}, &&\text{ if $k_1$ is odd}\,
\end{aligned}\right.
\end{align*}
by Equation \eqref{eq10}.
Therefore $\mathsf L(A)\in \mathcal L_8$.

If $\{e_4,e_0\}\subset \supp(U_{k_0+1}\cdot\ldots\cdot U_{k})$, then we may assume that $U_{k_0+1}=e_4^2$ and that $U_{k_0+2}=e_0^2$. Thus \begin{align*}
\mathsf L(A)&=\mathsf L(U^{k_1}VU_{k_0+1}U_{k_0+2})+k-k_0-2\\
&=\left\{\begin{aligned}
&k-1+\{0,1,3\}+3 \cdot [0, (k_1+1)/2-1], &&\text{ if $k_1$ is odd}\,,\\
&k-1+\{0,1,3\}+3 \cdot [0,  k_1/2-1]\cup \{k+3k_1/2+1\}, &&\text{ if $k_1$ is even}\,,
\end{aligned}\right.
\end{align*}
by Equation \eqref{eq11}
and hence $\mathsf L(A)\in \mathcal L_7$.
 \end{proof}

\medskip
\section{Sets of lengths of weakly Krull monoids} \label{5}
\medskip

It is well-known that -- under reasonable algebraic finiteness conditions --  the Structure Theorem for Sets of Lengths holds for weakly Krull monoids (as it is true for transfer Krull monoids of finite type, see Proposition \ref{3.2}). In spite of this common feature we will demonstrate that systems of sets of lengths for a variety of classes of weakly Krull monoids are different from the  system of sets of lengths of any transfer Krull monoid (apart from well-described exceptional cases; see Theorems \ref{5.5} to \ref{5.8}). Since  half-factorial monoids are transfer Krull monoids, and since there are half-factorial weakly Krull monoids,  half-factoriality is such a natural exceptional case.

So far there are only a couple of results in this direction. In \cite{Fr13a},  Frisch   showed that $\Int (\Z)$, the ring of integer-valued polynomials over $\Z$, is not a transfer Krull domain (nevertheless, the system of sets of lengths of $\Int (\Z)^{\bullet}$ coincides with $\mathcal L (G)$ for an infinite abelian group $G$). To mention a result by Smertnig, let $\mathcal O$ be the ring of integers of an algebraic number field $K$, $A$ a central simple algebra over $K$, and $R$ a classical maximal $\mathcal O$-order of $A$. Then $R$ is a non-commutative Dedekind domain and in particular an HNP ring (see \cite[Sections 5.2 and 5.3]{Mc-Ro01a}). Furthermore, $R$ is a transfer Krull domain if and only if every stably free left $R$-ideal is free (\cite[Theorems 1.1 and 1.2]{Sm13a}).

\smallskip
We gather basic concepts and properties of weakly Krull monoids and domains (Propositions \ref{5.1} and \ref{5.2}). In  the remainder of this section, all monoids and domains are supposed to be commutative.

Let $H$ be a  monoid (hence commutative, cancellative, and with unit element). We denote by $\mathsf q (H)$ the quotient group of $H$, by $H_{\red}=H/H^{\times}$ the associated reduced monoid of $H$, by $\mathfrak X (H)$ the set of minimal nonempty prime $s$-ideals of $H$,  and by $\mathfrak m = H \setminus H^{\times}$ the maximal $s$-ideal. Let $\mathcal I_v^* (H)$ denote the monoid of $v$-invertible $v$-ideals of $H$ (with $v$-multiplication). Then $\mathcal F_v (H)^{\times} = \mathsf q ( \mathcal I_v^* (H))$ is the quotient group of fractional  $v$-invertible $v$-ideals, and $\mathcal C_v (H) = \mathcal F_v (H)^{\times}/\{ xH \mid x \in \mathsf q (H)\}$ is the $v$-class group of $H$ (detailed presentations of ideal theory in commutative monoids can be found in \cite{HK98, Ge-HK06a}). We denote by $\widehat H \subset \mathsf q (H)$ the complete integral closure of $H$, and by $(H \DP \widehat H) = \{ x \in \mathsf q (H) \mid x \widehat H \subset H \} \subset H$ the conductor of $H$. A submonoid $S \subset H$ is said to be saturated if $S = \mathsf q (S) \cap H$. For the definition and discussion of the concepts of being faithfully saturated or being locally tame we refer to \cite[Sections 1.6 and 3.6]{Ge-HK06a}.

To start with the local case, we recall that  $H$ is said to be
\begin{itemize}
\item {\it primary} if $\mathfrak m \ne \emptyset$ and for all $a, b \in \mathfrak m$ there is an $n \in \N$ such that $b^n \subset aH$.

\item  {\it strongly primary} if $\mathfrak m \ne \emptyset$ and for every $a \in \mathfrak m$ there is an $n \in \N$ such that $\mathfrak m^{n} \subset aH$. We denote by $\mathcal M (a)$ the smallest $n$ having this property.

\item a {\it discrete valuation monoid} if it is primary and contains a prime element (equivalently, $H_{\red} \cong (\N_0,+)$).
\end{itemize}
Furthermore,  $H$ is said to be
\begin{itemize}
\item  {\it weakly Krull} (\cite[Corollary 22.5]{HK98}) if
\[
H = \bigcap_{{\mathfrak p} \in \mathfrak X (H)} H_{\mathfrak p}  \quad \text{and} \quad \{{\mathfrak p} \in \mathfrak X (H) \mid a \in {\mathfrak p}\} \quad \text{is finite for all} \ a \in H \,.
\]

\item {\it weakly factorial} if  one of the following equivalent conditions is satisfied (\cite[Exercise 22.5]{HK98}){\rm \,:}
     \begin{itemize}
     \item Every non-unit is a finite product of primary elements.

     \item $H$ is a weakly Krull monoid with trivial $t$-class group.
     \end{itemize}
\end{itemize}
Clearly, every localization $H_{\mathfrak p}$ of  $H$ at a minimal prime ideal $\mathfrak p \in \mathfrak X (H)$ is primary, and a weakly Krull monoid $H$ is $v$-noetherian if and only if  $H_{\mathfrak p}$ is $v$-noetherian for each $\mathfrak p \in \mathfrak X (H)$. Every $v$-noetherian primary monoid $H$ is strongly primary and $v$-local, and if $(H \DP \widehat H)\ne \emptyset$, then $H$ is locally tame  (\cite[Lemma 3.1 and Corollary 3.6]{Ge-Ha-Le07}).
Every strongly primary monoid is a primary \BF-monoid (\cite[Section 2.7]{Ge-HK06a}). Therefore the coproduct of a family of strongly primary monoids is a \BF-monoid,
and every coproduct of a family of primary monoids is weakly factorial. A $v$-noetherian weakly Krull monoid $H$ is weakly factorial if and only if $\mathcal C_v (H)=0$ if and only if $H_{\red} \cong \mathcal I_v^* (H)$.

By a numerical monoid $H$ we mean an additive submonoid of $(\N_0, +)$ such that $\N_0 \setminus H$ is finite. Clearly, every numerical monoid is $v$-noetherian  primary, and hence it is strongly primary.
Note that a numerical monoid is half-factorial if and only if it is equal to $(\N_0,+)$.

Let $R$ be a domain. Then $R^{\bullet} = R \setminus \{0\}$ is a monoid, and all arithmetic and ideal theoretic concepts introduced for monoids will be used for domains in the obvious way.
The domain $R$ is weakly Krull (resp. weakly factorial) if and only if its multiplicative monoid $R^{\bullet}$  is weakly Krull (resp. weakly factorial).
Weakly Krull domains were introduced by Anderson, Anderson, Mott, and Zafrullah (\cite{An-An-Za92b, An-Mo-Za92}). We recall some most basic facts and refer to an extended list of weakly Krull domains and monoids  in \cite[Examples 5.7]{Ge-Ka-Re15a}.
The monoid $R^{\bullet}$ is primary if and only if $R$ is one-dimensional and local. If $R$ is one-dimensional local Mori, then $R^{\bullet}$ is strongly primary and   locally tame (\cite{Ge-Ro18a}). Furthermore, every one-dimensional semilocal Mori domain with nontrivial conductor is weakly factorial and the same holds true for generalized Cohen-Kaplansky domains. It can be seen from the definition that  one-dimensional noetherian domains are $v$-noetherian weakly Krull domains.

Proposition \ref{5.1} summarizes the main algebraic properties of $v$-noetherian weakly Krull monoids and Proposition \ref{5.2} recalls that their arithmetic can be studied via weak transfer homomorphisms to  weakly Krull monoids of very special form.

\medskip
\begin{proposition} \label{5.1}
Let $H$ be a $v$-noetherian weakly Krull monoid.
\begin{enumerate}
\item The monoid $\mathcal I_v^* (H)$ is isomorphic to the coproduct of $(H_{\mathfrak p})_{\red}$ over all $\mathfrak p \in \mathfrak X (H)$. 
In particular, $\mathcal I_v^* (H)$ is weakly factorial and $v$-noetherian.

\smallskip
\item Suppose that  $\mathfrak f = (H \DP \widehat H) \ne \emptyset$. We set $\mathcal P^* = \{ \mathfrak p \in \mathfrak X (H) \mid \mathfrak p \supset \mathfrak f \}$, and $\mathcal P = \mathfrak X (H) \setminus \mathcal P^*$.
    \begin{enumerate}
    \item Then $\widehat H$ is Krull, $\mathcal P^*$ is finite, and  $H_{\mathfrak p}$ is a discrete valuation monoid for each $\mathfrak p \in \mathcal P$. In particular, $\mathcal I_v^* (H)$ is isomorphic to $\mathcal F (\mathcal P) \times \prod_{\mathfrak p \in \mathcal P^*} (H_{\mathfrak p})_{\red}$.

    \smallskip
    \item If $\mathcal H = \{ aH \mid a \in H \}$ is the monoid of principal ideals of $H$, then $\mathcal H \subset \mathcal I_v^* (H)$ is saturated. Moreover, if $H$ is the multiplicative monoid of a domain, then all monoids $H_{\mathfrak p}$ are locally tame and $\mathcal H \subset \mathcal I_v^* (H)$ is faithfully saturated.
    \end{enumerate}
\end{enumerate}
\end{proposition}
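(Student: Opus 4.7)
The plan is to build the structure from the local pieces. For part 1, I would define a monoid homomorphism
\[
\phi \colon \mathcal I_v^*(H) \longrightarrow \coprod_{\mathfrak p \in \mathfrak X(H)} (H_{\mathfrak p})_{\red}, \qquad I \longmapsto \bigl( (IH_{\mathfrak p})_{\red} \bigr)_{\mathfrak p} \,,
\]
and verify that it is an isomorphism in three steps. Well-definedness (finite support) is exactly the second defining condition of a weakly Krull monoid applied to a generator $a \in I$. Injectivity follows from the standard identity $I = \bigcap_{\mathfrak p \in \mathfrak X(H)} IH_{\mathfrak p}$ for $v$-invertible $v$-ideals, which itself rests on the first condition $H = \bigcap_{\mathfrak p} H_{\mathfrak p}$ together with $v$-noetherianity. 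For surjectivity, given a tuple with finite support supported on $\mathfrak p_1, \ldots, \mathfrak p_n$, lift each coordinate to a principal fractional ideal $x_iH_{\mathfrak p_i}$, intersect the extensions $x_iH_{\mathfrak p_i} \cap \bigcap_{\mathfrak q \ne \mathfrak p_i} H_{\mathfrak q}$, and apply $v$-closure; the $v$-noetherian hypothesis ensures $v$-invertibility. The ``in particular'' then follows because each $(H_{\mathfrak p})_{\red}$ is primary (the localization of a monoid at a minimal prime is primary), and every coproduct of primary monoids is weakly factorial; $v$-noetherianity of $\mathcal I_v^*(H)$ is inherited componentwise.

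For part 2(a), the argument hinges on exploiting a conductor element $c \in \mathfrak f$. The finiteness of $\mathcal P^*$ is the statement that a nonzero divisorial ideal in a $v$-noetherian weakly Krull monoid has only finitely many minimal prime divisors, which follows from part 1 applied to the divisorial ideal $\mathfrak f_v$. For $\mathfrak p \in \mathcal P$, I would choose $c \in \mathfrak f \setminus \mathfrak p$; then $c$ becomes a unit in $H_{\mathfrak p}$, so $\widehat H = c^{-1}(c \widehat H) \subset c^{-1}H \subset H_{\mathfrak p}$. Since the reverse inclusion $H_{\mathfrak p} \subset \widehat{H_{\mathfrak p}} \subset \widehat H$ is automatic after localizing both sides (the complete integral closure commutes with localization at primes disjoint from a regular element witnessing the conductor), one obtains $H_{\mathfrak p} = \widehat{H_{\mathfrak p}}$. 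A completely integrally closed, $v$-noetherian, primary monoid with a prime element is a discrete valuation monoid, so $(H_{\mathfrak p})_{\red} \cong (\N_0,+)$. Substituting into the decomposition from part 1 gives the stated factorization $\mathcal F(\mathcal P) \times \prod_{\mathfrak p \in \mathcal P^*} (H_{\mathfrak p})_{\red}$. Finally, $\widehat H$ is Krull because it is completely integrally closed by construction and $v$-noetherian, the latter being a consequence of $\mathfrak f \ne \emptyset$ and $H$ being $v$-noetherian (any chain of $v$-ideals of $\widehat H$ contracts to a chain in $H$ via multiplication by $c$).

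For part 2(b), the saturatedness of $\mathcal H \subset \mathcal I_v^*(H)$ is a short check: if $I \in \mathcal I_v^*(H)$ and its class in $\mathsf q(\mathcal I_v^*(H))$ lies in $\mathsf q(\mathcal H)$, then $I = xH$ as a fractional ideal for some $x \in \mathsf q(H)$, and the integrality $I \subset H$ forces $x \in H$, so $I \in \mathcal H$. The remaining assertions in the domain case will be the main obstacle, but they do not require a fresh argument: local tameness of each $H_{\mathfrak p}$ in a one-dimensional local Mori domain follows from the fact that such a domain has nontrivial conductor and its multiplicative monoid is strongly primary, which is the content of \cite[Lemma 3.1 and Corollary 3.6]{Ge-Ha-Le07}, and faithful saturation is then obtained by verifying the two defining conditions of \cite[Section 1.6]{Ge-HK06a}: namely that for every $a \in H$ the set $\{\mathfrak p \in \mathfrak X(H) \mid a \in \mathfrak p\}$ is finite (already used in part 1) and that the local tameness bounds assemble into a global bound after passing through the coproduct structure provided by part 2(a).
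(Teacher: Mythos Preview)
The paper's own proof of Proposition~\ref{5.1} consists entirely of citations: part~1 is referred to \cite[Proposition~5.3]{Ge-Ka-Re15a}, part~2(a) to \cite[Theorem~2.6.5]{Ge-HK06a}, and part~2(b) to \cite[Theorems~3.6.4 and~3.7.1]{Ge-HK06a}. In other words, the paper treats this proposition as a compilation of known results and does not supply an independent argument. Your sketch, by contrast, actually outlines how one would prove these statements, and your outline follows essentially the standard route that underlies the cited references: the localization map $I \mapsto (IH_{\mathfrak p})_{\mathfrak p}$ for part~1, the conductor argument $c\widehat H \subset H$ for part~2(a), and the direct verification of saturatedness for part~2(b). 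So in spirit there is no disagreement, only a difference in presentation depth.

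That said, a few steps in your sketch are imprecise and would need tightening if you intended this to stand alone. In part~2(a), the line ``the reverse inclusion $H_{\mathfrak p} \subset \widehat{H_{\mathfrak p}} \subset \widehat H$ is automatic after localizing both sides'' is not correct as written: complete integral closure does not in general commute with localization, and $\widehat{H_{\mathfrak p}} \subset \widehat H$ is not what is needed. The clean argument is that $c \in \mathfrak f \setminus \mathfrak p$ forces $\widehat H \subset H_{\mathfrak p}$, and then one localizes $\widehat H$ (which is Krull) at the unique prime above $\mathfrak p$ to conclude $H_{\mathfrak p}$ is itself a discrete valuation monoid. Also, the phrase ``a completely integrally closed, $v$-noetherian, primary monoid with a prime element is a discrete valuation monoid'' has the hypothesis and conclusion reversed: the existence of a prime element is the conclusion, and the correct input is that a primary Krull monoid is automatically a DVM. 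Finally, your description of faithful saturation in part~2(b) does not match the technical definition in \cite[Section~1.6]{Ge-HK06a}; that condition involves tameness and catenary-type bounds on the embedding, not merely finiteness of prime support, which is why the paper defers to \cite[Theorem~3.7.1]{Ge-HK06a} rather than arguing directly.
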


\begin{proof}
1. See \cite[Proposition 5.3]{Ge-Ka-Re15a}.

2. For (a) we refer to \cite[Theorem 2.6.5]{Ge-HK06a} and for (b) we refer to  \cite[Theorems 3.6.4 and 3.7.1]{Ge-HK06a}.
 \end{proof}

\medskip
\begin{proposition} \label{5.2}
Let $D = \mathcal F (\mathcal P) \time \prod_{i=1}^n D_i$ be a monoid, where $\mathcal P \subset D$ is a set of primes, $n \in \N_0$, and $D_1, \ldots, D_n$ are reduced primary monoids. Let $H \subset D$ be a saturated submonoid, $G = \mathsf q (D)/\mathsf q (H)$, and $G_\mathcal P = \{ \, p \, \mathsf q (H)  \mid p \in \mathcal P\} \subset G$ the set of classes containing primes.
\begin{enumerate}
\item There is a saturated submonoid $B \subset F=\mathcal F (G_{\mathcal P}) \time \prod_{i=1}^n D_i$ and a weak transfer homomorphism $\theta \colon H \to B$. Moreover, if $G$ is a torsion group, then there is a monomorphism $\mathsf q (F)/\mathsf q (B) \to G$.

\smallskip
\item If $G$ is a torsion group, then $H$ is weakly Krull.
\end{enumerate}
\end{proposition}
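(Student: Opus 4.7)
My plan is to build $\theta$ as a reduced block-style homomorphism and verify each claim by passing through a canonical class map. I define $\theta \colon D \to F$ as the unique monoid homomorphism sending each prime $p \in \mathcal P$ to its class $[p] = p\, \mathsf q(H) \in G_{\mathcal P}$ (viewed as a generator of $\mathcal F(G_{\mathcal P})$) and restricting to the identity on each $D_i$; then set $B := \theta(H) \subset F$. Since $D$ and $F$ are reduced, so are $H$ and $B$, and in particular $B^{\times} = H^{\times} = \{1\}$, so condition (T1) of Definition \ref{3.1} is automatic and only (WT2) requires work.

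Before tackling (WT2), I would establish the key identity governing saturation. Let $\pi \colon \mathsf q(D) \to G$ be the canonical epimorphism, so $H = \pi^{-1}(0) \cap D$ by saturation of $H$. On the $F$-side, define $\bar\pi \colon \mathsf q(F) \to G$ by $[p] \mapsto [p]$ on $G_{\mathcal P}$ and $d \mapsto \pi(d)$ for $d \in D_i$; a generator-wise check gives $\bar\pi \circ \theta = \pi$. This yields $B = \bar\pi^{-1}(0) \cap F$: the inclusion $\subset$ is immediate, and conversely any $f \in \bar\pi^{-1}(0) \cap F$ lifts to some $d \in D$ with $\theta(d) = f$ (choose any representatives in $\mathcal P$ for the $[p]$-components), and then $\pi(d) = \bar\pi(f) = 0$ places $d$ in $H$. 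Saturation of $B$ in $F$ is then immediate: $f \in \mathsf q(B) \cap F$ implies $\bar\pi(f) = 0$, hence $f \in B$.

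For (WT2), let $a \in H$ with $\theta(a) = v_1 \cdots v_k$ and $v_i \in \mathcal A(B)$. Because $\theta$ only collapses primes of $\mathcal P$ lying in the same class and acts as the identity on each $D_i$, this factorization in $F$ lifts to $a = w_1 \cdots w_k$ in $D$ with $\theta(w_i) = v_i$: match the $D_i$-components of each $v_i$ directly, and within each class $[p]$ distribute the primes appearing in the unique factorization of $a$ among the $w_i$ with the required multiplicities. Each $w_i$ lies in $H$ since $\pi(w_i) = \bar\pi(v_i) = 0$, and each $w_i$ is an atom of $H$: any nontrivial factorization $w_i = xy$ in $H$ would give $v_i = \theta(x)\theta(y)$ in $B$, contradicting atomicity of $v_i$ unless $\theta(x) = 1$ or $\theta(y) = 1$, which by reducedness forces $x = 1$ or $y = 1$. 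This gives (WT2) with the trivial permutation. For the monomorphism, $\bar\pi$ induces a well-defined homomorphism $\mathsf q(F)/\mathsf q(B) \to G$ since $\bar\pi(B) = 0$; injectivity in the torsion case rests on the refinement $\mathsf q(B) = \bar\pi^{-1}(0) \cap \mathsf q(F)$, which I would prove by a torsion trick: given $f = x/y \in \ker\bar\pi$ with $x, y \in F$, pick $m \in \N$ with $m\bar\pi(x) = 0$, set $z := x^{m-1} \in F$, and note that $xz, yz \in B$ and $f = (xz)/(yz) \in \mathsf q(B)$.

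For (2), the monoid $D = \mathcal F(\mathcal P) \time \prod_{i=1}^n D_i$ is visibly weakly Krull: its minimal nonempty prime $s$-ideals are those generated by the primes $p \in \mathcal P$ together with the maximal $s$-ideals of the primary components $D_i$, and any given element lies in only finitely many of these. When $G$ is torsion, the standard descent principle that a saturated submonoid of a weakly Krull monoid with torsion divisor class group is again weakly Krull then yields that $H$ is weakly Krull. The main obstacle I anticipate is bookkeeping: keeping the two parallel saturation computations — one for $H \subset D$ via $\pi$, one for $B \subset F$ via $\bar\pi$ — cleanly aligned, and pinning down the identity $B = \bar\pi^{-1}(0) \cap F$; once that dictionary is in place, the remaining verifications are essentially mechanical.
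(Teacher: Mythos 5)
Your write-up of part 1 is, in substance, the argument that the paper delegates to the literature (the paper's proof is just a citation of \cite[Propositions 3.4.7 and 3.4.8]{Ge-HK06a} for part 1 and of \cite[Lemma 5.2]{Ge-Ka-Re15a} for part 2), namely the standard block-monoid construction: send each $p\in\mathcal P$ to its class, take $B=\theta(H)$, and compare everything through the class maps $\pi$ and $\bar\pi$. Your key identity $B=\bar\pi^{-1}(0)\cap F$, the componentwise lifting of factorizations for (WT2) (matching the $D_i$-components and redistributing primes within each class), and the torsion trick showing $\ker\bar\pi\cap\mathsf q(F)=\mathsf q(B)$, hence injectivity of the induced map $\mathsf q(F)/\mathsf q(B)\to G$, are all correct. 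One step needs to be justified differently: both for (T1) and at the end of your atom argument you conclude from ``reducedness'' that $\theta(x)=1$ forces $x=1$. Reducedness of $H$ alone does not give this (a reduced monoid can be mapped onto the trivial monoid); what you actually need, and what is true here, is that $\theta$ preserves the length of the $\mathcal F(\mathcal P)$-component and is the identity on the $D_i$-components, so $\theta(x)=1$ with $x\in D$ forces all components of $x$ to be trivial. This one-line observation is exactly the content of $\theta^{-1}(B^{\times})=H^{\times}$, so it should be stated rather than called automatic.

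Part 2 is where your proposal has a real gap: you reduce it to ``the standard descent principle that a saturated submonoid of a weakly Krull monoid with torsion quotient group is again weakly Krull,'' but that principle, in this setting, is precisely the statement being proved -- it is \cite[Lemma 5.2]{Ge-Ka-Re15a}, which the paper cites, and in the generality you phrase it (an arbitrary weakly Krull overmonoid) it is not an off-the-shelf fact one can simply invoke. Either cite it precisely or prove it for the concrete $D$ at hand: for each minimal prime $\mathfrak P$ of $D$ (generated by a single $p\in\mathcal P$, or pulled back from the maximal ideal of some $D_i$) consider $V_{\mathfrak P}=\mathsf q(H)\cap D_{\mathfrak P}$; the torsion hypothesis guarantees (via saturation) that powers of elements of $\mathfrak P$ land in $H$, so that $\mathfrak P\cap H\neq\emptyset$ and $V_{\mathfrak P}$ is a primary overmonoid of $H$, and $H=\bigcap_{\mathfrak P}V_{\mathfrak P}$ is a finite-character intersection because the corresponding statement holds in $D$; one then identifies these monoids with the localizations of $H$ at its minimal primes (or appeals to the characterization of weakly Krull monoids by finite-character defining families of primary overmonoids, cf. \cite{HK98}). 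With that supplied, and the kernel-triviality point above repaired, your argument is a complete proof of the proposition.
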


\begin{proof}
1. See \cite[Propositions 3.4.7 and 3.4.8]{Ge-HK06a}.

\smallskip
2. See \cite[Lemma 5.2]{Ge-Ka-Re15a}.
 \end{proof}

\medskip
Note that, under the assumption of \ref{5.1}.2, the embedding $\mathcal H \hookrightarrow \mathcal I_v^* (H)$  fulfills the assumptions imposed on the embedding $H \hookrightarrow D$ in Proposition \ref{5.2}. Thus Proposition \ref {5.2} applies to $v$-noetherian  weakly Krull monoids.
For simplicity and in order to avoid repetitions, we formulate the next results (including Theorem \ref{5.7}) in the abstract setting of Proposition \ref{5.2}. However, $v$-noetherian weakly Krull domains and their monoids of $v$-invertible $v$-ideals are in the center of our interest.

If (in the setting of Proposition \ref{5.2}) $G_{\mathcal P}$ is finite, then $F=\mathcal F (G_P)\time \prod_{i=1}^n D_i$ is a finite product of primary monoids and $B \subset F$ is a saturated submonoid. We formulate the main structural result for sets of lengths in $v$-noetherian weakly Krull monoids  in this abstract setting.

\medskip
\begin{proposition} \label{5.3}
Let $D_1, \ldots, D_n$ be locally tame strongly primary monoids and $H \subset D=D_1 \time \ldots \time D_n$ a faithfully saturated submonoid such that $\mathsf q (D)/\mathsf q (H)$ is finite.
\begin{enumerate}
\item The monoid $H$ satisfies the Structure Theorem for Sets of Lengths.

\smallskip
\item There is a finite abelian group $G$ such that for every $L \in \mathcal L (H)$ there is a $y \in \N$ such that $y+L \in \mathcal L (G)$.

\end{enumerate}
\end{proposition}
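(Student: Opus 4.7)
The plan is to prove (1) by verifying the hypotheses of the abstract Structure Theorem \cite[Theorem 4.4.11]{Ge-HK06a}, and to prove (2) via a T-block-monoid realization over a single finite abelian group.

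\emph{Part (1).} I would first argue that $H$ is a locally tame \BF-monoid with finite set of distances. Each $D_i$ is a locally tame strongly primary monoid, hence a \BF-monoid with finite $\Delta(D_i)$, and these properties are inherited by the product $D = D_1 \time \ldots \time D_n$. The faithfully saturated embedding $H \hookrightarrow D$, combined with finiteness of the class group $\mathsf q(D)/\mathsf q(H)$, then transfers local tameness and finiteness of the set of distances to $H$ (see \cite[Theorems 3.6.4 and 3.7.1]{Ge-HK06a}). The Structure Theorem for Sets of Lengths for $H$ follows from \cite[Theorem 4.4.11]{Ge-HK06a}.

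\emph{Part (2).} I would apply Proposition \ref{5.2}.1 to obtain a weak transfer homomorphism $\theta\colon H \to B$ where $B \subset F = \mathcal F(G_{\mathcal P}) \time \prod_{i=1}^n D_i$ is saturated and $\mathsf q(F)/\mathsf q(B)$ embeds into the finite group $\mathsf q(D)/\mathsf q(H)$, whence $\mathcal L(H)=\mathcal L(B)$. The plan then is to construct a finite abelian group $G$ whose Davenport constant is large enough to accommodate both the class group of $B$ and a finite set of ``atom types'' from each locally tame $D_i$ (finiteness of the relevant types being guaranteed by boundedness of tame degrees). Each $b \in B$ is mapped to a zero-sum sequence $U_b \in \mathcal B(G)$ preserving its length spectrum up to a shift by some $y_b \in \N_0$; since multiplying $U_b$ by $0^{y_b}$ shifts its set of lengths by $y_b$ (Proposition \ref{3.3}.1), we obtain $y_b + \mathsf L_B(b) \in \mathcal L(G)$, as desired.

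The main obstacle lies in part (2): ensuring that a \emph{single} finite group $G$ works uniformly for all $b \in B$. This requires controlling, simultaneously for every element, the finite collection of atom types from each $D_i$ used in its factorizations, and representing them by a fixed set of atoms in $\mathcal B(G)$. This is essentially the content of the T-block construction in \cite[Sections 3.4 and 4.5]{Ge-HK06a}, and the uniformity is ensured by finiteness of the class group together with the bounded tame degrees of the $D_i$; once these ingredients are assembled, the verification that $y_b + \mathsf L_B(b) \in \mathcal L(G)$ reduces to a routine combinatorial matching.
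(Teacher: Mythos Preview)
Your argument for Part (1) is essentially the paper's approach: the paper simply cites \cite[Theorem 4.5.4]{Ge-HK06a}, which packages exactly the verification you describe (local tameness and finite $\Delta(H)$ for a faithfully saturated submonoid of a finite product of locally tame strongly primary monoids with finite class group, followed by the abstract Structure Theorem). Your reference to Theorem~4.4.11 is slightly off---that is the version for Krull monoids---but the substance is the same.

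For Part (2), however, you are taking a much harder and ultimately unjustified route. The paper does not attempt any direct combinatorial embedding of $B$ (or of the $T$-block monoid) into some $\mathcal B(G)$. Instead, Part (2) is an immediate corollary of Part (1) together with Proposition~\ref{3.2}.2 (Schmid's realization theorem): by Part (1) there exist a bound $M$ and a finite set $\Delta$ such that every $L \in \mathcal L(H)$ is an AAMP with difference in $\Delta$ and bound $M$; Proposition~\ref{3.2}.2 then furnishes a single finite abelian group $G$ in which every such AAMP occurs, up to a shift, as a set of lengths. No information about $H$, $B$, or the $D_i$ beyond the AAMP parameters is needed.

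Your proposed construction has a genuine gap. The locally tame strongly primary monoids $D_i$ may have infinitely many atoms (think of a one-dimensional local Mori domain), so there is no finite collection of ``atom types'' that one can encode once and for all by elements of a fixed finite group. Bounded tame degrees control how atoms interact in factorizations, but they do not yield a finite alphabet suitable for an embedding into $\mathcal B(G)$. The $T$-block construction of \cite[Sections 3.4 and 4.5]{Ge-HK06a} produces a transfer homomorphism to a $T$-block monoid $\mathcal B(G_{\mathcal P})\times \prod_i D_i$, not to a monoid of zero-sum sequences over a finite group; bridging that gap is precisely the content of the realization theorem, which is a substantial result in its own right and not a ``routine combinatorial matching.'' You should invoke Proposition~\ref{3.2}.2 directly.
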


\begin{proof}
1. follows from  \cite[Theorem 4.5.4]{Ge-HK06a}, and 2. follows from 1. and from Proposition \ref{3.2}.2.
 \end{proof}

The next  lemma on zero-sum sequences will be crucial in order to distinguish between sets of lengths in weakly Krull monoids and sets of lengths in transfer Krull monoids.

\medskip
\begin{lemma} \label{5.4}
Let $G$ be an abelian group and $G_0 \subset G$ a non-half-factorial subset.
\begin{enumerate}
\item  There exists a  half-factorial subset $G_1\subset G_0$ with $\mathcal B (G_1) \ne \{1\}$.

\smallskip
\item There are  $M \in \N$ and   zero-sum sequences $B_k \in \mathcal B (G_0)$ for every $k \in \N$ such that $2 \le |\mathsf L (B_k)| \le M$ but $\min \mathsf L (B_k) \to \infty$ as $k \to \infty$.
\end{enumerate}
\end{lemma}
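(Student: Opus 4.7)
The plan is to prove the two parts in order, using the half-factorial subset produced in part~1 to power the construction in part~2.

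For part~1 I would take $G_1$ to be a nonempty finite subset of $G_0$ that is minimal (under inclusion) with the property $\mathcal B (G_1) \ne \{1\}$; such a $G_1$ exists because the support of any zero-sum sequence is finite and $G_0$ is non-half-factorial. The claim is that $G_1$ is half-factorial. Suppose instead that $\mathcal B (G_1)$ contains two distinct atoms $U = \prod_{g \in G_1} g^{u_g}$ and $V = \prod_{g \in G_1} g^{v_g}$; by minimality both have full support $G_1$, so $u_g, v_g \ge 1$ for every $g \in G_1$. Choose $g^* \in G_1$ minimizing the ratio $u_{g^*}/v_{g^*}$ and set $W := \prod_{g \in G_1} g^{v_{g^*} u_g - u_{g^*} v_g}$. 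The exponents are nonnegative integers by the choice of $g^*$, the exponent at $g^*$ vanishes, and $\sigma (W) = v_{g^*}\sigma (U) - u_{g^*}\sigma (V) = 0$, so $W$ is a zero-sum sequence on a proper subset of $G_1$. If $W$ were $1$, then all ratios $u_g/v_g$ would equal the common value $\beta/\alpha$ (in lowest terms), and the sequence $Z := \prod_{g \in G_1} g^{v_g/\alpha}$ would satisfy $U = Z^\beta$ and $V = Z^\alpha$ in $\mathcal F (G_1)$; since $\sigma (Z)$ has order dividing $\gcd (\alpha, \beta) = 1$, the sequence $Z$ is a zero-sum, and atomicity of $U$ and $V$ forces $\alpha = \beta = 1$ and hence $U = V$, a contradiction. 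Therefore $W \ne 1$ is a nontrivial zero-sum on $\supp (W) \subsetneq G_1$, contradicting minimality.

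For part~2, fix $G_1$ from part~1, choose any atom $A \in \mathcal A (G_1)$, and pick $C \in \mathcal B (G_0)$ with $|\mathsf L (C)| \ge 2$, which exists by non-half-factoriality of $G_0$. Set $B_k := C \cdot A^k$. Because $\mathcal B (G_1)$ is a divisor-closed submonoid of $\mathcal B (G_0)$, the atoms of $\mathcal B (G_0)$ with support in $G_1$ are precisely the atoms of $\mathcal B (G_1)$, so $\mathsf L_{\mathcal B (G_0)}(A^k) = \{k\}$ by half-factoriality of $G_1$, whence $\mathsf L (B_k) \supset \mathsf L (C) + k$ and $|\mathsf L (B_k)| \ge |\mathsf L (C)| \ge 2$. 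Moreover, since $G_1 \cup \supp (C)$ is finite, its Davenport constant $\mathsf D (G_1 \cup \supp (C))$ is finite, and $\min \mathsf L (B_k) \ge |B_k|/\mathsf D (G_1 \cup \supp (C)) \to \infty$ as $k \to \infty$.

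The main obstacle is the uniform upper bound $|\mathsf L (B_k)| \le M$. For this I would analyze an arbitrary factorization $B_k = W_1 \cdots W_p$ in $\mathcal B (G_0)$ by partitioning the atoms into \emph{pure} ones (support contained in $G_1$) and \emph{mixed} ones (support meeting $G_0 \setminus G_1$). For every mixed atom $W$, the $G_1$-part of $W$ must be zero-sum free in $G_1$, since otherwise $W$ would have a proper zero-sum subsequence; hence its length is at most $\mathsf D (G_1) - 1$. Because each mixed atom also absorbs at least one term of $C$ with support outside $G_1$, the number of mixed atoms is at most $\sum_{g \notin G_1} \mathsf v_g (C)$, and consequently the product $X$ of the $G_1$-parts of all mixed atoms is a sequence of bounded length with support in the finite set $G_1$, drawn from a finite pool independent of $k$. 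For each feasible $X$, the pure atoms multiply to $A' := \big( \prod_{g \in G_1} g^{\mathsf v_g (C)} \big) \cdot A^k / X \in \mathcal B (G_1)$, whose unique factorization length (by half-factoriality of $G_1$) has the form $k + c_X$ for $k$ large enough that $X$ divides $\prod_{g \in G_1} g^{\mathsf v_g (C)} \cdot A^k$. The total factorization length therefore equals $k + c(X, |M|)$ with $(X, |M|)$ ranging over a finite set of tuples, giving the desired uniform bound on $|\mathsf L (B_k)|$.
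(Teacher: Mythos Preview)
Your argument is correct and follows essentially the same route as the paper. In Part~1 you take a minimal subset with $\mathcal B \ne \{1\}$ rather than a minimal non-half-factorial subset, but the key ratio trick is identical; in Part~2 you use the same sequences $B_k = C A^k$, and your pure/mixed atom decomposition makes explicit what the paper simply asserts as ``obvious'', namely that $\mathsf L(B_k) - k$ eventually stabilizes.
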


\begin{proof}
1. Since $G_0$ is not  half-factorial, there is a $B\in \mathcal B(G_0)$ such that $|\mathsf L(B)|>1$. Thus $\supp(B)$ is finite and not half-factorial, say $\supp(B) = \{g_1, \ldots, g_{\ell}\}$ with $\ell \ge 2$. Without restriction we may suppose that  every proper subset of $\{g_1, \ldots, g_{\ell} \}$ is half-factorial.
Assume to the contrary that  for every  subset $G_1\subsetneq \{g_1, \ldots, g_{\ell}\}$ we have $\mathcal B(G_1)=\{1\}$. Since $\{g_1, \ldots, g_{\ell}\}$ is minimal non-half-factorial, there is an atom $A_1 \in \mathcal A(\{g_1, \ldots, g_{\ell}\})$ such that $\mathsf v_{g_i}(A_1)>0$ for every $i \in [1, \ell]$. Since $\{g_1, \ldots, g_{\ell}\}$ is not half-factorial, there is an atom  $A_2 \in \mathcal A(\{g_1, \ldots, g_{\ell}\})$ distinct from $A_1$, say
\[
A_1=g_1^{k_1}\cdot\ldots\cdot g_{\ell}^{k_{\ell}} \quad \text{and} \quad
A_2=g_1^{t_1}\cdot\ldots\cdot g_{\ell}^{t_{\ell}}
\]
 where $k_i \in \N$ and $ t_i \in \N_0$ for every $i\in [1,\ell]$.
Let $\tau\in [1, \ell]$ such that $\frac{t_{\tau}}{k_{\tau}}= \max \{ \frac{t_j}{k_j} \mid j\in [1,\ell] \}$. Then $k_jt_{\tau}-t_jk_{\tau}\ge 0$ for every $j\in [1,\ell]$ whence
\[
W=A_2^{t_{\tau}} A_1^{-k_{\tau}} \in \mathcal B( \{g_1, \ldots, g_{\ell}\} \setminus\{g_{\tau}\})\,,
\]
which implies that $W=1$.
Therefore $\frac{t_{\tau}}{k_{\tau}}=\frac{t_j}{k_j}$ for every $j\in [1,\ell]$ and hence $A_1\t A_2$ or $A_2\t A_1$, a contradiction.

\smallskip
2. Let  $B\in \mathcal B(G_0)$ with $|\mathsf L(B)|>1$. By 1., there exists a half-factorial subset $G_1\subsetneq G_0$ such that $\mathcal B(G_1)\neq \{1\}$. Let $A\in \mathcal A(G_1)$ and $B_k=A^k B$ for every $k\in \N$.
Obviously there exists $k_0\in \N$ such that $\mathsf L(B_k)=\mathsf L(A^{k-k_0})+\mathsf L(B_{k_0})=k-k_0+\mathsf L(B_{k_0})$ for every $k\ge k_0$. Thus $|\mathsf L(B_k)|\le \max \mathsf L(B_{k_0})-\min \mathsf L(B_{k_0})$ and $\min \mathsf L (B_k) = k-k_0 + \min \mathsf L (B_{k_0})$.
 \end{proof}

\smallskip
Now we consider strongly primary monoids and work out a feature of their systems of sets of lengths which does not occur in the system of sets of lengths of any transfer Krull monoid. To do so we study the set $\{ \rho (L) \mid L \in \mathcal L (H) \}$ of  elasticities of all sets of lengths. This set  was studied first by  Chapman et al. in a series of papers (see \cite{B-C-C-K-W06, Ch-Ho-Mo06, B-C-H-M06, Ba-Ne-Pe17a}). Among others they showed that in an atomic monoid $H$, which has a prime element and an element $a \in H$ with $\rho ( \mathsf L (a) ) = \rho (H)$, every rational number $q$ with $1 \le q \le \rho (H)$ can be realized as the elasticity of some $L \in \mathcal L (H)$ (\cite[Corollary 2.2]{B-C-C-K-W06}).
Primary monoids, which are not discrete valuation monoids, have no prime elements and their set of elasticities is different, as we will see in the next theorem.
Statement 1. of Theorem \ref{5.5}  was  proved for numerical monoids  in \cite[Theorem 2.2]{Ch-Ho-Mo06}.

\medskip
\begin{theorem} \label{5.5}
Let $H$ be a strongly primary monoid that is not half-factorial.
\begin{enumerate}
\item There is a $\beta \in \Q_{>1}$ such that $\rho (L) \ge \beta$ for all $L \in \mathcal L (H)$ with $\rho (L) \ne 1$.

\smallskip
\item $\mathcal L (H) \ne \mathcal L (G_0)$ for any subset $G_0$ of any abelian group. In particular, $H$ is not a transfer Krull monoid.

\smallskip
\item If one of the following two conditions holds, then $H$ is locally tame.
      \begin{itemize}
      \smallskip
      \item $\sup \{ \min \mathsf L(c) \mid c \in H \} < \infty$.
      \smallskip
      \item There exists some $u \in H \setminus H^\times$ such that $\rho_{\mathcal M(u)} (H) < \infty$.
      \end{itemize}
      \smallskip
       If $H$ is locally tame, then  $\Delta (H)$ is finite, and there is an $M \in \N_0$ such that every $L \in \mathcal L (H)$ is an {\rm AAMP} with period $\{0, \min \Delta (H)\}$ and bound $M$.
\end{enumerate}
\end{theorem}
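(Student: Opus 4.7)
The plan is to argue the three parts in order, with Part 1 producing the key quantitative invariant that powers Part 2.

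\textbf{Part 1.} Since $H$ is not half-factorial, I fix a witness $c\in H$ with $|\mathsf L(c)|\ge 2$; set $k_0=\min \mathsf L(c)$ and $\ell_0=\max \mathsf L(c)>k_0$. The goal is a constant $\beta>1$, depending only on $H$, such that $\rho(L)\ge \beta$ for every $L\in \mathcal L(H)$ with $\rho(L)>1$. Strong primarity supplies, for each atom $u$, a finite $\mathcal M(u)$ with $\mathfrak m^{\mathcal M(u)}\subset uH$; equivalently, every $a\in H$ with $\max\mathsf L(a)\ge \mathcal M(u)$ is divisible by $u$. For $a$ with $|\mathsf L(a)|>1$, I would write $a=u_1\cdots u_k=v_1\cdots v_\ell$ with $k<\ell$, and split off iteratively copies of a fixed atom $u$ to produce a decomposition $a=u^n a'$ in which $a'$ has bounded maximum length (else $u$ again divides $a'$). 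The plan is to use this splitting both to lower-bound $\ell$ (via $\max\mathsf L(a)\ge n+\max\mathsf L(a')$) and to embed a power of the witness $c$ inside $a$ via $\mathfrak m^{\mathcal M(u)}\subset uH$, so that the elasticity $\ell_0/k_0$ is inherited by $a$. The main obstacle is quantifying the "overhead" of this splitting in terms of $H$-invariants alone, so that the ratio $\ell/k$ is uniformly bounded away from $1$; I expect this to come from pigeonholing on the atoms appearing in a shortest factorization combined with careful length accounting.

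\textbf{Part 2.} This follows from Part 1 together with Lemma \ref{5.4}. Suppose for contradiction that $\mathcal L(H)=\mathcal L(G_0)$ for some subset $G_0$ of an abelian group. If $G_0$ is half-factorial then $\mathcal L(G_0)\subset\{\{k\}\mid k\in \N_0\}$, forcing $H$ to be half-factorial, a contradiction. Otherwise Lemma \ref{5.4}(2) produces zero-sum sequences $B_k\in \mathcal B(G_0)$ with $2\le |\mathsf L(B_k)|\le M$ and $\min\mathsf L(B_k)\to\infty$, so that
\[
1<\rho(\mathsf L(B_k))\le 1+\frac{M-1}{\min\mathsf L(B_k)}\longrightarrow 1,
\]
contradicting the uniform lower bound $\beta>1$ from Part 1. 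The ``in particular'' assertion is then immediate from the definition of a transfer Krull monoid via weak transfer homomorphisms to $\mathcal B(G_0)$, since such a homomorphism forces $\mathcal L(H)=\mathcal L(G_0)$.

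\textbf{Part 3.} I would first show that each of the two conditions forces local tameness. Under (a), the uniform bound $C$ on $\min\mathsf L(c)$ controls, for each atom $u$, the length of a shortest factorization of $au$ passing through $u$, which bounds the tame degree at $u$ by a standard exchange argument. Under (b), finiteness of $\rho_{\mathcal M(u)}(H)$ combined with $\mathfrak m^{\mathcal M(u)}\subset uH$ bounds both the lengths and the atom-exchange cost between factorizations, again yielding local tameness. With local tameness of a strongly primary monoid in hand, the Structure Theorem for Sets of Lengths (the primary analogue of Proposition \ref{3.2}(1), as developed in \cite{Ge-HK06a}) ensures that $\Delta(H)$ is finite and every $L\in\mathcal L(H)$ is an AAMP with some bounded parameters. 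The strengthening to period $\{0,\min\Delta(H)\}$ is then specific to the primary setting: since $H$ has a unique maximal ideal, all long ``jumps'' between factorization lengths align along a single arithmetic progression of step $\min\Delta(H)$, so the multiprogression period collapses. The hardest step is expected to be verifying local tameness under condition (b), which I anticipate handling via the same primary-monoid exchange techniques used for $v$-noetherian primary monoids in the literature.
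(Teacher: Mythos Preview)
Your Part 2 is correct and matches the paper verbatim: assume $\mathcal L(H)=\mathcal L(G_0)$, note $G_0$ cannot be half-factorial, and then Lemma \ref{5.4}.2 produces sets of lengths with $\rho\to 1$, contradicting Part 1.

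For Part 3 the paper simply cites \cite[3.1.1, 3.1.2, 4.3.6]{Ge-HK06a}; your sketch is a fair outline of what those references contain, so nothing more is expected of you here.

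Part 1 is where your plan is incomplete. You correctly isolate the key decomposition $a=u^n a'$ with $u$ a fixed atom and $u\nmid a'$, and you correctly note that strong primarity gives $\max\mathsf L(a')<\mathcal M(u)$. But then you drift: the phrases ``embed a power of the witness $c$ inside $a$ via $\mathfrak m^{\mathcal M(u)}\subset uH$'' and ``pigeonholing on the atoms appearing in a shortest factorization'' point in the wrong direction. Embedding $c^m\mid a$ does not by itself control $\min\mathsf L(a)$, and no pigeonhole on atoms is needed. The decisive observation you are missing is the \emph{other} direction of strong primarity: since $u\in\mathfrak m$, one has $u^{\mathcal M(c)}\in\mathfrak m^{\mathcal M(c)}\subset cH$, so $c\mid u^{\mathcal M(c)}$ and hence already $|\mathsf L(u^{\mathcal M(c)})|\ge 2$. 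Thus non-trivial elasticity lives in powers of the \emph{single atom} $u$. Now the whole argument is elementary fraction arithmetic: with $N=\mathcal M(c)$, write $n=tN+t_0$ with $t_0\in[0,N-1]$, use $t_0\in\mathsf L(u^{t_0})$ and $\max\mathsf L(a')<\mathcal M(u)$ to get
\[
\rho(\mathsf L(a))\ \ge\ \frac{t\,\max\mathsf L(u^{N})+t_0+\max\mathsf L(a')}{t\,\min\mathsf L(u^{N})+t_0+\max\mathsf L(a')}\ \ge\ \frac{\max\mathsf L(u^{N})+N+\mathcal M(u)}{\min\mathsf L(u^{N})+N+\mathcal M(u)}=: \beta_2>1
\]
for $t\ge 1$, while for $n<N$ one has $\min\mathsf L(a)\le N+\mathcal M(u)$ and hence $\rho(\mathsf L(a))\ge 1+\frac{1}{N+\mathcal M(u)}=:\beta_1$. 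Take $\beta=\min\{\beta_1,\beta_2\}$. This is exactly the paper's proof; once you add the observation $c\mid u^{\mathcal M(c)}$, your decomposition does all the work without any pigeonholing.
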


\noindent
{\it Remark.} If $H$ is the multiplicative monoid of a one-dimensional local Mori domain $R$ with nonzero conductor $(R \DP \widehat R) \ne \{0\}$, then one of the conditions in 3. is satisfied (see \cite[Proposition 2.10.7 and Theorem 3.1.5]{Ge-HK06a}). However, there are  strongly primary monoids for which none of the conditions holds and which are not locally tame (\cite[Proposition 3.7]{Ge-Ha-Le07}).

\begin{proof}
1.  Let $b\in H$ such that $|\mathsf L(b)|\ge 2$ and let $u\in \mathcal A(H)$. Since $H$ is a strongly primary monoid, we have $(H\setminus H^{\times})^{\mathcal M(b)}\in bH$ and $(H\setminus H^{\times})^{\mathcal M(u)}\in uH$. Thus $b\t u^{\mathcal M(b)}$ and hence $|\mathsf L(u^{\mathcal M(b)})|\ge 2$.  We define
\[
\beta_1=\frac{\mathcal M(b)+\mathcal M(u)+1}{\mathcal M(b)+\mathcal M(u)}\,, \quad  \quad \beta_2=\frac{\max\mathsf L(u^{\mathcal M(b)})+\mathcal M(b)+\mathcal M(u)}{\min\mathsf L(u^{\mathcal M(b)})+\mathcal M(b)+\mathcal M(u)}\,,
\]
and observe that  $\beta = \min \{\beta_1, \beta_2\} >1$. Let $a\in H$ with $\rho (\mathsf L(a)) \ne 1$. We show that $\rho (\mathsf L(a)) \ge \beta$.

Let $k\in \N_0$ be maximal such that $u^k \mid a$, say $a=u^k u'$ with $u'\in H$. Thus $u\nmid u'$ and thus $\max \mathsf L(u')<\mathcal M(u)$. If $k<\mathcal M(b)$, then $\min \mathsf L(a)\le \min \mathsf L(u^k)+\min \mathsf L(u')\le \mathcal M(b)+\mathcal M(u)$, and hence
\begin{align*}
\rho(\mathsf L(a))&=\frac{\max\mathsf L(a)}{\min \mathsf L(a)}\ge \frac{\min\mathsf L(a)+1}{\min \mathsf L(a)}\ge \frac{\mathcal M(b)+\mathcal M(u)+1}{\mathcal M(b)+\mathcal M(u)}=\beta_1\ge \beta\,.
\end{align*}
If $k\ge \mathcal M(b)$, then there exist $t\in \N$ and $t_0\in [0, \mathcal M(b)-1]$ such that $k=t\mathcal M(b)+t_0$, and hence
\begin{align*}
\rho(\mathsf L(a))&=\frac{\max\mathsf L(a)}{\min \mathsf L(a)}\ge \frac{\max\mathsf L(u^k)+\max\mathsf L(u')}{\min \mathsf L(u^k)+\min \mathsf L(u')}  \\
& \ge \frac{t\max\mathsf L(u^{\mathcal M(b)})+\max\mathsf L(u^{t_0})+\max\mathsf L(u')}{t\min \mathsf L(u^{\mathcal M(b)})+\min\mathsf L(u^{t_0})+\min \mathsf L(u')}\\
&\ge \frac{t\max\mathsf L(u^{\mathcal M(b)})+t_0+\max\mathsf L(u')}{t\min \mathsf L(u^{\mathcal M(b)})+t_0+\max \mathsf L(u')} \\
& \ge \frac{t\max\mathsf L(u^{\mathcal M(b)})+\mathcal M (b) + \mathcal M  (u)}{t\min \mathsf L(u^{\mathcal M(b)})+\mathcal M (b) + \mathcal M  (u)} \ge \beta_2\ge \beta\,.
\end{align*}

\smallskip
2. Assume to the contrary that there are an abelian group $G$ and a subset $G_0 \subset G$ such that $\mathcal L (H) = \mathcal L (G_0)$. Since $H$ is not half-factorial, $G_0$ is not half-factorial. By 1., there exists $\beta\in \Q$ with $\beta>1$ such that $\rho(L)\ge \beta$ for every $L \in \mathcal L (H)$.  Lemma \ref{5.4}.2 implies that there are zero-sum sequences $B_k \in \mathcal B (G_0)$ such that $\rho ( \mathsf L (B_k) ) \to 1$ as $k \to \infty$, a contradiction.

\smallskip
3. This follows from  \cite[3.1.1, 3.1.2, and 4.3.6]{Ge-HK06a}.
 \end{proof}

\medskip
Sets of lengths of numerical monoids have found wide attention in the literature (see, among others, \cite{B-C-K-R06, A-C-H-P07a, Co-Ka16a}). As can be seen from Theorem \ref{5.5}.3, the structure of their sets of lengths is simpler than the structure of sets of lengths of transfer Krull monoids over finite abelian groups. Thus it is no surprise that there are infinitely many non-isomorphic numerical monoids whose systems of sets of lengths coincide, and that an analog of Conjecture \ref{3.4} for numerical monoids does not hold true (\cite{A-C-H-P07a}). It is open whether for every $d \in \N$ and every $M \in \N_0$ there is a strongly primary monoid $D$ such that every AAMP with period $\{0,d\}$ and bound $M$ can (up to a shift) be realized as a set of lengths in $D$ (this would be the analog to the realization theorem given in Proposition \ref{3.2}.2). However, for every finite set $L \subset \mathbb N_{\ge 2}$ there is a $v$-noetherian primary monoid $D$ and an element $a \in D$ such that $L = \mathsf L (a)$ (\cite[Theorem 4.2]{Ge-Ha-Le07}).

By Theorem  \ref{3.6} and Proposition \ref{3.2}.3, we know that $\{k, k+1\} \in \mathcal L (G)$ for every $k \ge 2$ and every abelian group $G$ with $|G| \ge 3$. Furthermore, Theorem \ref{3.7} is in sharp contrast to Theorem \ref{5.6}.1.

\medskip
\begin{theorem} \label{5.6}
Let $D = D_1 \times \ldots \times D_n$ be the direct product of strongly primary monoids $D_1, \ldots, D_n$, which are not half-factorial.
\begin{enumerate}
\item For every finite nonempty set $L\subset \N$, there is a $y_L \in \N_0$   such that $y+L \notin \mathcal L (D)$ for any $y \ge y_L$.

\smallskip
\item We have $\mathcal L (D) \ne \mathcal L (G_0)$ for any subset $G_0$ of any abelian group, and hence $D$ is not a transfer Krull monoid. If $D_1, \ldots, D_n$ are locally tame, then $D$ satisfies the Structure Theorem for Sets of Lengths.
\end{enumerate}
\end{theorem}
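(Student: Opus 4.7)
The strategy for part (1) rests on two ingredients supplied by Theorem~\ref{5.5} and strong primariness. First, for each $i$ Theorem~\ref{5.5}.1 applied to $D_i$ yields a constant $\beta_i > 1$ such that every non-singleton $L_i \in \mathcal L(D_i)$ satisfies $\rho(L_i) \ge \beta_i$. Second, for each $i$ I would fix $b_i \in D_i$ with $|\mathsf L_{D_i}(b_i)| \ge 2$ and set $M_i := \mathcal M(b_i)$; if $a_i \in D_i$ has $|\mathsf L_{D_i}(a_i)| = 1$ with common length $\ell \ge M_i$, then in any factorization $a_i = v_1 \cdots v_\ell$ the product $v_1 \cdots v_{M_i}$ is divisible by $b_i$, so substituting a factorization of $b_i$ produces a length in $\mathsf L_{D_i}(a_i)$ different from $\ell$, a contradiction. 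Thus singleton length sets in $D_i$ force $\min \mathsf L_{D_i}(a_i) < M_i$.

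Given finite nonempty $L \subset \N$, set $w := \max L - \min L$, $\beta := \min_i \beta_i > 1$, and $M := \max_i M_i$. Suppose $a = (a_1, \dots, a_n) \in D$ with $\mathsf L(a) = y + L$. Since $\mathsf L(a) = \sum_{i=1}^n \mathsf L_{D_i}(a_i)$ as a sumset, one has $w = \sum_i (\max \mathsf L_{D_i}(a_i) - \min \mathsf L_{D_i}(a_i))$ and $y + \min L = \sum_i \min \mathsf L_{D_i}(a_i)$. For each $i$ with $|\mathsf L_{D_i}(a_i)| \ge 2$ the inequality $\max \mathsf L_{D_i}(a_i) \ge \beta_i \min \mathsf L_{D_i}(a_i)$ gives $\min \mathsf L_{D_i}(a_i) \le w/(\beta - 1)$; for the remaining indices $\min \mathsf L_{D_i}(a_i) < M$. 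Summing yields $y + \min L \le n\bigl(w/(\beta-1) + M\bigr)$, and any $y_L$ exceeding this bound (minus $\min L$) works.

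For part (2), assume toward a contradiction that $\mathcal L(D) = \mathcal L(G_0)$ for some subset $G_0$ of an abelian group. Since each $D_i$ is not half-factorial, neither is $D$, and hence neither is $G_0$. I would apply Lemma~\ref{5.4}.1 to extract a half-factorial subset $G_1 \subsetneq G_0$ with an atom $A \in \mathcal A(G_1)$, fix $B \in \mathcal B(G_0)$ with $|\mathsf L(B)| > 1$, and consider $B_k := A^k B$. As in the proof of Lemma~\ref{5.4}.2, for all sufficiently large $k$ one has $\mathsf L(B_k) = (k - k_0) + L^*$ with $L^* := \mathsf L(B_{k_0})$ a fixed non-singleton set. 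Hence arbitrarily large shifts of $L^*$ lie in $\mathcal L(G_0) = \mathcal L(D)$, contradicting part (1) applied to $L = L^*$. Since every transfer Krull monoid has system of sets of lengths equal to $\mathcal L(G_0)$ for some $G_0$, this rules out $D$ being transfer Krull. The Structure Theorem conclusion follows by applying Proposition~\ref{5.3}.1 with $H := D$ (the inclusion $D \hookrightarrow D$ is trivially faithfully saturated and $\mathsf q(D)/\mathsf q(D)$ is trivial).

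The main obstacle is the bookkeeping in part (1): one must carefully separate the (at most $w$ many) nontrivial components, where Theorem~\ref{5.5}.1 forces a geometric bound, from the remaining trivial components, where strong primariness forces an absolute bound; the interaction between these two regimes must be controlled so that the final bound depends only on $L$ and the intrinsic constants $\beta, M$ of $D$. Once this is in place, part (2) is a short deduction combining part (1) with the shift construction behind Lemma~\ref{5.4}.
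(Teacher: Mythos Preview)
Your proof is correct, but the route you take for part (1) differs from the paper's. The paper proceeds by a single direct pigeonhole argument: fixing $a_i\in D_i$ with $|\mathsf L(a_i)|\ge 2$ and setting $y_L=|L|\sum_i\mathcal M(a_i)$, if $\mathsf L(b)=y+L$ with $y\ge y_L$ then some component satisfies $\min\mathsf L_{D_i}(b_i)\ge |L|\,\mathcal M(a_i)$, whence $a_i^{|L|}\mid b_i$ and therefore $|\mathsf L_{D_i}(b_i)|\ge |L|+1$, contradicting $|\mathsf L(b)|=|L|$. You instead split the components into those with non-singleton length set (bounded via the elasticity constant $\beta$ from Theorem~\ref{5.5}.1) and those with singleton length set (bounded via $\mathcal M(b_i)$, which is exactly the paper's divisibility idea in the case $|L|=1$). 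The paper's argument is more self-contained---it does not invoke Theorem~\ref{5.5} at all---and yields a cleaner explicit bound, while your approach has the conceptual advantage of packaging the non-singleton case into the already-proved Theorem~\ref{5.5}. For part (2) your argument coincides with the paper's: Lemma~\ref{5.4}.2 provides arbitrarily large shifts of a fixed non-singleton set in $\mathcal L(G_0)$, which part (1) excludes from $\mathcal L(D)$, and Proposition~\ref{5.3}.1 handles the Structure Theorem.

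One small remark: your displayed bound $n\bigl(w/(\beta-1)+M\bigr)$ is correct but loose; since the widths $w_i=\max\mathsf L_{D_i}(a_i)-\min\mathsf L_{D_i}(a_i)$ of the non-singleton components sum to $w$, the contribution of those components is in fact at most $w/(\beta-1)$ total, giving the sharper $y+\min L\le w/(\beta-1)+nM$. This does not affect correctness.
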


\begin{proof}
For every $i \in [1,n]$ we choose an element $a_i \in D_i$ such that $|\mathsf L (a_i)| > 1$.

\smallskip
1. Let $L\subset \N$ be a finite nonempty set and
 let $y_L = |L|(\mathcal M (a_1)+ \ldots + \mathcal M (a_n))$.  Assume to the contrary that there are $y \ge y_L$ and    an element $b=b_1 \cdot \ldots \cdot b_n \in D$ such that $\mathsf L (b) = y+L$. Then there is an $i \in [1,n]$ such that $\min \mathsf L (b_i) \ge |L| \mathcal M (a_i)$. Then $b_i \in (D_i \setminus D_i^{\times})^{\min \mathsf L (b_i)} \subset (D_i \setminus D_i^{\times})^{|L| \mathcal M (a_i)} \subset a_i^{|L|} D_i$. Thus there is a $c_i \in D_i$ such that $a_i^{|L|}c_i = b_i$. This implies that $|L|\mathsf L (a_i) + \mathsf L (c_i) \subset \mathsf L (b_i)$. Since $|\mathsf L (a_i)| \ge 2$, we infer that $|\mathsf L (b_i)| \ge |L|+1$ and hence $|L|=|y+L|=|\mathsf L (b)| \ge |\mathsf L (b_i)| \ge |L|+1$, a contradiction.

\smallskip
2. By 1. and Lemma \ref{5.4}.2, the first conclusion follows.

If $D_1, \ldots, D_n$ are locally tame, then $D$ satisfies the Structure Theorem  by Proposition \ref{5.3}.1.
 \end{proof}

\medskip
\begin{theorem} \label{5.7}
Let $D = \mathcal F (\mathcal P) \times D_1$ be the direct product of a free abelian monoid with nonempty basis $\mathcal P$ and of a locally tame strongly primary monoid $D_1$,  and let $G$ be an abelian group. Then $D$ satisfies the Structure Theorem for Sets of Lengths, and the following statements are equivalent{\rm \,:}
\begin{enumerate}
\item[(a)] $\mathcal L (D)= \mathcal L (G)$.

\smallskip
\item[(b)] One of the following  cases holds{\rm \,:}
           \begin{description}
           \smallskip
           \item[(b1)] $|G|\le 2$ and $\rho (D)=1$.

           \smallskip
           \item[(b2)] $G$ is isomorphic either to $C_3$ or to $C_2\oplus C_2$, $[2,3] \in \mathcal L (D)$, $\rho (D)=3/2$,  and $\Delta (D)=\{1\}$.

           \smallskip
           \item[(b3)] $G$ is isomorphic to $C_3 \oplus C_3$, $[2,5] \in \mathcal L (D)$, $\rho (D)=5/2$, and $\Delta (D)=\{1\}$.
           \end{description}
\end{enumerate}
\end{theorem}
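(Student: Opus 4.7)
The backbone of my plan is the shift decomposition $\mathcal L(D)=\{y+L \mid y\in\N_0,\,L\in\mathcal L(D_1)\}$ together with the equalities $\rho(D)=\rho(D_1)$ and $\Delta(D)=\Delta(D_1)$. These hold because $\mathcal P\ne\emptyset$ and the atoms of $D$ are either primes in $\mathcal P$ or atoms of $D_1$, so each $a=a_0a_1\in \mathcal F(\mathcal P)\times D_1$ satisfies $\mathsf L(a)=\{|a_0|\}+\mathsf L_{D_1}(a_1)$ with $|a_0|$ ranging over $\N_0$. Applying Theorem~\ref{5.5}.3 to $D_1$ gives that every $L\in\mathcal L(D_1)$ is an AAMP with period $\{0,\min\Delta(D_1)\}$ and a uniform bound $M$; since a shift of an AAMP of this form is again an AAMP of the same form, $D$ satisfies the Structure Theorem.

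For $(a)\Rightarrow(b)$, I would split on the size of $G$. If $|G|\le 2$, Proposition~\ref{3.3}.1 gives $\mathcal L(G)=\{\{y\}\mid y\in\N_0\}$, which forces $D$ to be half-factorial and yields (b1). If $|G|\ge 3$, Theorem~\ref{3.6} yields $[2,3]\in\mathcal L(G)=\mathcal L(D)$, and the shift decomposition then forces $[2,3]\in\mathcal L(D_1)$ (a positive shift would place a unit length or an atom length nontrivially inside a longer $L$). Hence $\min\Delta(D_1)=1$, so by Theorem~\ref{5.5}.3 every $L\in\mathcal L(D_1)$ is an interval, and by the shift decomposition so is every set in $\mathcal L(D)=\mathcal L(G)$; in particular $\Delta(G)=\{1\}$. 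Combining $\Delta^*(G)\subset\Delta(G)$ with $\max\Delta^*(G)=\max\{\exp(G)-2,\mathsf r(G)-1\}$ (both used in the proof of Theorem~\ref{3.5}) singles out $G\in\{C_3,\,C_2\oplus C_2,\,C_3\oplus C_3\}$; the identities $\rho(G)=\mathsf D(G)/2$ and $[2,\mathsf D(G)]\in\mathcal L(G)$ then recover cases (b2) and (b3).

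For $(b)\Rightarrow(a)$, case (b1) is immediate. In (b2) and (b3), $\Delta(D)=\{1\}$ together with the elasticity bound $\rho(D)=\mathsf D(G)/2$ forces every non-singleton $L\in\mathcal L(D)$ to be an interval $[a,b]$ with $b\le\mathsf D(G)a/2$; matching this against Proposition~\ref{3.3}.2 and Theorem~\ref{4.1} gives $\mathcal L(D)\subset\mathcal L(G)$. For the reverse inclusion I would choose $a\in D_1$ with $\mathsf L(a)=[2,\mathsf D(G)]$ (the shift argument again forces any representative to lie in $D_1$), note $\mathsf L(a^k)\supset k\cdot[2,\mathsf D(G)]=[2k,k\mathsf D(G)]$, and invoke the elasticity ceiling to upgrade this to $\mathsf L(a^k)=[2k,k\mathsf D(G)]$; shifts via $\mathcal F(\mathcal P)$ then realize the entire family $\{y+[2k,k\mathsf D(G)]\mid y,k\in\N_0\}$. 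For $G\in\{C_3,\,C_2\oplus C_2\}$ this is already all of $\mathcal L(G)$, completing (b2).

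The main obstacle is the remaining step of (b3), since Theorem~\ref{4.1} shows that $\mathcal L(C_3\oplus C_3)$ also contains the family $\{y+[2k+1,5k+2]\mid y\in\N_0,\,k\ge 1\}$. A shift analysis forces $[2k+1,5k+2]\in\mathcal L(D_1)$ for every $k\ge 1$, and for an atom $u\in\mathcal A(D_1)$ the product $a^ku$ satisfies $[2k+1,5k+1]\subset\mathsf L(a^ku)\subset[2k+1,5k+2]$ by elasticity; the hard step is producing a factorization of $a^ku$ of length $5k+2$. My plan is to exploit strong primariness of $D_1$: for $n=\mathcal M(a)$ one has $u^n\in aD_1$, so $u^n=ac$ for some $c\in D_1$, and this relation should let me refine the length-$(5k+1)$ factorization of $a^ku$ by breaking a single atomic block into two pieces via the interplay of the length-$2$ and length-$5$ factorizations of $a$. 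Combined with an induction on $k$ starting from $[3,7]\in\mathcal L(D_1)$, this would realize the second family and close $(b3)\Rightarrow(a)$.
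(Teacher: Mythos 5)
Your handling of the Structure Theorem and of case (b2) agrees with the paper, but there is a genuine gap in your $(a)\Rightarrow(b)$ argument. From $[2,3]\in\mathcal L(D_1)$ you correctly get $\min\Delta(D_1)=1$, but you then claim that Theorem \ref{5.5}.3 makes every $L\in\mathcal L(D_1)$ an interval. That theorem only says that every $L$ is an AAMP with period $\{0,1\}$ and some bound $M$, and such a set need not be an interval: for instance $\{2,4,5\}=4+(\{-2\}\cup[0,1])$ is an AAMP with difference $1$ and bound $2$. Equivalently, $1\in\Delta(D)$ does not give $\Delta(D)=\{1\}$ -- indeed $\Delta(D)=\{1\}$ appears as a separate condition in (b2) and (b3) precisely because it is not automatic. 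Without this step you have no control on $\Delta(G)$, hence no bound on $\exp(G)$ and $\mathsf r(G)$, and no argument that $G$ is finite. The paper closes exactly this hole differently: finiteness of $G$ follows by comparing the Structure Theorem for $D$ with Proposition \ref{3.2}.3, and one then argues with $\Delta_1$ instead of $\Delta$: Theorem \ref{5.5}.3 yields $\Delta_1(D)=\{\min\Delta(D)\}$, while $1\in\Delta_1(G)=\Delta_1(D)$ and $\max\Delta_1(G)=\max\Delta^*(G)=\max\{\exp(G)-2,\mathsf r(G)-1\}=1$, which singles out $C_3$, $C_2\oplus C_2$, $C_3\oplus C_3$; the statements $\Delta(D)=\{1\}$, $\rho(D)$ and $[2,\mathsf D(G)]\in\mathcal L(D)$ are then read off from the explicit descriptions of $\mathcal L(G)$ for these groups.

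The second gap is in $(b3)\Rightarrow(a)$, which you leave as a plan rather than a proof. In this direction membership of $[2k+1,5k+2]$ in $\mathcal L(D)$ must be constructed, so the phrase ``a shift analysis forces $[2k+1,5k+2]\in\mathcal L(D_1)$'' is backwards; and the crux -- producing an element with maximal factorization length $5k+2$ -- is exactly what your sketch does not establish: for an arbitrary atom $u$ one only gets $[2k+1,5k+1]\subset\mathsf L(a^ku)\subset[2k+1,5k+2]$, and the proposed use of $u^{\mathcal M(a)}\in aD_1$ is a heuristic, not an argument (your induction base $[3,7]\in\mathcal L(D_1)$ is itself the case $k=1$ of the claim). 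Moreover, the target family is too small: your containment step $\mathcal L(D)\subset\mathcal L(G)$ relies on the description of $\mathcal L(C_3\oplus C_3)$ as the set of \emph{all} intervals $[m,\ell]$ with the elasticity/parity bound (the Remark after Theorem \ref{4.1} and the description used in the paper's proof), so sets such as $[2,4]$ or $[4,8]$ must also be realized in $\mathcal L(D)$, whereas powers of $a$ together with prime shifts only produce intervals whose length is divisible by $3$, and your plan only addresses the additional family $y+[2k+1,5k+2]$. (The paper's own treatment of (b3) is terse -- ``along the same lines as (b2)'' -- but your proposal leaves these essential realizations unestablished.)
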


\noindent
{\it Remark.} Let $H$ be a $v$-noetherian weakly Krull monoid. If the conductor $(H \DP \widehat H) \in v$-$\max (H)$, then by Proposition \ref{5.1}, $\mathcal I_v^* (H)$ is isomorphic to a monoid $D$ as given in Theorem \ref{5.7}.

\begin{proof}
Since $\mathcal P$ is nonempty, $\mathcal L (D) = \{y+L \mid y \in \N_0, L \in \mathcal L (D_1) \}$ whence $\Delta (D)=\Delta (D_1)$ and $\rho (D)=\rho (D_1)$. In particular, $D$ is half-factorial if and only if $D_1$ is half-factorial. Since $D_1$ satisfies the Structure Theorem of Sets of Lengths by Theorem \ref{5.5}.3,  the same is true for $D$.

If $D$ is half-factorial and  $\mathcal L (D)=\mathcal L (G)$, then $\rho (D)=\rho (D_1)=1$ and $G$ is half-factorial whence $|G| \le 2$ by Proposition \ref{3.3}. Conversely, if $|G|\le 2$ and $\rho (D)=1$, then $G$ and $D$ are half-factorial and $\mathcal L (G)=\mathcal L (D)$.

Thus from now on we suppose that $D_1$ is not half-factorial and that (b1) does not hold. Then $\Delta (D) \ne \emptyset$ and we set $\min \Delta (D)=d$.

\medskip
(a) $\Rightarrow$ (b) Theorem \ref{5.5}.3 and Proposition \ref{3.2}.3 imply that $G$ is finite. Since $G$ is not half-factorial,  it follows that $|G| \ge 3$. Theorem \ref{5.5}.3 shows that  $\Delta_1 (D) = \{d\}$, and since $1 \in \Delta_1 (G) = \Delta_1 (D)$, we infer that $d=1$. Corollary 4.3.16 in \cite{Ge-HK06a} and \cite[Theorem 1.1]{Ge-Zh16a} imply that
\[
\max \{\exp (G)-2, \mathsf r (G)-1\} = \max \Delta_1 (G) = \max \Delta_1 (D) = 1 \,.
\]
Therefore $G$ is isomorphic to one of the following groups: $C_2\oplus C_2$, $C_3$, $C_3 \oplus C_3$. We distinguish two cases.

\smallskip
\noindent
CASE 1: \, $G$ is isomorphic to $C_2 \oplus C_2$ or to $C_3$.

By Proposition \ref{3.3}, we have
\[
\mathcal L (D) = \mathcal L (C_2 \oplus C_2) = \mathcal L (C_3) = \{ y + 2k+[0,k] \mid y,k \in \N_0\} \,.
\]
In particular, we have $3/2 = \rho (G)= \rho (D)$ and $\{1\}=\Delta (G)=\Delta (D)$.

\smallskip
\noindent
CASE 2: \, $G$ is isomorphic to $C_3 \oplus C_3$.

By Theorem \ref{4.1}, just using different notation, we have
\[
\begin{aligned}
\mathcal L (D) = \mathcal L (C_3^2) & = \{ [2k, \ell] \mid k \in \mathbb N_0, \ell \in [2k, 5k]\} \\
 &  \quad \cup \ \{ [2k+1, \ell] \mid k \in \N, \ell \in [2k+1, 5k+2] \} \cup \{ \{ 1\}  \} \,.
\end{aligned}
\]
In particular, we have $5/2=\rho (G)=\rho (D)$ and  $\{1\}=\Delta (G)=\Delta (D)$.

\medskip
(b) $\Rightarrow$ (a) First suppose that Case (b2) holds.  We  show that
\[
\mathcal L (D) =  \bigl\{ y + 2k + [0, k] \, \bigm| \, y,\, k \in \N_0 \bigr\} \,.
\]
Then $\mathcal L (D)=\mathcal L (G)$ by Proposition \ref{3.3}. Since $\rho (D)=3/2$ and $\Delta (D)=\{1\}$, it follows that $\mathcal L (D)$ is contained in the above family of sets. Thus we have to verify that for every $y, k \in \N_0$, the set $y+ [2k,3k] \in \mathcal L (D)$. Since $\mathcal P$ is nonempty, $D$ contains a prime element and hence it suffices to show that $[2k,3k] \in \mathcal L (D)$ for all $k \in \N$. Let $a \in D$ with $\mathsf L (a) = \{2,3\}$, and let $k \in \N$. Then $\min \mathsf L (a^k) \le 2k$ and $\max \mathsf L (a^k) \ge 3k$. Since $\rho ( \mathsf L (a^k)) \le \rho (D) =3/2$, it follows that  $\min \mathsf L (a^k)=2k$ and $\max \mathsf L (a^k)=3k$. Since $\Delta (D)=\{1\}$, we finally obtain that $\mathsf L (a^k)=[2k,3k]$.

Now suppose that Case (b3) holds.  We  show that $\mathcal L (D) $ is equal to
\[
 \{ [2k, \ell] \mid k \in \mathbb N_0, \ell \in [2k, 5k]\} \  \cup \ \{ [2k+1, \ell] \mid k \in \N, \ell \in [2k+1, 5k+2] \} \cup \{ \{ 1\}  \}  \,.
\]
Then $\mathcal L (D)=\mathcal L (G)$ by Theorem  \ref{4.1}. Since $\rho (D)=5/2$ and $\Delta (D)=\{1\}$, it follows that $\mathcal L (D)$ is contained in the above family of sets. Now the proof runs along the same lines as the proof in Case (b2).
 \end{proof}

\medskip
We show that the Cases (b2) and (b3) in Theorem \ref{5.7} can actually occur. Recall that numerical monoids are locally tame and strongly primary.
Let $D_1$ be a numerical monoid distinct from $(\N_0,+)$, say $\mathcal A (D_1)=\{n_1, \ldots, n_t\}$ where $t \in \N_{\ge 2}$ and $1 < n_1 < \ldots < n_t$. Then, by \cite[Theorem 2.1]{Ch-Ho-Mo06} and \cite[Proposition 2.9]{B-C-K-R06},
\[
\rho (D_1) = \frac{n_t}{n_1} \quad \text{and} \quad \min \Delta (D_1) = \gcd(n_2-n_1, \ldots, n_t - n_{t-1}) \,.
\]
Suppose that $\rho (D_1) = m/2$ with $m \in \{3,5\}$ and $\Delta (D_1) =\{1\}$. Then there is an $a \in D_1$ with $\mathsf L (a) = [2,m] \in \mathcal L (D_1)$. Clearly, there are non-isomorphic numerical monoids with elasticity $m/2$ and set of distances equal to $\{1\}$.

\medskip
\begin{theorem} \label{5.8}
Let $R$ be a $v$-noetherian weakly Krull domain with  conductor
 $\{0\} \subsetneq \mathfrak f = (R \DP \widehat R) \subsetneq R$,  and let $\pi \colon  \mathfrak X ( \widehat R) \to \mathfrak X (R)$ be the natural map defined by $\pi ( \mathfrak P) = \mathfrak P \cap R$ for all $\mathfrak P \in \mathfrak X (\widehat R)$.

\begin{enumerate}
\item \begin{enumerate}
      \item $\mathcal I_v^* (H)$ is locally tame with finite set of distances, and it satisfies the Structure Theorem for Sets of Lengths.

      \smallskip
      \item If $\pi$ is not bijective, then $\mathcal L \big(  \mathcal I_v^* (H)   \big) \ne \mathcal L (G_0)$ for any finite subset $G_0$ of any abelian group  and for any subset $G_0$ of an infinite cyclic group. In particular, $\mathcal I_v^* (H)$ is not a transfer Krull monoid of finite type.

      \smallskip
      \item If $R$ is seminormal, then the following statements are equivalent{\rm \,:}
            \begin{enumerate}
            \item $\pi$ is bijective.
            \item $\mathcal I_v^* (H)$ is a transfer Krull monoid of finite type.
            \item $\mathcal I_v^* (H)$ is half-factorial.
            \end{enumerate}
      \end{enumerate}

\smallskip
\item Suppose that the class group $\mathcal C_v (R)$ is finite.
      \begin{enumerate}
      \smallskip
      \item The monoid $R^{\bullet}$ of nonzero elements of $R$ is locally tame with finite set of distances, and it satisfies the Structure Theorem for Sets of Lengths.

      \smallskip
      \item  If $\pi$ is not bijective, then $\mathcal L (R^{\bullet}) \ne \mathcal L (G_0)$ for any finite subset $G_0$ of any abelian group  and for any subset $G_0$ of an infinite cyclic group. In particular, $R$ is not a transfer Krull domain of finite type.

      \smallskip
       \item If $\pi$ is bijective, $R$ is seminormal, every class of $\mathcal C_v (R)$ contains a $\mathfrak p \in \mathfrak X (R)$ with $\mathfrak p \not\supset \mathfrak f$, and the natural epimorphism $\delta \colon \mathcal C_v (R) \to \mathcal C_v(\widehat R)$ is an isomorphism, then there is a weak transfer homomorphism $\theta \colon R^{\bullet} \to \mathcal B ( \mathcal C_v(R))$. In particular, $R$ is a transfer Krull domain of finite type.
      \end{enumerate}
\end{enumerate}
\end{theorem}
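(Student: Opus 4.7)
The proof proceeds in four stages mirroring the statement's structure, leaning heavily on Proposition \ref{5.1} to decompose $\mathcal I_v^*(R)$ and on Proposition \ref{5.2} to pass between $R^\bullet$ and its ``block monoid''. By Proposition \ref{5.1} the conductor hypothesis gives $\mathcal I_v^*(R) \cong \mathcal F(\mathcal P) \times \prod_{\mathfrak p \in \mathcal P^*}(R_{\mathfrak p})_{\red}$ with $\mathcal P^*$ finite and $\mathcal H(R) \hookrightarrow \mathcal I_v^*(R)$ faithfully saturated with $\mathsf q(\mathcal I_v^*(R))/\mathsf q(\mathcal H(R)) \cong \mathcal C_v(R)$. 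Each factor $(R_{\mathfrak p})_{\red}$ is the reduced monoid of a one-dimensional local Mori domain with nonzero conductor, hence strongly primary and locally tame by the remark following Theorem \ref{5.5}. Finite products of such monoids with a free factor are locally tame with finite set of distances, so Proposition \ref{5.3} (applied to $\mathcal I_v^*(R)$ trivially, and to $R^\bullet \cong \mathcal H(R)_{\red}$ via the embedding) yields the Structure Theorem assertions 1(a) and 2(a).

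For 1(b) and 2(b), the point is that non-bijectivity of $\pi$ forces at least one $(R_{\mathfrak p})_{\red}$ to be non-half-factorial: two distinct primes of $\widehat R$ lying over a single $\mathfrak p \in \mathcal P^*$ produce independent $\widehat R$-valuations, and one can explicitly construct an element of $R_{\mathfrak p}$ with two factorizations of different lengths. Hence at least one factor $D_i$ in the decomposition is not half-factorial. If $\mathcal P$ is empty we apply Theorem \ref{5.6} (after absorbing the half-factorial $D_j$'s into a single half-factorial summand, whose sets of lengths are all singletons); otherwise Theorem \ref{5.7} applies in the one-non-half-factorial-factor case, while Theorem \ref{5.6} handles the remaining cases. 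In each case the conclusions (the ``forbidden'' shifted sets from Theorem \ref{5.6}.1, or the elasticity bound from Theorem \ref{5.5}.1) contradict the description of $\mathcal L(G_0)$ in Propositions \ref{3.2} and \ref{3.3} for finite $G_0$, and contradict $\{k,k+1\} \in \mathcal L(G_0)$ via Theorem \ref{3.6} for subsets of infinite cyclic groups. The transfer from $\mathcal I_v^*(R)$ to $R^\bullet$ goes through the weak transfer homomorphism of Proposition \ref{5.2} which preserves $\mathcal L$.

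For 1(c), the seminormality hypothesis is crucial because it is known (cf.\ the literature on seminormal weakly Krull domains cited via \cite{Ge-Ka-Re15a}) that a seminormal one-dimensional local Mori domain $R_{\mathfrak p}$ is half-factorial if and only if $\widehat{R_{\mathfrak p}}$ has a unique maximal ideal, i.e.\ $|\pi^{-1}(\mathfrak p)| = 1$. Combined with surjectivity of $\pi$ (automatic since every prime of $R$ containing $\mathfrak f$ is the contraction of some prime of $\widehat R$), this gives (i) $\Leftrightarrow$ (iii), where (iii) is equivalent to $\mathcal I_v^*(R)$ being half-factorial (the free factor and each primary factor being half-factorial). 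The implication (iii) $\Rightarrow$ (ii) is immediate since half-factorial monoids are transfer Krull (over the trivial group), and (ii) $\Rightarrow$ (i) is the contrapositive of 1(b).

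For 2(c), I would construct the desired weak transfer homomorphism $\theta \colon R^\bullet \to \mathcal B(\mathcal C_v(R))$ as a composition: first the transfer $R^\bullet \to B$ of Proposition \ref{5.2} into a saturated submonoid $B$ of $F = \mathcal F(G_{\mathcal P}) \times \prod_i D_i$; then, under the present hypotheses, identify $B$ with the block monoid $\mathcal B(\mathcal C_v(R))$ via the isomorphism $\delta$. The bijectivity of $\pi$ together with seminormality collapses each primary factor $D_i = (R_{\mathfrak p_i})_{\red}$ to a discrete valuation monoid (by the criterion invoked in stage 3), so $F$ becomes a free abelian monoid and $B \subset F$ is a saturated submonoid indexed by a finite abelian group of classes; the hypothesis that every class contains some $\mathfrak p \notin \mathcal P^*$ ensures that this class group is realized as $\mathcal C_v(R)$ rather than some proper quotient, and the isomorphism $\delta$ identifies this with $\mathcal B(\mathcal C_v(R))$. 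I expect the main obstacle to be the verification of condition (WT2) for the composite map: one must show that every atom-factorization on the block-monoid side lifts to a factorization in $R^\bullet$ into atoms of matching image-classes, and this is precisely where the combination of seminormality (which kills the inner arithmetic of $(R_{\mathfrak p})_{\red}$) and the class-representative condition (which allows rechoosing primes within each class) must be combined carefully.
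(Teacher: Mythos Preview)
Your treatment of 1(a), 2(a), 1(c) aligns with the paper, and for 2(c) the paper simply cites \cite[Theorem 5.8]{Ge-Ka-Re15a} rather than reconstructing the transfer homomorphism (your sketch is plausible, though the claim that the primary factors become discrete valuation monoids is stronger than what is needed or proved---seminormal rank-one finitely primary monoids are half-factorial, not necessarily DVMs).

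The real gap is in 1(b) and 2(b). The paper's argument is short and uses a fact you do not invoke: if $\pi$ is not bijective then $\rho\big(\mathcal I_v^*(R)\big)=\rho(R^{\bullet})=\infty$ (\cite[Theorems 3.1.5 and 3.7.1]{Ge-HK06a}). From this the conclusion is immediate: for finite $G_0$ the monoid $\mathcal B(G_0)$ is finitely generated and hence $\rho(G_0)<\infty$; for $G_0$ a subset of an infinite cyclic group one has $\Delta(G_0)$ finite if and only if $\rho(G_0)$ finite (\cite[Theorem 4.2]{Ge-Gr-Sc-Sc10}), but $\mathcal I_v^*(R)$ has finite $\Delta$ by 1(a) and infinite $\rho$.

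Your alternative route through Theorems \ref{5.5}--\ref{5.7} does not close. First, Theorem \ref{5.7} does \emph{not} assert $\mathcal L(D)\neq\mathcal L(G_0)$ for all finite $G_0$; on the contrary, it lists cases (b2), (b3) in which equality holds. To exclude those cases you would need $\rho(D)\notin\{3/2,5/2\}$, which brings you right back to the elasticity computation you avoided. Second, Theorem \ref{5.6} is stated for a product of strongly primary monoids \emph{all} of which are non-half-factorial; your ``absorbing'' of half-factorial factors is not innocuous, since a half-factorial strongly primary factor makes the system of sets of lengths shift-closed (every $y+L$ with $L\in\mathcal L$ lies in $\mathcal L$), and then the conclusion of Theorem \ref{5.6}.1---that large shifts of a fixed $L$ eventually leave $\mathcal L$---is simply false for the product. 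Third, your handling of the infinite-cyclic case via Theorem \ref{3.6} is misplaced: that theorem concerns full finite groups $G$ with $|G|\ge 3$, not arbitrary subsets of $\Z$. In short, the single input $\rho=\infty$ is both necessary and sufficient here, and your detour does not substitute for it.
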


\begin{proof}
Since $\mathfrak f \ne R$, it follows that $R \ne \widehat R$ and that $R$ is not a Krull domain. We use the structural description of $\mathcal I_v^* (H)$ as given in Proposition \ref{5.1}.

\smallskip
1.(a) and 2.(a) Both monoids, $R^{\bullet}$ and $\mathcal I_v^* (H)$, are locally tame with finite set of distances by  \cite[Theorem 3.7.1]{Ge-HK06a}. Furthermore,   they both satisfy the Structure Theorem for Sets of Lengths by Proposition \ref{5.3} (use Propositions \ref{5.1} and \ref{5.2}).

\smallskip
1.(b) and 2.(b)  Suppose that  $\pi$ is not bijective. Then $\rho \big( \mathcal I_v^* (H) \big) = \rho (R^{\bullet}) = \infty$ by \cite[Theorems 3.1.5 and  3.7.1]{Ge-HK06a}.
Let $G_0$ be a finite subset of an abelian group $G$. Then $\mathcal B (G_0)$ is finitely generated, the Davenport constant $\mathsf D (G_0)$ is finite  whence the set of distances $\Delta (G_0)$ and the elasticity $\rho (G_0)$ are both finite (see \cite[Theorems 3.4.2 and 3.4.11]{Ge-HK06a}). Thus $\mathcal L \big(  \mathcal I_v^* (H)   \big) \ne \mathcal L (G_0)$ and $\mathcal L (R^{\bullet}) \ne \mathcal L (G_0)$. If $G_0$ is a subset of an infinite cyclic group, then the set of distances is finite if and only if the elasticity is finite by \cite[Theorem 4.2]{Ge-Gr-Sc-Sc10}, and hence the assertion follows again.

\smallskip
1.(c) Suppose that $R$ is seminormal. By 1.(b) and since half-factorial monoids are transfer Krull monoids of finite type, it remains to show that $\pi$ is bijective if and only if $\mathcal I_v^* (H)$ is half-factorial. Since $R$ is seminormal, all localizations $R_{\mathfrak p}$ with $\mathfrak p \in \mathfrak X (H)$ are seminormal. Thus $\mathcal I_v^* (H)$ is isomorphic to a monoid of the form $\mathcal F ( \mathcal P) \time D_1 \time \ldots \time D_n$, where $n \in \N$ and $D_1, \ldots, D_n$ are seminormal finitely primary monoids, and this monoid is half-factorial if and only if each monoid $D_1, \ldots, D_n$ is half-factorial. By \cite[Lemma 3.6]{Ge-Ka-Re15a}, $D_i$ is half-factorial if and only if it has rank one for each $i \in [1,n]$, and this is equivalent to $\pi$ being bijective (see \cite[Theorem 3.7.1]{Ge-HK06a}).

\smallskip
2.(c) This follows from \cite[Theorem 5.8]{Ge-Ka-Re15a}.
 \end{proof}

\smallskip
Note that every order $R$  in an algebraic number field  is a $v$-noetherian weakly Krull domain with finite class group $\mathcal C_v (R)$ such that every class   contains a $\mathfrak p \in \mathfrak X (R)$ with $\mathfrak p \not\supset \mathfrak f$.
If $R$ is a $v$-noetherian weakly Krull domain as above, then Theorems \ref{5.5}, \ref{5.6}, and \ref{5.7} provide further instances of when $R$ is not a transfer Krull domain, but a characterization of the general case remains open. We formulate the following problem (see also \cite[Problem 4.7]{Ge16c}).

\begin{problem} \label{5.9}
Let $H$ be a $v$-noetherian weakly Krull monoid with nonempty conductor $(H \DP \widehat H)$ and finite class group $\mathcal C_v (H)$. Characterize when $H$ and when the monoid $\mathcal I_v^* (H)$ are transfer Krull monoids resp. transfer Krull monoids of finite type.
\end{problem}

\providecommand{\bysame}{\leavevmode\hbox to3em{\hrulefill}\thinspace}
\providecommand{\MR}{\relax\ifhmode\unskip\space\fi MR }
\providecommand{\MRhref}[2]{%
  \href{http://www.ams.org/mathscinet-getitem?mr=#1}{#2}
}
\providecommand{\href}[2]{#2}


\begin{thebibliography}{10}

\bibitem{A-C-H-P07a}
J.~Amos, S.T. Chapman, N.~Hine, and J.~Paix{\~a}o, \emph{Sets of lengths do not
  characterize numerical monoids}, Integers \textbf{7} (2007), Paper A50, 8p.

\bibitem{An-An-Za92b}
D.D. Anderson, D.F. Anderson, and M.~Zafrullah, \emph{Atomic domains in which
  almost all atoms are prime}, Commun. Algebra \textbf{20} (1992), 1447 --
  1462.

\bibitem{An-Mo-Za92}
D.D. Anderson, J.~Mott, and M.~Zafrullah, \emph{Finite character
  representations for integral domains}, Boll. Unione Mat. Ital. \textbf{6}
  (1992), 613 -- 630.

\bibitem{Ba-Ba-Mc15a}
D.~Bachman, N.~Baeth, and A.~McQueen, \emph{Factorizations of upper triangular
  {T}oeplitz matrices}, Boll. Unione Mat. Ital. \textbf{8} (2010), 131 -- 150.

\bibitem{Ba-Sm15}
N.R. Baeth and D.~Smertnig, \emph{Factorization theory: {F}rom commutative to
  noncommutative settings}, J. Algebra \textbf{441} (2015), 475 –-- 551.

\bibitem{B-C-C-K-W06}
P.~Baginski, S.T. Chapman, C.~Crutchfield, K.G. Kennedy, and M.~Wright,
  \emph{Elastic properties and prime elements}, Result. Math. \textbf{49}
  (2006), 187 -- 200.

\bibitem{B-C-H-M06}
P.~Baginski, S.T. Chapman, M.~Holden, and T.~Moore, \emph{Asymptotic elasticity
  in atomic monoids}, Semigroup Forum \textbf{72} (2006), 134 -- 142.

\bibitem{Ba-Ne-Pe17a}
T.~Barron, C.O'Neill, and R.~Pelayo, \emph{On the set of elasticities in
  numerical monoids}, Semigroup Forum \textbf{94} (2017), 37�50.

\bibitem{B-C-K-R06}
C.~Bowles, S.T. Chapman, N.~Kaplan, and D.~Reiser, \emph{On delta sets of
  numerical monoids}, J. Algebra Appl. \textbf{5} (2006), 695 -- 718.

\bibitem{Ch-Sm13a}
Gyu~Whan Chang and D.~Smertnig, \emph{Factorization in the self-idealization of
  a {PID}}, Boll. Unione Mat. Ital. \textbf{IX,6(2)} (2013), 363 -- 377.

\bibitem{C-F-G-O16}
S.T. Chapman, M.~Fontana, A.~Geroldinger, and B.~Olberding (eds.),
  \emph{Multiplicative {I}deal {T}heory and {F}actorization {T}heory},
  Proceedings in Mathematics and Statistics, vol. 170, Springer, 2016.

\bibitem{Ch-Go-Pe14a}
S.T. Chapman, F.~Gotti, and R.~Pelayo, \emph{On delta sets and their realizable
  subsets in {K}rull monoids with cyclic class groups}, Colloq. Math.
  \textbf{137} (2014), 137 -- 146.

\bibitem{Ch-Ho-Mo06}
S.T. Chapman, M.~Holden, and T.~Moore, \emph{Full elasticity in atomic monoids
  and integral domains}, Rocky Mt. J. Math. \textbf{36} (2006), 1437 -- 1455.

\bibitem{Co-Ka16a}
S.~Colton and N.~Kaplan, \emph{The realization problem for delta sets of
  numerical monoids}, J. Commut. Algebra, to appear.

\bibitem{Fr13a}
S.~Frisch, \emph{A construction of integer-valued polynomials with prescribed
  sets of lengths of factorizations}, Monatsh. Math. \textbf{171} (2013), 341
  -- 350.

\bibitem{Ge09a}
A.~Geroldinger, \emph{Additive group theory and non-unique factorizations},
  Combinatorial {N}umber {T}heory and {A}dditive {G}roup {T}heory
  (A.~Geroldinger and I.~Ruzsa, eds.), Advanced Courses in Mathematics CRM
  Barcelona, Birkh{\"a}user, 2009, pp.~1 -- 86.

\bibitem{Ge16c}
\bysame, \emph{Sets of lengths}, Amer. Math. Monthly \textbf{123} (2016), 960
  -- 988.

\bibitem{Ge-Gr-Sc-Sc10}
A.~Geroldinger, D.J. Grynkiewicz, G.J. Schaeffer, and W.A. Schmid, \emph{On the
  arithmetic of {K}rull monoids with infinite cyclic class group}, J. Pure
  Appl. Algebra \textbf{214} (2010), 2219 -- 2250.

\bibitem{Ge-Gr-Yu15}
A.~Geroldinger, D.J. Grynkiewicz, and P.~Yuan, \emph{On products of $k$ atoms
  {II}}, Mosc. J. Comb. Number Theory \textbf{5} (2015), 73 -- 129.

\bibitem{Ge-HK06a}
A.~Geroldinger and F.~Halter-Koch, \emph{Non-{U}nique {F}actorizations.
  {A}lgebraic, {C}ombinatorial and {A}nalytic {T}heory}, Pure and Applied
  Mathematics, vol. 278, Chapman \& Hall/CRC, 2006.

\bibitem{Ge-Ha-Le07}
A.~Geroldinger, W.~Hassler, and G.~Lettl, \emph{On the arithmetic of strongly
  primary monoids}, Semigroup Forum \textbf{75} (2007), 567 -- 587.

\bibitem{Ge-Ka-Re15a}
A.~Geroldinger, F.~Kainrath, and A.~Reinhart, \emph{Arithmetic of seminormal
  weakly {K}rull monoids and domains}, J. Algebra \textbf{444} (2015), 201 --
  245.

\bibitem{Ge-Ro18a}
A.~Geroldinger and M.~Roitman,
\emph{The arithmetic of one-dimensional local domains},
manuscript.



\bibitem{Ge-Sc16a}
A.~Geroldinger and W.~A. Schmid, \emph{A characterization of class groups via
  sets of lengths}, {http://arxiv.org/abs/1503.04679}.

\bibitem{Ge-Sc16b}
\bysame, \emph{The system of sets of lengths in {K}rull monoids under set
  addition}, Rev. Mat. Iberoam. \textbf{32} (2016), 571 -- 588.

\bibitem{Ge-Yu12b}
A.~Geroldinger and P.~Yuan, \emph{The set of distances in {K}rull monoids},
  Bull. Lond. Math. Soc. \textbf{44} (2012), 1203 –-- 1208.

\bibitem{Ge-Zh16b}
A.~Geroldinger and Q.~Zhong, \emph{A characterization of class groups via sets
  of lengths {II}}, J. Th{\'e}or. Nombres Bordx. \textbf{29} (2017).

\bibitem{Ge-Zh18a}
\bysame, \emph{Long sets of lengths with maximal elasticity}, manuscript.

\bibitem{Ge-Zh16a}
\bysame, \emph{The set of minimal distances in {K}rull monoids}, Acta Arith.
  \textbf{173} (2016), 97 -- 120.

\bibitem{HK98}
F.~Halter-Koch, \emph{Ideal {S}ystems. {A}n {I}ntroduction to {M}ultiplicative
  {I}deal {T}heory}, Marcel Dekker, 1998.

\bibitem{Ka99a}
F.~Kainrath, \emph{Factorization in {K}rull monoids with infinite class group},
  Colloq. Math. \textbf{80} (1999), 23 -- 30.

\bibitem{Mc-Ro01a}
J.C. McConnell and J.C. Robson, \emph{Noncommutative {N}oetherian {R}ings},
  Graduate {S}tudies in {M}athematics, vol.~30, American {M}athematical
  {S}ociety, 2001.

\bibitem{Sc09a}
W.A. Schmid, \emph{A realization theorem for sets of lengths}, J. Number Theory
  \textbf{129} (2009), 990 -- 999.

\bibitem{Sm17a}
D.~Smertnig, \emph{{Factorizations in bounded hereditary noetherian prime
  rings}}, {http://arxiv.org/abs/1605.09274}.

\bibitem{Sm13a}
\bysame, \emph{Sets of lengths in maximal orders in central simple algebras},
  J. Algebra \textbf{390} (2013), 1 -- 43.

\bibitem{Zh17a}
Q. Zhong, \emph{Sets of minimal distances and characterizations of class
  groups of {K}rull monoids}, Ramanujan J. (2017), doi:10.1007/s11139-016-9873-2.

\end{thebibliography}
\end{document}